\documentclass[10pt]{article}
\usepackage[utf8]{inputenc}
\usepackage[T1]{fontenc}
\usepackage{amsmath,amssymb,amsthm}
\usepackage{geometry}
\usepackage{graphicx}
\usepackage{booktabs}
\usepackage{placeins}
\usepackage{url}
\usepackage{cite}
\usepackage{color}

\newtheorem{theorem}{Theorem}[section]
\newtheorem{proposition}[theorem]{Proposition}
\newtheorem{lemma}[theorem]{Lemma}
\newtheorem{corollary}[theorem]{Corollary}
\newtheorem{definition}[theorem]{Definition}
\theoremstyle{remark}
\newtheorem{remark}[theorem]{Remark}

\title{Consistency and Convergence of the Backward-Euler Scheme for Stochastic Functional Differential Equations Driven by Fractional Brownian Motion}

\author{
Alexander Abreu$^{1}$\thanks{Alexander Abreu acknowledges financial support through a doctoral fellowship
in Mathematics at PUCV, Chile. He also gratefully acknowledges the hospitality
of Ernesto Mordecki during his visit to Uruguay.}
\and
Lisandro Fermín$^{2}$\thanks{Lisandro Fermín acknowledges financial support from Aix-Marseille University.
The project leading to this publication has received funding from the French government under the “France 2030” investment plan managed by the French National Research Agency (reference :ANR-17-EURE-0020) and from Excellence Initiative of Aix-Marseille University - A*MIDEX.} \thanks{Corresponding author.}
\and
Ernesto Mordecki$^{3}$\thanks{Ernesto Mordecki was partially supported by MathAmSud EXPLORE-SDE
AMSUD240037}
\and
Soledad Torres$^{1}$\thanks{Soledad Torres was partially supported by FONDECYT Regular Projects
Nos.~1230807 and 1221373, MathAmSud EXPLORE-SDE AMSUD240037, and
MathAmSud SiJaVol AMSUD240024. This work was also supported by the
Centro de Modelamiento Matemático (CMM), through the BASAL fund
FB210005 from ANID-Chile.}
}

\date{}

\begin{document}
\maketitle

\begin{center}
\begin{minipage}{0.9\textwidth}
\small
\begin{itemize}
\item[$^{1}$] CIMFAV - Ingemat, Facultad de Ingeniería, Universidad de Valparaíso, Chile. \\
\texttt{pedro.abreu@uv.cl}\\
\texttt{soledad.torres@uv.cl}
\item[$^{2}$] \texttt{Aix-Marseille Univ, CNRS, AMSE, France.} \\
\texttt{lisandro.fermin@uni-amu.fr}
\item[$^{3}$] Centro de Matemática, Facultad de Ciencias, Universidad de la República, Montevideo, Uruguay. \\
\texttt{mordecki@cmat.edu.uy}
\end{itemize}
\end{minipage}
\end{center}

\begin{abstract}
We study the backward-Euler scheme for a class of stochastic functional
differential equations (SFDEs) with memory driven by a fractional
Brownian motion with Hurst parameter $H>1/2$. We first establish the local
consistency of the method, with a local truncation error of order
$H-\rho-\beta+1$, and then, as the main result, prove the uniform global
pathwise convergence of the numerical approximation, on a set of
probability one, with order $H-\rho-\beta$, for
$\beta\in(1-H,1/2)$ and $0<\rho<H-\beta$. A uniform
high-probability formulation with deterministic constants follows as a
corollary. In particular, the convergence order can be taken arbitrarily
close to $2H-1$, matching the order obtained for the explicit Euler scheme. The convergence proof
relies on an exact integral representation of the scheme at mesh
points, an a priori bound for the numerical solution, fractional
calculus estimates, and a Gronwall inequality for weakly singular
kernels.
\end{abstract}

\noindent\textbf{Keywords:}
fractional Brownian motion; stochastic functional differential
equations; distributed memory; backward-Euler scheme; pathwise
convergence; fractional calculus.

\medskip
\noindent\textbf{AMS Subject Classification:} 65C30, 34K50, 60G22, 60H35.

\section{Introduction}
\label{sec:introduccion}
 
Stochastic functional differential equations (SFDEs) driven by
fractional Brownian motion (fBm) provide a natural framework for
modelling systems whose evolution depends on their own past through a
memory term, while the driving noise exhibits long-range dependence or
anti-persistence, according to the Hurst parameter $H$. When
$H>1/2$, the fBm has Hölder-continuous paths of order strictly larger
than $1/2$, which allows the stochastic integral to be defined
pathwise, in the generalized Riemann-Stieltjes sense of
Zähle~\cite{zahle1998}, without resorting to Itô calculus. This
pathwise viewpoint, developed in the work of
Nualart and Rășcanu~\cite{nualart2002} on differential equations driven
by Hölder-continuous signals, is the technical backbone of the entire
analysis developed below.
 
Existence and uniqueness theory for SDEs driven by fBm with $H>1/2$ is
by now well established, and it has been extended in several
directions relevant to the present setting. For delay and functional
equations, Ferrante and Rovira~\cite{ferrante2006} proved existence and
uniqueness for scalar delay equations driven by fBm and studied the
small-delay limit, while Boufoussi and Hajji~\cite{boufoussi2011}
established existence and uniqueness for a broader class of SFDEs with
a general memory functional, in a framework close to the one adopted
here. On the numerical side, Kloeden and
Neuenkirch~\cite{kloeden2007pathwise} obtained general pathwise
convergence results for approximation schemes of SDEs driven by
Hölder-continuous noise, including fBm, and Garrido-Atienza, Kloeden
and Neuenkirch~\cite{garrido2009} used discretization schemes of this
type to approximate stationary solutions of fBm-driven systems.
Neuenkirch and Nourdin~\cite{neuenkirch2007} identified the exact rate
of convergence of the explicit Euler scheme for SDEs driven by fBm with
$H>1/2$ in the memoryless case, showing it to be $2H-1$, a result
complemented by Mishura and Shevchenko~\cite{mishura2008}; higher-order
schemes, such as the Milstein-type scheme of Deya, Neuenkirch and
Tindel~\cite{deya2012}, have also been developed, although at the cost
of more restrictive regularity assumptions on the coefficients.
Implicit discretizations for fractional-noise equations have also been
studied in important settings without the distributed-memory structure
considered here. Kamrani and Jamshidi~\cite{kamrani2017} analysed an
implicit Euler approximation for stochastic evolution equations driven
by fBm. Zhang and Yuan~\cite{zhangyuan2021} developed an implicit Euler
scheme for one-dimensional fBm-driven SDEs with locally Lipschitz drift,
and Zhou, Hu and Liu~\cite{zhouhu2023} treated the backward Euler method
under a one-sided Lipschitz condition. These results show that
implicitness itself is not new in the fractional-noise literature; the
issue addressed here is its interaction with a distributed functional
memory term and with the pathwise fractional-calculus estimates needed
to control that term.

A recent contribution by Garzón, León, Lozada and
Torres~\cite{garzon2026euler} studies an Euler approximation for
stochastic functional differential equations driven by fractional
Brownian motion in a functional Hölder-space setting. In their
formulation, the coefficients act on past trajectory segments belonging
to a space of \(\lambda\)-Hölder continuous functions, with
\(\lambda\in(1/2,H)\), the stochastic integral is interpreted as a Young
integral, and rough-path techniques are used to establish a convergence
rate \(n^{-\gamma}\) for every \(\gamma<2\lambda-1\). The class studied
in the present paper is organized around the explicit distributed-memory
functional
$
Y(t)=\int_{-r}^{0}K(t,s,X(t+s))\,ds,
$
which is discretized together with the state equation. The numerical
analysis therefore involves, in addition to the state approximation,
the quadrature error of the distributed-memory term and its fractional
increments. In the backward-Euler scheme considered here, this
distributed-memory approximation is coupled with the implicit
right-endpoint evaluation of the drift, while the stochastic integral
is treated pathwise through generalized Lebesgue-Stieltjes fractional calculus.

In a previous work~\cite{abreu2026euler} we studied the convergence of
the explicit Euler scheme for the class of SFDEs with memory driven by
fBm with $H>1/2$ considered here. That analysis rested on three
pillars: the Hölder regularity of the exact solution, a careful
discretization of the memory functional, and the pathwise
interpretation of the stochastic integral in the sense of Zähle. These
results provide the technical foundation for the study of the implicit
scheme addressed in the present paper.
 
Implicit time-stepping methods are a well-established tool in the
numerical analysis of both deterministic and stochastic differential
equations - see, e.g., the classical treatment of implicit and
semi-implicit schemes in Kloeden and
Platen~\cite{kloedenplaten1992} - and are especially relevant for
stiff or dissipative drifts. In the delay literature driven by standard
Brownian motion, Buckwar~\cite{buckwar2004} analysed the
$\theta$-Maruyama family for SFDEs with a distributed memory term, so
that the backward choice already has a natural functional analogue in
the Itô setting. Combining this distributed-memory structure with fBm
is considerably more delicate: the non-semimartingale noise requires a
pathwise fractional-calculus treatment, the memory quadrature introduces
increments over the whole delay window, and the right-endpoint
evaluation of the drift creates an implicit step. The contribution of
the present paper is to establish a complete consistency and global
convergence analysis for this combination of features, with a
state- and memory-dependent diffusion coefficient and an explicit
control of the right-endpoint projection $C_h^+$.
 
The backward-Euler discretization retains the left-point approximation
of the diffusion coefficient and the same distributed-memory
quadrature as the explicit scheme of~\cite{abreu2026euler}, while
evaluating the drift at the right endpoint of each discretization
subinterval. This right-endpoint evaluation changes the recursive
structure of the numerical method and propagates through the
well-posedness, a priori, and global error analyses. Since the drift at
time \(t_{n+1}\) depends on the yet-unknown value
\(\widetilde X_h(t_{n+1})\), each numerical step requires an implicit
solve whose existence and uniqueness must first be established.
Moreover, the shift from the left-point projection \(C_h\) to the
right-point projection \(C_h^+\) introduces additional terms in the
drift estimates that must be controlled throughout the fractional
analysis. This distinction allows us to isolate the analytical
mechanisms that are specific to the backward discretization and to
derive estimates that close simultaneously for the state, the
distributed-memory functional, and their fractional increments.

The relevance of this analysis is twofold. On the theoretical side, it
extends the pathwise numerical analysis developed for the explicit
Euler scheme in~\cite{abreu2026euler} to a backward discretization of
fBm-driven SFDEs with distributed memory. The analysis combines the
well-posedness of the implicit recursion with uniform pathwise estimates
and a coupled control of the state error, its fractional increments,
and the error generated by the discretization of the memory functional. On the
modelling side, systems with mean-reverting or dissipative drift and
delayed feedback are natural candidates for such a discretization. The
present article establishes well-posedness of each implicit step and
convergence under the standing Lipschitz assumptions; stronger
stability results under monotonicity or one-sided Lipschitz conditions
constitute a separate extension and are not required for the
convergence theorem proved here.
 
It is worth emphasizing precisely what changes, and what does not,
relative to the explicit Euler scheme of \cite{abreu2026euler}. The two
schemes share the same discretization of the diffusion coefficient (at
the left endpoint $t_n$) and the same discretization of the memory
functional; consequently, every estimate that involves only the
diffusion transfers essentially unchanged, and, as shown in
Corollary~\ref{cor:2H-1}, both schemes attain the same convergence rate,
arbitrarily close to $2H-1$. The two schemes differ only in the
evaluation point of the drift coefficient - $t_n$ for the explicit
scheme, $t_{n+1}$ for the backward-Euler scheme - but this single
change has three consequences that are not present in the explicit
case and that account for most of the technical work in this paper: (i)
each step is now defined only implicitly, through a fixed-point
equation, so existence and uniqueness of $\widetilde X_h(t_{n+1})$ must
be established first (Proposition~\ref{prop:existencia-implicito}); (ii)
the drift term in the integral representation of the scheme, and in
every estimate that involves it, must be re-derived with the projection
operator $C_h^+$ in place of $C_h$; and (iii) the well-posedness
condition $hL_2<1$, absent in the explicit scheme, introduces an
additional step-size restriction associated specifically with the
implicit solve; the a priori argument also requires the usual smallness
conditions needed to absorb mesh-dependent remainder terms.
Remark~\ref{rem:explicit-backward} in Section~\ref{sec:4} makes this
comparison precise at the level of the integral representation of the
scheme.
 
The main contributions of this paper are the following. First, we
establish the local consistency of the backward-Euler scheme, obtaining
a local truncation error of order $O(h^{H-\rho-\beta+1})$
(Theorem~\ref{thm:4.1.2}). Second, we derive an exact integral
representation of the scheme at the mesh points together with uniform a
priori estimates for its auxiliary interpolation
(Proposition~\ref{prop:existencia-implicito} and
Theorem~\ref{thm:apriori-backward}). These estimates allow us, third, to
prove the uniform global convergence of the method with rate
$H-\rho-\beta$, for $\beta\in(1-H,1/2)$ and $0<\rho<H-\beta$
(Theorem~\ref{thm:convergencia-backward}). 
As a consequence, the convergence order can be chosen arbitrarily close
to the classical \(2H-1\) Euler rate established for memoryless
fractional Brownian motion driven SDEs~\cite{neuenkirch2007}
(Corollary~\ref{cor:2H-1}).
Throughout the proofs we systematically separate the
arguments that coincide with the analysis of the explicit Euler scheme
from those that depend specifically on the implicit evaluation of the
drift, so that the reader familiar with \cite{abreu2026euler} can follow
precisely where, and why, the two analyses diverge. A numerical study
complements the theoretical results and illustrates the practical
behaviour of both schemes on a concrete example.
 
The paper is organized as follows. Section~\ref{sec:2} presents the
stochastic functional differential equation, the standing hypotheses,
and the existence, uniqueness and regularity results used throughout
the analysis. Section~\ref{sec:3} introduces the backward-Euler scheme,
together with the mesh projection operators and the auxiliary
interpolation used in the estimates. Section~\ref{sec:4} contains the
consistency analysis, the a priori estimates, and the proof of
convergence. Finally, Section~\ref{sec:5} presents the numerical study.

%and Section~\ref{sec:conclusiones} collects the conclusions.

\section{The Stochastic Functional Differential Equation}
\label{sec:2}

We consider the stochastic functional differential equation
\begin{equation}\label{eq:X}
X(t) = \begin{cases}
X(0)+\displaystyle\int_0^t b(s,X(s),Y(s))\,ds+\int_0^t
\sigma(s,X(s),Y(s))\,dB^H(s), & t\in[0,T]\\[4pt]
\phi(t), & t\in[-r,0],
\end{cases}
\end{equation}
where $B^H=(B^H(t))_{t\ge0}$ is a fractional Brownian motion with Hurst
parameter $H>1/2$. The stochastic integral with respect to $B^H$ is
understood in the sense of the generalized Lebesgue-Stieltjes integral
introduced by Zähle~\cite{zahle1998}. The drift and diffusion
coefficients $b,\sigma:[0,T]\times\mathbb R\times\mathbb R\to\mathbb R$
are assumed to be continuous functions, and the deterministic initial
condition $\phi:[-r,0]\to\mathbb R$ is assumed to be Hölder continuous.
The memory effect is modelled through the functional $Y(t)$ defined by
\begin{equation}\label{eq:Y}
Y(t) = \int_{-r}^0 K(t,s,X(t+s))\,ds = \int_{t-r}^t K(t,s-t,X(s))\,ds,
\qquad t\ge0,
\end{equation}
where the kernel $K:[0,T]\times[-r,0]\times\mathbb R\to\mathbb R$ is a
continuous function.

We consider the following hypotheses on the coefficients $b,\sigma$,
the initial condition $\phi$, and the kernel $K$. We denote by
\begin{equation}\label{Cr}
C_r=C([-r,0];\mathbb R^2) \,\, \text{the space of continuous functions} \,\,
\xi=(\xi_1,\xi_2):[-r,0]\to\mathbb R^2, 
\end{equation}
endowed with the norm  
\(
\|\xi\|=\sup_{s\in[-r,0]}
\bigl(|\xi_1(s)|+|\xi_2(s)|\bigr).
\)

\medskip
\noindent\textbf{Hypothesis $\phi$.} The initial condition
$\phi:[-r,0]\to\mathbb R$ is deterministic and, for every $0<\rho<H$,
there exists a positive constant $L_\rho$ such that
\begin{equation}\label{Hip:phi}
|\phi(t)-\phi(s)| \le L_\rho|t-s|^{H-\rho},
\qquad s,t\in[-r,0].
\end{equation}

\noindent\textbf{Hypothesis 1.} There exist positive constants
$L_1,L_2,L_3$ such that the following properties hold.
\begin{align}\label{Hip:1}
|b(t,x,y)-b(s,x,y)|+|\sigma(t,x,y)-\sigma(s,x,y)| &\le L_1|t-s|. \nonumber \\
|b(t,x_1,y_1)-b(t,x_2,y_2)|+|\sigma(t,x_1,y_1)-\sigma(t,x_2,y_2)| &\le
L_2\big(|x_1-x_2|+|y_1-y_2|\big). \nonumber \\
|b(t,x,y)|+|\sigma(t,x,y)| &\le L_3\big(1+|x|+|y|\big).
\end{align}

\noindent\textbf{Hypothesis 2.} The function $\sigma(t,\xi)$ is
Fréchet-differentiable in the variable $\xi$. Moreover, there exist
positive constants $M_1,M_2,M_3$ such that, for all $\xi,\eta\in C_r$, defined in \eqref{Cr}
and $s,t\in[0,T]$,
\begin{align}\label{Hip:2}
|D_\xi\sigma(t,\xi)|_{L(C_r,\mathbb R)} &\le M_1. \notag\\
|D_\xi\sigma(t,\xi)-D_\xi\sigma(t,\eta)|_{L(C_r,\mathbb R)} &\le M_2\|\xi-\eta\|. \notag\\
|D_\xi\sigma(t,\xi)-D_\xi\sigma(s,\xi)|_{L(C_r,\mathbb R)} &\le M_3|t-s|.
\end{align}

Hypothesis~2 is kept in functional form because it is part of the
well-posedness framework for the SFDE. The numerical estimates use its
restriction to the two state variables appearing in \eqref{eq:X}. The
next lemma records explicitly that the regularity properties required
there are consequences of the functional hypothesis.

\begin{lemma}
\label{lem:sigma-restriction}
Let $J:\mathbb R^2\to C_r$ be the constant-path embedding
\[
(Jz)(\theta)=z,\qquad \theta\in[-r,0],\qquad z=(x,y),
\]
and endow $\mathbb R^2$ with $|z|_1=|x|+|y|$. Then $\|Jz\|=|z|_1$.
Define the pointwise coefficient used in \eqref{eq:X} by
\[
\sigma(t,z):=\sigma(t,Jz).
\]
Under Hypothesis~2, (\ref{Hip:2}) $z\mapsto\sigma(t,z)$ is differentiable and
\begin{align}
\label{eq:sigma-restriction-bounds}
\|D_z\sigma(t,z)\| &\le M_1,\notag\\
\|D_z\sigma(t,z)-D_z\sigma(t,z')\| &\le M_2|z-z'|_1,\notag\\
\|D_z\sigma(t,z)-D_z\sigma(s,z)\| &\le M_3|t-s|.
\end{align}
Thus every derivative estimate for $\sigma(t,x,y)$ used below follows
from Hypothesis~2 and is not an additional assumption on the numerical
scheme.
\end{lemma}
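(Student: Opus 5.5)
The plan is to use the chain rule for Fréchet derivatives, since $\sigma(t,\cdot)=\sigma(t,\cdot)\circ J$ on $\mathbb R^2$ and $J$ is bounded linear.

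First, I will verify that $\|Jz\|=|z|_1$. For $z=(x,y)$, the path $Jz$ is constant, so
\[
\|Jz\|=\sup_{\theta\in[-r,0]}\bigl(|x|+|y|\bigr)=|x|+|y|=|z|_1.
\]
Thus $J$ is a linear isometry from $(\mathbb R^2,|\cdot|_1)$ into $C_r$. In particular it is bounded with operator norm $1$, and it is Fréchet differentiable with $D_zJ=J$.

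Next, I apply the chain rule. Since $\xi\mapsto\sigma(t,\xi)$ is Fréchet differentiable on $C_r$ by Hypothesis~2, the composition $z\mapsto\sigma(t,Jz)$ is differentiable, and
\[
D_z\sigma(t,z)=D_\xi\sigma(t,Jz)\circ J\in L(\mathbb R^2,\mathbb R).
\]
The first bound in \eqref{eq:sigma-restriction-bounds} then follows directly:
\[
\|D_z\sigma(t,z)\|\le|D_\xi\sigma(t,Jz)|_{L(C_r,\mathbb R)}\,\|J\|\le M_1.
\]

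The remaining two bounds come from linearity of composition with $J$. For the second,
\[
\|D_z\sigma(t,z)-D_z\sigma(t,z')\|=\bigl\|(D_\xi\sigma(t,Jz)-D_\xi\sigma(t,Jz'))\circ J\bigr\|\le M_2\|Jz-Jz'\|.
\]
Because $J$ is linear, $Jz-Jz'=J(z-z')$, so the right-hand side equals $M_2|z-z'|_1$ by the isometry. For the third,
\[
\|D_z\sigma(t,z)-D_z\sigma(s,z)\|\le|D_\xi\sigma(t,Jz)-D_\xi\sigma(s,Jz)|_{L(C_r,\mathbb R)}\le M_3|t-s|.
\]

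There is no real obstacle here. The one point that needs care is that the operator norm on $L(\mathbb R^2,\mathbb R)$ must be taken with respect to $|\cdot|_1$, so that $\|J\|=1$. With any other norm on $\mathbb R^2$, the constants would pick up a norm-equivalence factor.
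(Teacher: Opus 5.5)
Your proposal is correct and follows essentially the same route as the paper: the chain rule gives $D_z\sigma(t,z)=D_\xi\sigma(t,Jz)\circ J$, and because $J$ is a linear isometry with $\|J\|=1$, each of the three bounds in Hypothesis~2 transfers directly. Your remark that the operator norm on $L(\mathbb R^2,\mathbb R)$ must be taken with respect to $|\cdot|_1$ is exactly what makes $M_1,M_2,M_3$ carry over without an extra norm-equivalence factor.
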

\begin{proof}
The map $J$ is linear and isometric from $(\mathbb R^2,|\cdot|_1)$ into
$C_r$. By the chain rule,
\[
D_z\sigma(t,z)=D_\xi\sigma(t,Jz)\circ J.
\]
Since $\|J\|=1$, the first estimate in
\eqref{eq:sigma-restriction-bounds} follows from the first line of
\eqref{Hip:2}. For $z,z'\in\mathbb R^2$,
\begin{align*}
\|D_z\sigma(t,z)-D_z\sigma(t,z')\|
&\le \|D_\xi\sigma(t,Jz)-D_\xi\sigma(t,Jz')\|\,\|J\|\\
&\le M_2\|Jz-Jz'\|=M_2|z-z'|_1,
\end{align*}
and the temporal estimate is obtained in the same way from the third
line of \eqref{Hip:2}.
\end{proof}

\begin{lemma}
\label{lem:sigma-second-increment}
Under Hypothesis~2, (\ref{Hip:2}) for $t_1,t_2\in[0,T]$ and
$z_i=(x_i,y_i)\in\mathbb R^2$, $i=1,\ldots,4$, we have
\begin{align*}
&|\sigma(t_1,z_1)-\sigma(t_1,z_2)
-\sigma(t_2,z_3)+\sigma(t_2,z_4)|\\
&\le M_1|z_1-z_2-z_3+z_4|_1\\
&\quad+\Bigl[M_3|t_1-t_2|
+M_2\bigl(|z_2-z_4|_1+|z_1-z_2-z_3+z_4|_1\bigr)\Bigr]
|z_1-z_2|_1.
\end{align*}
\end{lemma}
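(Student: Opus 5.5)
We reduce everything to the pointwise coefficient $\sigma(t,z)=\sigma(t,Jz)$ of Lemma~\ref{lem:sigma-restriction}, which is differentiable in $z$ with the bounds \eqref{eq:sigma-restriction-bounds}, and use the mean value theorem in integral form along segments. Write
\[
\sigma(t_1,z_1)-\sigma(t_1,z_2)=\int_0^1 D_z\sigma\bigl(t_1,z_2+\theta(z_1-z_2)\bigr)(z_1-z_2)\,d\theta,
\]
\[
\sigma(t_2,z_3)-\sigma(t_2,z_4)=\int_0^1 D_z\sigma\bigl(t_2,z_4+\theta(z_3-z_4)\bigr)(z_3-z_4)\,d\theta .
\]
Subtracting, and adding and subtracting $D_z\sigma(t_2,z_4+\theta(z_3-z_4))(z_1-z_2)$ inside the integral, the quantity to estimate splits as $I_1+I_2$, where
\[
I_1=\int_0^1 D_z\sigma\bigl(t_2,z_4+\theta(z_3-z_4)\bigr)\bigl(z_1-z_2-z_3+z_4\bigr)\,d\theta,
\]
\[
I_2=\int_0^1\Bigl[D_z\sigma\bigl(t_1,z_2+\theta(z_1-z_2)\bigr)-D_z\sigma\bigl(t_2,z_4+\theta(z_3-z_4)\bigr)\Bigr](z_1-z_2)\,d\theta .
\]

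The term $I_1$ is bounded directly by the first line of \eqref{eq:sigma-restriction-bounds}, giving $M_1|z_1-z_2-z_3+z_4|_1$; the operator norm is taken with respect to $|\cdot|_1$ on $\mathbb R^2$, consistently with $\|J\|=1$. For $I_2$, insert the intermediate point $D_z\sigma(t_2,z_2+\theta(z_1-z_2))$. The temporal part is bounded by $M_3|t_1-t_2|$ using the third line. The spatial part is bounded by $M_2$ times the $|\cdot|_1$-distance between $z_2+\theta(z_1-z_2)$ and $z_4+\theta(z_3-z_4)$, using the second line. That difference equals
\[
(1-\theta)(z_2-z_4)+\theta(z_1-z_3),
\]
and since $z_1-z_3=(z_2-z_4)+(z_1-z_2-z_3+z_4)$, it reduces to
\[
(z_2-z_4)+\theta(z_1-z_2-z_3+z_4).
\]
For $\theta\in[0,1]$ its norm is at most $|z_2-z_4|_1+|z_1-z_2-z_3+z_4|_1$. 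Multiplying by $|z_1-z_2|_1$ and integrating in $\theta$ yields exactly the bracketed term of the statement.

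The only delicate point is this algebraic rearrangement of the interpolation points, which produces precisely the combination $|z_2-z_4|_1+|z_1-z_2-z_3+z_4|_1$ rather than, say, $|z_1-z_3|_1$. The rest is routine, provided the integral mean value formula is justified. Differentiability of $z\mapsto\sigma(t,z)$ together with continuity of $D_z\sigma$ (Lipschitz, by the second bound) makes $\theta\mapsto\sigma(t,z_2+\theta(z_1-z_2))$ $C^1$, so the formula holds.
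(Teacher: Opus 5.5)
Your proposal is correct and follows the paper's proof essentially step for step: it uses the same segment representation with $a=z_1-z_2$, $b=z_3-z_4$, the same add-and-subtract of $D_z\sigma(t_2,z_4+\lambda b)a$, and the same rearrangement $z_2-z_4+\lambda(a-b)$ of the interpolation points. Your explicit intermediate point $D_z\sigma(t_2,z_2+\theta(z_1-z_2))$, which splits the temporal and spatial bounds, and your justification of the integral mean value formula simply spell out steps the paper leaves implicit.
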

\begin{proof}
Set $a=z_1-z_2$ and $b=z_3-z_4$. By the fundamental theorem of
calculus along line segments,
\begin{align*}
&\sigma(t_1,z_1)-\sigma(t_1,z_2)
-\sigma(t_2,z_3)+\sigma(t_2,z_4)\\
&=\int_0^1\{D_z\sigma(t_1,z_2+\lambda a)a
-D_z\sigma(t_2,z_4+\lambda b)b\}\,d\lambda.
\end{align*}
Add and subtract
$D_z\sigma(t_2,z_4+\lambda b)a$ inside the integral. The term containing
$a-b$ is bounded by $M_1|a-b|_1$. For the remaining term,
Lemma~\ref{lem:sigma-restriction} gives
\begin{align*}
&\|D_z\sigma(t_1,z_2+\lambda a)
-D_z\sigma(t_2,z_4+\lambda b)\|\\
&\qquad\le M_3|t_1-t_2|
+M_2|z_2-z_4+\lambda(a-b)|_1\\
&\qquad\le M_3|t_1-t_2|
+M_2\bigl(|z_2-z_4|_1+|a-b|_1\bigr).
\end{align*}
Multiplication by $|a|_1$ and integration over $\lambda\in[0,1]$
prove the result.
\end{proof}

\noindent\textbf{Hypothesis 3.} For each $t\in[0,T]$, the map
\[
(s,x)\longmapsto K(t,s,x),
\qquad (s,x)\in[-r,0]\times\mathbb R,
\]
is continuously differentiable. Moreover, there exist positive
constants $K_1,K_2,K_3,K_4$ such that
\begin{align}\label{Hip:3}
|K(t,s,x)-K(u,v,x)| &\le K_1\big(|t-u|+|s-v|\big), \notag\\
|K(t,s,x)-K(t,s,y)| &\le K_2|x-y|, \notag\\
|K(t,s,x)| &\le K_3\big(1+|x|\big), \notag\\
\big|\nabla_{(s,x)}K(t,s,x)\big| &\le K_4, \notag\\
\big|\nabla_{(s,x)}K(t,s,x)-\nabla_{(s,x)}K(u,v,y)\big|
&\le K_4\big(|t-u|+|s-v|+|x-y|\big).
\end{align}

The differential regularity of Hypothesis~3 provides the four-point
estimate required in the analysis of the increments of the memory term.

\begin{lemma}
\label{lem:K-second-increment}
Under Hypothesis~3, let $t_1,t_2\in[0,T]$ and
$(s_i,x_i)\in[-r,0]\times\mathbb R$, $i=1,\ldots,4$. Then
\begin{align*}
&|K(t_1,s_1,x_1)-K(t_1,s_2,x_2)
-K(t_2,s_3,x_3)+K(t_2,s_4,x_4)|\\
&\le C\bigl(|s_1-s_2-s_3+s_4|+|x_1-x_2-x_3+x_4|\bigr)\\
&\quad+C\Bigl(|t_1-t_2|+|s_2-s_4|+|x_2-x_4|
+|s_1-s_2-s_3+s_4|+|x_1-x_2-x_3+x_4|\Bigr)\\
&\qquad\times\Bigl(|s_1-s_2|+|x_1-x_2|
+|s_3-s_4|+|x_3-x_4|\Bigr),
\end{align*}
where $C$ depends only on $K_4$.
\end{lemma}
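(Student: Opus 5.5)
The plan is to copy the proof of Lemma~\ref{lem:sigma-second-increment}, with the pair $(s,x)$ in place of $z$ and the gradient $\nabla_{(s,x)}K$ in place of $D_z\sigma$. Write $w_i=(s_i,x_i)$, $a=w_1-w_2$ and $b=w_3-w_4$, so that $a-b=(s_1-s_2-s_3+s_4,\;x_1-x_2-x_3+x_4)$. Hypothesis~3 says that for fixed $t$ the map $w\mapsto K(t,w)$ is $C^1$ on the convex set $[-r,0]\times\mathbb R$. The segments $w_2+\lambda a$ and $w_4+\lambda b$, $\lambda\in[0,1]$, stay in that set. The fundamental theorem of calculus along these segments then gives
\begin{align*}
&K(t_1,w_1)-K(t_1,w_2)-K(t_2,w_3)+K(t_2,w_4)\\
&=\int_0^1\Bigl\{\nabla K(t_1,w_2+\lambda a)\cdot a-\nabla K(t_2,w_4+\lambda b)\cdot b\Bigr\}\,d\lambda .
\end{align*}

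Next I would add and subtract $\nabla K(t_2,w_4+\lambda b)\cdot a$ inside the integral. This splits the integrand into two pieces:
\begin{align*}
\bigl[\nabla K(t_1,w_2+\lambda a)-\nabla K(t_2,w_4+\lambda b)\bigr]\cdot a
\quad\text{and}\quad
\nabla K(t_2,w_4+\lambda b)\cdot(a-b).
\end{align*}

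For the second piece I use the gradient bound $|\nabla K|\le K_4$. The norm equivalence $|v|\le |v_1|+|v_2|$ then gives the bound $K_4(|s_1-s_2-s_3+s_4|+|x_1-x_2-x_3+x_4|)$, which is the first line of the claimed estimate.

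For the first piece I use the Lipschitz bound on $\nabla K$ from the last line of Hypothesis~3. The gradient difference is at most $K_4\bigl(|t_1-t_2|+|s_2-s_4+\lambda(a_s-b_s)|+|x_2-x_4+\lambda(a_x-b_x)|\bigr)$. Since $\lambda\le1$, this is at most $K_4$ times the bracket $(|t_1-t_2|+|s_2-s_4|+|x_2-x_4|+|a_s-b_s|+|a_x-b_x|)$ appearing in the statement. Multiplying by $|a|\le|s_1-s_2|+|x_1-x_2|$ and integrating over $\lambda$ gives the product term. The factor $|a|$ is dominated by the sum $|s_1-s_2|+|x_1-x_2|+|s_3-s_4|+|x_3-x_4|$ in the statement, so the stated bound follows with a constant $C$ depending only on $K_4$ (one may take $C=K_4$, or $\sqrt2K_4$ depending on which norm is used for the gradient).

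I do not expect a real obstacle. The only points to check are convexity of the domain in $s$, so that the segments stay in $[-r,0]\times\mathbb R$, and the fact that the Lipschitz condition on $\nabla K$ holds jointly in $(t,s,x)$. Both are given directly by Hypothesis~3.
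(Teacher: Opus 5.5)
Your proof is correct and is essentially the paper's argument: the paper also expands $K(t_1,\cdot)$ and $K(t_2,\cdot)$ along the segments from $(s_2,x_2)$ to $(s_1,x_1)$ and from $(s_4,x_4)$ to $(s_3,x_3)$. It bounds the second-increment part by $|\nabla_{(s,x)}K|\le K_4$ and the gradient-difference part by the Lipschitz condition in the last line of Hypothesis~3, exactly as in Lemma~\ref{lem:sigma-second-increment}. Your integral form with a common parameter $\lambda$ on both segments is what makes the paper's ``mean value theorem'' sketch close cleanly, since two pointwise intermediate points with different parameters would leave an extra term of size $|\lambda_1-\lambda_2|\,|a|\,|b|$ that the stated bracket does not cover.
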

\begin{proof}
The mean value theorem is applied to the map $(s,x)\mapsto K(t,s,x)$ on
the segments joining $(s_2,x_2)$ to $(s_1,x_1)$ and $(s_4,x_4)$ to
$(s_3,x_3)$. Subtracting both representations separates the second
increments of $(s,x)$ from the difference between the gradients. The
first contribution is controlled by the uniform bound on
$\nabla_{(s,x)}K$, while the second is estimated using the Lipschitz
condition of the last line of \eqref{Hip:3}. This gives the stated
bound.
\end{proof}

The following result collects the existence, uniqueness, and pathwise
regularity properties of the solution of \eqref{eq:X}. It follows from
the standard well-posedness theory for stochastic functional
differential equations driven by fractional Brownian motion with
$H>1/2$, together with the regularity assumptions imposed here on the
distributed-memory functional; see, in particular,
\cite{ferrante2006,boufoussi2011,nualart2002}. These properties were
also established for the present class of equations in
\cite{abreu2026euler}.

\begin{theorem}
\label{thm:4.1.1}
Under Hypotheses~$\phi$, 1, 2 and 3, equation \eqref{eq:X} admits a
unique solution. Moreover, for every $0<\rho<H$ there exists an almost
surely finite random variable $C(\omega)$ such that, almost surely,
\begin{equation}\label{Teo:EU}
|X(u)-X(v)| \le C(\omega)|u-v|^{H-\rho},
\qquad 0\le u,v\le T.
\end{equation}
\end{theorem}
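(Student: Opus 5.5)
The plan is to recast \eqref{eq:X} as an ordinary (non-functional) pathwise equation driven by a Hölder signal, to which the Nualart--R\u{a}\c{s}canu machinery~\cite{nualart2002} applies. Fix $\beta\in(1-H,1/2)$ and work pathwise on the full-measure event where $B^H$ is $(H-\rho)$-Hölder for every $0<\rho<H$, so that $\Lambda_\beta(B^H)<\infty$. Define the solution map on the Banach space $W_0^{\beta,\infty}(0,T)$ with norm
\[
\|f\|_{\beta,\infty}=\sup_{t\in[0,T]}\Bigl(|f(t)|+\int_0^t\frac{|f(t)-f(s)|}{(t-s)^{1+\beta}}\,ds\Bigr).
\]
For $X$ in this space, extended by $\phi$ on $[-r,0]$, the memory functional $Y$ is well defined. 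I would first show that $X\mapsto Y$ is Lipschitz in the $\beta$-norm, up to the contribution of $\phi$. Hypothesis~3 gives $|Y(t)|\le rK_3(1+\sup_{[-r,t]}|X|)$. For the increment $Y(t)-Y(s)$, use the second representation in \eqref{eq:Y} and split it into two boundary pieces of length $t-s$ plus an integral over the common window. The first line of \eqref{Hip:3} controls the time dependence of the kernel, and $K_2$ controls the state dependence. For differences $Y^1-Y^2$ corresponding to two inputs $X^1,X^2$, apply Lemma~\ref{lem:K-second-increment} so that the fractional seminorm of $Y^1-Y^2$ is bounded by $\|X^1-X^2\|_{\beta,\infty}$, times a factor growing at most linearly in the norms of $X^1,X^2$.

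The next step is to treat the pair $Z=(X,Y)$ and verify the Nualart--R\u{a}\c{s}canu hypotheses for the composed coefficients $\tilde b(t,X)=b(t,X(t),Y(t))$ and $\tilde\sigma(t,X)=\sigma(t,X(t),Y(t))$. Hypothesis~1 handles $\tilde b$. For $\tilde\sigma$, the key ingredient is the four-point estimate of Lemma~\ref{lem:sigma-second-increment}, whose derivative bounds come from Lemma~\ref{lem:sigma-restriction}. This estimate is exactly what controls
\[
\bigl|\tilde\sigma(t,X^1)-\tilde\sigma(s,X^1)-\tilde\sigma(t,X^2)+\tilde\sigma(s,X^2)\bigr|,
\]
and it is needed to show that the integral operator
\[
\mathcal L(X)(t)=\phi(0)+\int_0^t\tilde b\,ds+\int_0^t\tilde\sigma\,dB^H
\]
is locally Lipschitz in $W_0^{\beta,\infty}$. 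The pathwise bound $\bigl|\int_s^t f\,dB^H\bigr|\le C\Lambda_\beta(B^H)\int_s^t\bigl(|f(u)|(u-s)^{-\beta}+\int_s^u|f(u)-f(v)|(u-v)^{-1-\beta}dv\bigr)du$ then yields the a priori estimate $\|X\|_{\beta,\infty}\le C(\omega)$, obtained via the weakly singular Gronwall lemma. This in turn gives contraction in the weighted norm $\sup_t e^{-\lambda t}(\cdots)$ on a ball, for $\lambda$ large depending on $\Lambda_\beta(B^H)$.

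The resulting existence and uniqueness argument is the standard one: Banach fixed point for local Lipschitz maps on the invariant ball, where invariance is obtained from the a priori bound. Alternatively one can argue on short intervals and concatenate, since the constants are uniform. Uniqueness among all solutions follows from the same Gronwall-type estimate applied to the difference of two solutions.

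Hölder regularity then follows from the fractional integral bound. We have $|X(u)-X(v)|\le L_3\bigl(1+\|X\|_\infty+\|Y\|_\infty\bigr)|u-v|$ for the drift part. For the stochastic part, the bound above with $\|\tilde\sigma\|_{\beta,\infty}$ finite gives $C\Lambda_\beta(B^H)\,|u-v|^{1-\beta}$. Since $\beta$ can be chosen arbitrarily close to $1-H$, this already gives Hölder order arbitrarily close to $H$. Alternatively, one can bound the Young integral directly by the Hölder norm of $B^H$ of order $H-\rho$, which gives \eqref{Teo:EU} with $C(\omega)$ depending on $\|B^H\|_{H-\rho}$ and $\|X\|_{\beta,\infty}$. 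The step I expect to be the main obstacle is the local Lipschitz estimate for $\tilde\sigma$ in the fractional seminorm when $\sigma$ depends on the functional $Y$. There the four-point control must pass through the memory window, and the $\phi$ part on $[-r,0]$ needs Hypothesis~$\phi$ to keep the fractional increments of $Y$ finite.
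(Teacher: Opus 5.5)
Your proposal is correct and follows the same route as the paper. Both compose $b$ and $\sigma$ with the distributed-memory functional, reduce \eqref{eq:X} to the Nualart--R\u{a}\c{s}canu pathwise fixed-point framework in $W_0^{\beta,\infty}$, and read off the $H-\rho$ Hölder regularity from the fractional integral bound. The difference is that the paper only records sup-norm Lipschitz continuity and linear growth of $\mathcal Y_t$ before citing \cite{ferrante2006,boufoussi2011,nualart2002}, whereas you explicitly verify the second-increment estimate for the composed diffusion via Lemmas~\ref{lem:K-second-increment} and~\ref{lem:sigma-second-increment}; that is the regularity those results actually require of the diffusion beyond Lipschitz continuity, so your version is the more self-contained of the two.
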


\begin{proof}
For a continuous path segment \(\xi\in C([-r,0];\mathbb R)\), define
the distributed-memory functional

$$
\mathcal Y_t(\xi)
:=
\int_{-r}^{0}K(t,s,\xi(s))\,ds .
$$

Hypothesis~3 implies that this functional is Lipschitz continuous in
the path variable. Indeed,

$$
|\mathcal Y_t(\xi)-\mathcal Y_t(\eta)|
\le
K_2\int_{-r}^{0}|\xi(s)-\eta(s)|\,ds
\le
rK_2\|\xi-\eta\|_\infty ,
$$

and the growth bound on \(K\) gives the corresponding linear-growth
estimate for \(\mathcal Y_t\).

Consequently, after composing \(b\) and \(\sigma\) with the
distributed-memory functional, Hypotheses~1-3 and Hypothesis~\(\phi\)
place equation \eqref{eq:X} within the standard pathwise
well-posedness framework for functional differential equations driven
by fractional Brownian motion with \(H>1/2\); see
\cite{ferrante2006,boufoussi2011,nualart2002}. This yields existence
and uniqueness of the solution.

The pathwise fractional-integral estimates in the same framework,
together with the \(H-\rho\) Hölder regularity of the fractional
Brownian trajectories for every \(\rho>0\), give

$$
|X(u)-X(v)|
\le C(\omega)|u-v|^{H-\rho},
\qquad 0\le u,v\le T,
$$

for every \(0<\rho<H\), with \(C(\omega)<\infty\) almost surely.
This is \eqref{Teo:EU}.
\end{proof}

\begin{remark}
Since $X=\phi$ on $[-r,0]$, estimate \eqref{Teo:EU} together with
Hypothesis~$\phi$ implies that the extended path satisfies
\[
X\in C^{H-\rho}([-r,T])
\qquad\text{a.s.}
\]
Indeed, it only remains to consider $s\le0\le t$, in which case
\[
|X(t)-X(s)|
\le |X(t)-X(0)|+|\phi(0)-\phi(s)|
\le C(\omega)|t-s|^{H-\rho}.
\]
\end{remark}

\section{Backward-Euler Scheme for the SFDE}
\label{sec:3}

In this section we introduce the backward-Euler approximation of the
SFDE and describe how the fractional noise and the memory term are
discretized.

To simplify the notation, throughout the numerical analysis we assume
$T=r$. Given $N\in\mathbb N$, we set
\[
h:=\frac{T}{N}=\frac{r}{N},
\qquad
t_n=nh, \qquad n=-N,\dots,N.
\]
For the numerical estimates that follow, we fix
\(
\beta\in(1-H,1/2), \,\, 0<\rho<H-\beta,
\)
so that $H-\rho>\beta$ and, in particular, $H-\rho-\beta>0$. This
additional restriction on $\rho$ is used only in the numerical
analysis; Hypothesis~$\phi$ and Theorem~\ref{thm:4.1.1} retain their
original formulation for $0<\rho<H$.
We introduce the mesh projection operators. For $u\in[-r,T]$, we define
\begin{equation}\label{Chu}
C_h(u) = \Big\lfloor\frac{u}{h}\Big\rfloor h,
\end{equation}
and the projection operator onto the next mesh point by
\begin{equation}\label{Chu+}
C_h^+(u):=
\begin{cases}
C_h(u)+h, & u\in[-r,T),\\
T, & u=T.
\end{cases}
\end{equation}
Thus, if $u\in[t_i,t_{i+1})$, then $C_h(u)=t_i$ and $C_h^+(u)=t_{i+1}$.
The convention $C_h^+(T)=T$ avoids artificially introducing the point
$T+h$. The operator $C_h$ selects the left endpoint of the mesh
interval containing $u$, whereas $C_h^+$ selects its right endpoint.
This distinction will allow us to express, in a unified way, the
explicit evaluation of the diffusion and the implicit evaluation of the
drift.

The discrete delay functional is given by
\begin{equation}\label{Ytilde}
\widetilde Y(t) = \sum_{i=-N}^{-1} h\,K\big(t,t_i,\widetilde
X_h(t+t_i)\big).
\end{equation}
Equivalently,
\[
\widetilde Y(t) = \sum_{i=-N}^{-1}\int_{t_i}^{t_{i+1}}
K\big(t,C_h(s),\widetilde X_h(t+C_h(s))\big)\,ds
= \int_{-r}^0 K\big(t,C_h(s),\widetilde X_h(t+C_h(s))\big)\,ds.
\]
This representation makes explicit the link between the exact delay
term and its discrete counterpart. In particular, when $t=t_{n+1}$, the
largest argument of $\widetilde X_h$ appearing in \eqref{Ytilde} is
$t_n$; hence $\widetilde Y(t_{n+1})$ depends only on already computed
nodal values.

We fix the discrete initial condition by
\[
\widetilde X_h(t_n)=\phi(t_n),\qquad n=-N,\dots,0.
\]
For the estimates on continuous intervals we further extend the
approximation over the initial interval by
\[
\widetilde X_h(t)=\phi(t),\qquad t\in[-r,0].
\]

The backward-Euler scheme associated with \eqref{eq:X} is defined
recursively, for $n=0,\dots,N-1$, by
\begin{equation}\label{Backward}
\widetilde X_h(t_{n+1}) = \widetilde X_h(t_n) + h\,b\big(t_{n+1},
\widetilde X_h(t_{n+1}),\widetilde Y(t_{n+1})\big) +
\sigma\big(t_n,\widetilde X_h(t_n),\widetilde Y(t_n)\big)\,\Delta B^H(t_n),
\end{equation}
where
\[
\Delta B^H(t_n) = B^H(t_{n+1})-B^H(t_n).
\]
The implicitness of the method appears only in the drift variable
$\widetilde X_h(t_{n+1})$. Indeed, $\widetilde Y(t_{n+1})$ is determined
by previous values of the scheme, while the diffusion is evaluated at
the left endpoint $t_n$.

For the analytical estimates we introduce the continuous extension, for
$t\in[t_n,t_{n+1})$,
\[
\widetilde X_h(t) = \widetilde X_h(t_n) + b\big(t_{n+1},\widetilde
X_h(t_{n+1}),\widetilde Y(t_{n+1})\big)(t-t_n) +
\sigma\big(t_n,\widetilde X_h(t_n),\widetilde Y(t_n)\big)
\big(B^H(t)-B^H(t_n)\big).
\]
The nodal values $\widetilde X_h(t_n)$ constitute the numerical scheme.
For $t\in(t_n,t_{n+1})$, however, this extension contains the future
value $\widetilde X_h(t_{n+1})$ and is therefore, in general, not
$\mathcal F_t$-measurable. We use it only as an auxiliary pathwise
interpolation to establish estimates between mesh points. Although this
extension is not, in general, adapted in the interior of each interval,
the main convergence result will be formulated pathwise in the uniform
norm on $[-r,T]$.

Consider the auxiliary process $\widetilde X_h^{\,n+1}$: it is the value
obtained through scheme \eqref{Backward} when the exact value of the
solution is used at step $n+1$, that is,
\begin{equation}\label{eq:Xtilde}
\widetilde X_h^{\,n+1} := X(t_n) + h\,b\big(t_{n+1},X(t_{n+1}),\widehat
Y(t_{n+1})\big) + \sigma\big(t_n,X(t_n),\widehat Y(t_n)\big)\,\Delta B^H(t_n).
\end{equation}
Here, $\widehat Y(t_m)$ denotes formula \eqref{Ytilde} when
$\widetilde X_h(t+t_i)$ is replaced by $X(t+t_i)$, that is,
\begin{equation}\label{eq:Yhat}
\widehat Y(t) = \sum_{i=-N}^{-1} h\,K(t,t_i,X(t+t_i)).
\end{equation}
Equivalently,
\[
\widehat Y(t) = \sum_{i=-N}^{-1}\int_{t_i}^{t_{i+1}}
K\big(t,C_h(s),X(t+C_h(s))\big)\,ds = \int_{-r}^0
K\big(t,C_h(s),X(t+C_h(s))\big)\,ds.
\]

\begin{definition}\label{localerror}
The local error is defined by $\delta_{n+1}:=|X(t_{n+1})-\widetilde
X_h^{\,n+1}|$.
\end{definition}

\begin{definition}\label{cnsistent}
The scheme is said to be consistent of order $p$ if
$\delta_{n+1}\le Ch^{p+1}$, as $h\to0$.
\end{definition}

In the next section we introduce some technical lemmas needed to obtain
the consistency of the backward-Euler scheme.

\subsection{Estimates for the Numerical Scheme}
\label{sec:4.4}

The following lemma tells us how close the approximation of the memory
term $\widehat Y$ is to the memory term $Y$.

\begin{lemma}
\label{lem:4.4.1}
Let $\widehat Y$ be the approximation defined by \eqref{eq:Yhat}. Then,
for every $u\in[0,T]$, the following estimates hold:
\begin{equation}\label{lem:Y-Yhat}
\begin{aligned}
|Y(u)-\widehat Y(C_h(u))| &\le C(\omega)h^{H-\rho},\\
|Y(u)-\widehat Y(C_h^+(u))| &\le C(\omega)h^{H-\rho}.
\end{aligned}
\end{equation}
where $C(\omega)$ does not depend on $h$.
\end{lemma}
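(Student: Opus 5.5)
Write $v\in\{C_h(u),C_h^+(u)\}$; in both cases $v$ is a mesh point with $|u-v|\le h$. We treat both estimates at once by splitting
\[
|Y(u)-\widehat Y(v)|\le |Y(u)-Y(v)|+|Y(v)-\widehat Y(v)|,
\]
provided $v\le T$, which holds by the convention $C_h^+(T)=T$. The first term measures the time regularity of the exact memory functional. The second is a pure quadrature error at a fixed time.

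\emph{First term.} Using the representation $Y(t)=\int_{-r}^0K(t,s,X(t+s))\,ds$, the first two lines of \eqref{Hip:3} give
\[
|Y(u)-Y(v)|\le \int_{-r}^0\bigl(K_1|u-v|+K_2|X(u+s)-X(v+s)|\bigr)\,ds .
\]
Here $u+s$ and $v+s$ both lie in $[-r,T]$. By the Remark following Theorem~\ref{thm:4.1.1}, the extended path $X$ is $(H-\rho)$-Hölder on $[-r,T]$ with a random constant $C(\omega)$. This yields
\[
|Y(u)-Y(v)|\le rK_1h+rK_2C(\omega)h^{H-\rho}\le C(\omega)h^{H-\rho},
\]
since $h\le T$ and $H-\rho<1$.

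\emph{Second term.} Write $\widehat Y(v)=\int_{-r}^0K(v,C_h(s),X(v+C_h(s)))\,ds$, so that
\[
|Y(v)-\widehat Y(v)|\le\int_{-r}^0\Bigl(K_1|s-C_h(s)|+K_2|X(v+s)-X(v+C_h(s))|\Bigr)\,ds .
\]
Since $|s-C_h(s)|\le h$, the Hölder bound again gives at most $rK_1h+rK_2C(\omega)h^{H-\rho}$. Because $v$ is a mesh point, $v+C_h(s)$ is itself a mesh point in $[-r,T]$, so no boundary issues arise.

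\emph{Main obstacle.} The only delicate point is ensuring that the Hölder constant is uniform across the junction at $0$ between $\phi$ and $X$, so that a single $C(\omega)$ works for all arguments in $[-r,T]$. This is exactly what the Remark after Theorem~\ref{thm:4.1.1} provides, by combining Hypothesis~$\phi$ with \eqref{Teo:EU}. The resulting constant $C(\omega)=r(2K_1T^{1-H+\rho}+2K_2C_X(\omega))$ does not depend on $h$, which proves both estimates in \eqref{lem:Y-Yhat}.
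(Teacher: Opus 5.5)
Your proof is correct and uses the same ingredients as the paper's: the first two lines of \eqref{Hip:3}, together with the uniform $(H-\rho)$-Hölder bound for the extended path on $[-r,T]$ given by the Remark after Theorem~\ref{thm:4.1.1}. The difference is only organizational: the paper compares $K(u,s,X(u+s))$ with $K(v,C_h(s),X(v+C_h(s)))$ in one step using $|(u+s)-(v+C_h(s))|\le 2h$, whereas you pass through $Y(v)$ and split the error into a time-regularity term and a quadrature term.
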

\begin{proof}
By definitions \eqref{Chu}-\eqref{Chu+},
\[
|u-C_h(u)|\le h 
\,\,\,\, \text{and} \,\,\,\,
|u-C_h^+(u)|\le h,
\qquad u\in[0,T].
\]
Consider first $C_h(u)$. Using the integral representation of
$\widehat Y$ given in \eqref{eq:Yhat}, Hypothesis~3, and the
$H-\rho$-Hölder regularity of the extended path of $X$ on $[-r,T]$,
obtained from Hypothesis~$\phi$ and Theorem~\ref{thm:4.1.1}, we have
\begin{align*}
|Y(u)-\widehat Y(C_h(u))|
&= \Big|\int_{-r}^0
\Big[K(u,s,X(u+s))
      -K\big(C_h(u),C_h(s),X(C_h(u)+C_h(s))\big)\Big]ds\Big|\\
&\le \int_{-r}^0
\Big(K_1|u-C_h(u)|+K_1|s-C_h(s)|\\
&\hspace{4.5cm}
+K_2|X(u+s)-X(C_h(u)+C_h(s))|\Big)ds.
\end{align*}
Moreover,
\[
|(u+s)-(C_h(u)+C_h(s))|
\le |u-C_h(u)|+|s-C_h(s)|\le 2h,
\]
so that
\[
|X(u+s)-X(C_h(u)+C_h(s))|
\le C(\omega)h^{H-\rho}.
\]
Since $0<h\le1$ and $H-\rho<1$, we obtain
\[
|Y(u)-\widehat Y(C_h(u))|
\le C(\omega)h^{H-\rho}.
\]
The second estimate is obtained by replacing $C_h(u)$ with $C_h^+(u)$
in the calculation above. Indeed, $|u-C_h^+(u)|\le h$ and
\[
|(u+s)-(C_h^+(u)+C_h(s))|\le 2h,
\]
so the same argument gives
\[
|Y(u)-\widehat Y(C_h^+(u))|
\le C(\omega)h^{H-\rho}.
\]
\end{proof}

We now show that the difference between the approximations
$\widetilde Y$ and $\widehat Y$ depends on the difference between the
solution of the SFDE and the backward-Euler scheme.

\begin{lemma}
\label{lem:4.4.2}
Let $\widetilde Y$ and $\widehat Y$ be the approximations defined by
\eqref{Ytilde} and \eqref{eq:Yhat}, respectively. Then, for each
$n=0,1,\dots,N-1$,
\begin{equation}\label{lem:Yhat-Ytilde}
|\widehat Y(t_{n+1})-\widetilde Y(t_{n+1})|
\le K_2 h\sum_{j=0}^n
|X(t_j)-\widetilde X_h(t_j)|.
\end{equation}
\end{lemma}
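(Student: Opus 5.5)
The plan is to compare the two Riemann sums \eqref{Ytilde} and \eqref{eq:Yhat} term by term at $t=t_{n+1}$. Both use the same time argument $t_{n+1}$ and the same quadrature nodes $t_i$, so
\[
\widehat Y(t_{n+1})-\widetilde Y(t_{n+1})
=\sum_{i=-N}^{-1} h\Bigl[K\bigl(t_{n+1},t_i,X(t_{n+1}+t_i)\bigr)-K\bigl(t_{n+1},t_i,\widetilde X_h(t_{n+1}+t_i)\bigr)\Bigr].
\]
By the Lipschitz condition in the state variable (second line of \eqref{Hip:3}), each bracket is bounded by $K_2|X(t_{n+1+i})-\widetilde X_h(t_{n+1+i})|$. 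Here we use $t_{n+1}+t_i=t_{n+1+i}$, which is a mesh point because the grid is uniform with $T=r$.

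Next we reindex with $j=n+1+i$. As $i$ runs over $-N,\dots,-1$, the index $j$ runs over $n+1-N,\dots,n$.

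We split the resulting sum according to the sign of $j$. For $j\le 0$, the discrete initial condition gives $\widetilde X_h(t_j)=\phi(t_j)=X(t_j)$, so these terms vanish. Only $j=\max(1,n+1-N),\dots,n$ survive. Since $n\le N-1$, this range is contained in $\{0,\dots,n\}$, because $n+1-N\le 0$.

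Adding back the zero term at $j=0$, and using $h>0$ so that enlarging the index set only increases the bound, we obtain
\[
|\widehat Y(t_{n+1})-\widetilde Y(t_{n+1})|\le K_2h\sum_{j=0}^n|X(t_j)-\widetilde X_h(t_j)|,
\]
which is the claimed estimate \eqref{lem:Yhat-Ytilde}.

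There is no genuine obstacle; the argument is pure bookkeeping. The one point needing care is the index range: we must check that the largest index is $j=n$, so that no future value $\widetilde X_h(t_{n+1})$ enters. This holds because $i\le -1$, and it is consistent with the remark after \eqref{Ytilde} that $\widetilde Y(t_{n+1})$ depends only on already computed nodal values. We must also check that indices with $j\le0$ fall in the initial segment, where the scheme coincides with the exact solution.
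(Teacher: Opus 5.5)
Your proposal is correct and follows essentially the same route as the paper: term-by-term comparison of the two Riemann sums, the state-Lipschitz bound with constant $K_2$, vanishing of the terms whose nodes lie in the initial segment (where $X=\widetilde X_h=\phi$), and the reindexing $j=n+1+i$. The only cosmetic difference is that you also drop the $j=0$ term and then add it back, whereas the paper simply keeps it.
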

\begin{proof}
By Hypothesis~3,
\begin{align*}
|\widehat Y(t_{n+1})-\widetilde Y(t_{n+1})|
&= \Big|\sum_{i=-N}^{-1}h\,
\Big[K(t_{n+1},t_i,X(t_{n+1}+t_i))\\
&\hspace{3.4cm}
-K(t_{n+1},t_i,\widetilde X_h(t_{n+1}+t_i))\Big]\Big|\\
&\le K_2h\sum_{i=-N}^{-1}
|X(t_{n+i+1})-\widetilde X_h(t_{n+i+1})|.
\end{align*}
If $i\le -(n+2)$, then $n+i+1<0$ and, by the initial condition,
\[
X(t_{n+i+1})=\widetilde X_h(t_{n+i+1})=\phi(t_{n+i+1}),
\]
so all such terms vanish. Hence, only the indices
$i=-(n+1),\ldots,-1$ remain, and
\[
|\widehat Y(t_{n+1})-\widetilde Y(t_{n+1})|
\le K_2h\sum_{i=-(n+1)}^{-1}
|X(t_{n+i+1})-\widetilde X_h(t_{n+i+1})|.
\]
Changing the index to $j=n+i+1$, for which $j=0,\ldots,n$, we obtain
\[
|\widehat Y(t_{n+1})-\widetilde Y(t_{n+1})|
\le K_2h\sum_{j=0}^{n}
|X(t_j)-\widetilde X_h(t_j)|,
\]
which is \eqref{lem:Yhat-Ytilde}.
\end{proof}

\section{Consistency and Convergence}\label{sec:4}

\subsection{Local Error of the Backward-Euler Method}
\label{sec:consistencia-backward}

We first establish the local consistency of the method.

\begin{theorem}
\label{thm:4.1.2}
Suppose that Hypotheses~$\phi$, 1, 2 and 3 hold and that the parameters
$\beta$ and $\rho$ satisfy
\[
\beta\in(1-H,1/2),\qquad 0<\rho<H-\beta.
\]
Then the local truncation error (Definition \ref{localerror}) of the backward-Euler method
\eqref{Backward} defined in \eqref{eq:Xtilde} satisfies, uniformly on the mesh,
\begin{equation}\label{Teo:delta}
\max_{0\le n\le N-1}\delta_{n+1}
\le C(\omega)h^{H-\rho-\beta+1},
\qquad h\to0.
\end{equation}
where $C(\omega)$ does not depend on $h$ or $n$. Consequently, according
to the previous definition, the method is consistent of order
$H-\rho-\beta$.
\end{theorem}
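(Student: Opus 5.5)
We write the exact one-step identity
\[
X(t_{n+1})=X(t_n)+\int_{t_n}^{t_{n+1}} b(s,X(s),Y(s))\,ds+\int_{t_n}^{t_{n+1}}\sigma(s,X(s),Y(s))\,dB^H(s),
\]
and subtract \eqref{eq:Xtilde}. This gives $\delta_{n+1}\le |A_n|+|B_n|$, where
\[
A_n=\int_{t_n}^{t_{n+1}}\bigl[b(s,X(s),Y(s))-b(t_{n+1},X(t_{n+1}),\widehat Y(t_{n+1}))\bigr]ds ,
\]
\[
B_n=\int_{t_n}^{t_{n+1}}\bigl[\sigma(s,X(s),Y(s))-\sigma(t_n,X(t_n),\widehat Y(t_n))\bigr]dB^H(s).
\]
Since $\sigma(t_n,X(t_n),\widehat Y(t_n))$ is constant on the interval, its integral against $dB^H$ equals $\sigma(\cdot)\Delta B^H(t_n)$. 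The drift term $A_n$ is treated directly. For $s\in[t_n,t_{n+1})$ we have $C_h^+(s)=t_{n+1}$. Hypothesis~1, Theorem~\ref{thm:4.1.1} and the second estimate of Lemma~\ref{lem:4.4.1} then give
\[
|A_n|\le \int_{t_n}^{t_{n+1}}L_1 h+L_2\bigl(C(\omega)h^{H-\rho}+C(\omega)h^{H-\rho}\bigr)\,ds\le C(\omega)h^{1+H-\rho}.
\]
This is better than required, because $h^{1+H-\rho}\le h^{1+H-\rho-\beta}$ for $h\le 1$. This is the only place where the backward choice $C_h^+$ enters, and Lemma~\ref{lem:4.4.1} was stated for exactly this purpose.

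For the stochastic term $B_n$, set $g(s)=\sigma(s,X(s),Y(s))-\sigma(t_n,X(t_n),\widehat Y(t_n))$ on $[t_n,t_{n+1}]$. We use the Zähle fractional integration-by-parts formula with parameter $\beta\in(1-H,1/2)$:
\[
|B_n|\le \Gamma(1-\beta)^{-1}\Gamma(\beta)^{-1}\int_{t_n}^{t_{n+1}}|D^{\beta}_{t_n+}g(s)|\,|D^{1-\beta}_{t_{n+1}-}B^H_{t_{n+1}-}(s)|\,ds.
\]
The Hölder regularity of $B^H$ of order $H-\rho'$ (with $\rho'$ small, $H-\rho'>1-\beta$) gives
\[
|D^{1-\beta}_{t_{n+1}-}B^H_{t_{n+1}-}(s)|\le C(\omega)(t_{n+1}-s)^{H-\rho'+\beta-1}.
\]
Next, $|D^\beta_{t_n+}g(s)|$ is bounded by
\[
\frac{|g(s)|}{(s-t_n)^\beta}+\beta\int_{t_n}^s\frac{|g(s)-g(u)|}{(s-u)^{1+\beta}}\,du .
\]
Here $|g(s)|\le C(\omega)h^{H-\rho}$, by Hypothesis~1, Theorem~\ref{thm:4.1.1} and the first estimate of Lemma~\ref{lem:4.4.1}. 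The difference $g(s)-g(u)=\sigma(s,X(s),Y(s))-\sigma(u,X(u),Y(u))$ is bounded by $C(\omega)|s-u|^{H-\rho}$, provided $Y$ is $(H-\rho)$-Hölder. That property follows from Hypothesis~3 and the extended Hölder regularity of $X$ in the Remark after Theorem~\ref{thm:4.1.1}: write $Y(t)-Y(t')$ as an integral of $K(t,s,X(t+s))-K(t',s,X(t'+s))$. Since $H-\rho>\beta$, the singular integral converges and we get $|D^\beta_{t_n+}g(s)|\le C(\omega)(s-t_n)^{H-\rho-\beta}$ plus the analogous term from $|g(s)|$.

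Putting these together, and using $s-t_n\le h$, we obtain
\[
|B_n|\le C(\omega)h^{H-\rho-\beta}\int_{t_n}^{t_{n+1}}(t_{n+1}-s)^{H-\rho'+\beta-1}ds \le C(\omega)h^{H-\rho-\beta}\,h^{H-\rho'+\beta}.
\]
Because $H-\rho'+\beta>1$ (as $\beta>1-H$ and $\rho'$ is small), the exponent is at least $H-\rho-\beta+1$. If one prefers to avoid the auxiliary $\rho'$, the cruder bound $|D^{1-\beta}B^H|\le C(\omega)(t_{n+1}-s)^{H-\rho+\beta-1}$ already suffices, since $H-\rho+\beta-1>-1$ and the resulting factor $h^{H-\rho+\beta}$ is at most $h$ whenever $H-\rho+\beta\ge 1$. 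Otherwise we choose $\rho'$ accordingly. All constants come from almost surely finite Hölder norms of $B^H$ and $X$ on the whole interval $[0,T]$, so they are independent of $n$ and $h$. Taking the maximum over $n$ gives \eqref{Teo:delta}.

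The main obstacle is the fractional-derivative estimate for $g$. Its Hölder seminorm has to be controlled uniformly on each subinterval, which requires the Hölder continuity of the memory functional $Y$. The mixed nature of $g$ also has to be handled: it combines a frozen discrete value $\widehat Y(t_n)$ with continuous $Y(s)$, so the $|g(s)|/(s-t_n)^\beta$ term must be bounded using Lemma~\ref{lem:4.4.1}, not the Hölder bound. Both of these steps need the condition $H-\rho>\beta$, which is where the restriction $0<\rho<H-\beta$ in the statement is used.
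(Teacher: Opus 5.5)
Your proposal is correct and follows the paper's own argument. It uses the same drift/diffusion splitting. The drift is handled through $C_h^+$ and the second estimate of Lemma~\ref{lem:4.4.1}. The diffusion is handled through the Z\"ahle estimate, with $|g|$ controlled by the first estimate of Lemma~\ref{lem:4.4.1} and the increment part reduced, by cancellation of the frozen value, to the H\"older regularity of $X$ and $Y$. The only difference is that the paper absorbs $\sup_s|D^{1-\beta}_{t_{n+1}-}B^H_{t_{n+1}-}(s)|$ into $C(\omega)$ instead of keeping the factor $(t_{n+1}-s)^{H-\rho'+\beta-1}$.

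Two small slips should be fixed. First, the term $h^{H-\rho}(s-t_n)^{-\beta}$ is not bounded by $h^{H-\rho-\beta}$ via $s-t_n\le h$, since that inequality goes the other way. You must integrate it instead, e.g.\ $\int_{t_n}^{t_{n+1}}(s-t_n)^{-\beta}\,ds=h^{1-\beta}/(1-\beta)$, which gives the same order. Second, your alternative bound $|D^{1-\beta}_{t_{n+1}-}B^H_{t_{n+1}-}(s)|\le C(\omega)(t_{n+1}-s)^{H-\rho+\beta-1}$ needs $H-\rho+\beta>1$ strictly, so that the defining integral converges. The condition $H-\rho+\beta-1>-1$ is not enough.
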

\begin{proof}
By the exact equation \eqref{eq:X} and the definition of the auxiliary
process \eqref{eq:Xtilde},
\begin{align*}
X(t_{n+1})-\widetilde X_h^{\,n+1}
&= \int_{t_n}^{t_{n+1}}
\Big[b(s,X(s),Y(s))
-b\big(C_h^+(s),X(C_h^+(s)),\widehat Y(C_h^+(s))\big)\Big]ds\\
&\quad +\int_{t_n}^{t_{n+1}}
\Big[\sigma(s,X(s),Y(s))
-\sigma\big(C_h(s),X(C_h(s)),\widehat Y(C_h(s))\big)\Big]dB^H(s).
\end{align*}
Indeed, for $s\in[t_n,t_{n+1})$ we have $C_h(s)=t_n$ and
$C_h^+(s)=t_{n+1}$; the values at the endpoint $t_{n+1}$ do not affect
the integrals.

Applying the fractional Young-type estimate to the integral with
respect to $B^H$, we obtain
\[
\delta_{n+1}
=|X(t_{n+1})-\widetilde X_h^{\,n+1}|
\le I_1+I_2+I_3,
\]
where
\begin{align*}
I_1&:=\int_{t_n}^{t_{n+1}}
\Big|b(s,X(s),Y(s))
-b\big(C_h^+(s),X(C_h^+(s)),\widehat Y(C_h^+(s))\big)\Big|ds,\\
I_2&:=C(\omega)\int_{t_n}^{t_{n+1}}
\frac{\big|\sigma(s,X(s),Y(s))
-\sigma(C_h(s),X(C_h(s)),\widehat Y(C_h(s)))\big|}
{(s-t_n)^\beta}\,ds,\\
I_3&:=C(\omega)\int_{t_n}^{t_{n+1}}\int_{t_n}^{s}
\frac{|G(s)-G(u)|}{(s-u)^{\beta+1}}\,du\,ds,
\end{align*}
with
\[
G(v):=\sigma(v,X(v),Y(v))
-\sigma\big(C_h(v),X(C_h(v)),\widehat Y(C_h(v))\big).
\]
For $I_1$, by Hypothesis~1, Theorem~\ref{thm:4.1.1}, and the second
estimate of Lemma~\ref{lem:4.4.1},
\begin{align*}
I_1
&\le \int_{t_n}^{t_{n+1}}
\Big(L_1|s-C_h^+(s)|
+L_2|X(s)-X(C_h^+(s))|\\
&\hspace{4.2cm}
+L_2|Y(s)-\widehat Y(C_h^+(s))|\Big)ds\\
&\le C(\omega)\int_{t_n}^{t_{n+1}}
\big(h+h^{H-\rho}\big)ds
\le C(\omega)h^{H-\rho+1}.
\end{align*}
Here we used $0<h\le1$ and $H-\rho<1$, so $h\le h^{H-\rho}$.

For $I_2$, by Hypothesis~1, Theorem~\ref{thm:4.1.1}, and the first
estimate of Lemma~\ref{lem:4.4.1},
\begin{align*}
I_2
&\le C(\omega)\int_{t_n}^{t_{n+1}}
\frac{L_1|s-C_h(s)|
+L_2|X(s)-X(C_h(s))|
+L_2|Y(s)-\widehat Y(C_h(s))|}
{(s-t_n)^\beta}\,ds\\
&\le C(\omega)h^{H-\rho}
\int_{t_n}^{t_{n+1}}(s-t_n)^{-\beta}\,ds\\
&=\frac{C(\omega)}{1-\beta}
 h^{H-\rho-\beta+1}
\le C(\omega)h^{H-\rho-\beta+1}.
\end{align*}
Finally, if $t_n\le u<s<t_{n+1}$, then $C_h(u)=C_h(s)=t_n$, and hence
the discretized terms appearing in $G(s)-G(u)$ cancel. Thus,
\[
|G(s)-G(u)|
=|\sigma(s,X(s),Y(s))-\sigma(u,X(u),Y(u))|.
\]
By Hypothesis~1,
\begin{equation}\label{eq:I3sigma}
|G(s)-G(u)|
\le L_1|s-u|+L_2|X(s)-X(u)|+L_2|Y(s)-Y(u)|.
\end{equation}
Moreover, by Lemma~\ref{lem:3.1.1} and the $H-\rho$-Hölder regularity
of the extended path of $X$ on $[-r,T]$,
\[
|Y(s)-Y(u)|
\le C|s-u|+C(\omega)|s-u|^{H-\rho}.
\]
Substituting this estimate and \eqref{eq:I3sigma} into $I_3$, we obtain
\begin{align*}
I_3
&\le C(\omega)\int_{t_n}^{t_{n+1}}\int_{t_n}^{s}
\Big[(s-u)^{-\beta}
+(s-u)^{H-\rho-\beta-1}\Big]du\,ds\\
&\le C(\omega)h^{2-\beta}
+C(\omega)h^{H-\rho-\beta+1}.
\end{align*}
The second integral is finite because $H-\rho-\beta>0$. Also, since
$H-\rho<1$ and $0<h\le1$,
\[
h^{2-\beta}\le h^{H-\rho-\beta+1},
\]
and therefore
\[
I_3\le C(\omega)h^{H-\rho-\beta+1}.
\]
Combining the three estimates and using again $0<h\le1$,
\begin{align*}
\delta_{n+1}
&\le C(\omega)h^{H-\rho+1}
+C(\omega)h^{H-\rho-\beta+1}\\
&\le C(\omega)h^{H-\rho-\beta+1}.
\end{align*}
All the constants above are uniform for $t_n\in[0,T]$ and are therefore
independent of $n$. Taking the maximum over $n=0,\ldots,N-1$ gives
\eqref{Teo:delta}.
\end{proof}

\subsection{Convergence of the Backward-Euler Scheme}
\label{sec:convergencia-backward-4.6}

In Subsection~\ref{sec:consistencia-backward} we established the
consistency ( definition \ref{cnsistent}) of the backward-Euler scheme (Theorem~\ref{thm:4.1.2}): the
local error $\delta_{n+1}=|X(t_{n+1})-\widetilde X_h^{\,n+1}|$
satisfies, uniformly on the mesh,
\[
\max_{0\le n\le N-1}\delta_{n+1}
\le C(\omega)\,h^{H-\rho-\beta+1}, \qquad h\to0.
\]
The convergence analysis relies on a uniform a priori bound for
$\widetilde X_h$, on the fractional estimate for integrals with respect
to $B^H$ used in \cite{abreu2026euler}, and on a Gronwall inequality
with weakly singular kernel; see \cite{dixonmckee1986}. In what
follows we write
\[
\widetilde F(t):=\sup_{u\in[-r,t]}|\widetilde X_h(u)|.
\]

\begin{theorem}
\label{thm:convergencia-backward}
Suppose Hypotheses~$\phi$, 1, 2 and 3 hold, and let
\[
\beta\in(1-H,1/2),\qquad 0<\rho<H-\beta.
\]
There exists a set $\Omega_\rho\subset\Omega$ with
$\mathbb P(\Omega_\rho)=1$ such that, for every
$\omega\in\Omega_\rho$, there are measurable random variables
$h_\rho(\omega)>0$ and $C_\rho(\omega)<\infty$ for which, whenever
$0<h<h_\rho(\omega)$,
\begin{equation}\label{eq:global-convergence-backward}
\sup_{-r\le t\le T}\big|X(t)-\widetilde X_h(t)\big|
\le C_\rho(\omega)h^{H-\rho-\beta}.
\end{equation}
The random threshold may be chosen so that
$h_\rho(\omega)L_2<1$ and $h_\rho(\omega)L_3\le1/2$. In particular,
\[
\max_{0\le n\le N}\big|X(t_n)-\widetilde X_h(t_n)\big|
\le C_\rho(\omega)h^{H-\rho-\beta}.
\]
\end{theorem}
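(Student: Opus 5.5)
The plan is to run a pathwise fixed-point/Gronwall argument on the error $e(t):=X(t)-\widetilde X_h(t)$, using an exact integral representation of the scheme. Step 0 is well-posedness of each implicit step. Since $\widetilde Y(t_{n+1})$ depends only on $\widetilde X_h(t_0),\dots,\widetilde X_h(t_n)$, the map
\[
x\mapsto \widetilde X_h(t_n)+h\,b\big(t_{n+1},x,\widetilde Y(t_{n+1})\big)+\sigma\big(t_n,\widetilde X_h(t_n),\widetilde Y(t_n)\big)\Delta B^H(t_n)
\]
is a contraction with constant $hL_2<1$, so Banach's theorem gives a unique $\widetilde X_h(t_{n+1})$. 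With the continuous extension one then checks that, for $t\in[0,T]$,
\[
\widetilde X_h(t)=\phi(0)+\int_0^t b\big(C_h^+(s),\widetilde X_h(C_h^+(s)),\widetilde Y(C_h^+(s))\big)ds+\int_0^t\sigma\big(C_h(s),\widetilde X_h(C_h(s)),\widetilde Y(C_h(s))\big)dB^H(s).
\]
This holds because on each $[t_n,t_{n+1})$ the integrands are constant.

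Step 1 is an a priori bound $\sup_h\widetilde F(T)\le C(\omega)$, together with a bound on the fractional seminorm $\int_0^t |\widetilde X_h(t)-\widetilde X_h(s)|(t-s)^{-\beta-1}ds$. I would apply the fractional Young estimate of \cite{abreu2026euler} to the representation above, using linear growth (Hypothesis~1) and $|\widetilde Y(t)|\le rK_3(1+\widetilde F(t))$. The drift evaluated at $C_h^+(s)$ produces a term $hL_3\widetilde F(t+h)$ in excess of $\widetilde F(t)$. The condition $hL_3\le 1/2$ lets this term be absorbed into the left-hand side, step by step on the mesh. One then arrives at an inequality of the form $\widetilde F(t)\le C(\omega)\big(1+\int_0^t[(t-s)^{-2\beta}+s^{-\beta}]\widetilde F(s)ds\big)$, closed by the weakly singular Gronwall lemma \cite{dixonmckee1986}. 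The threshold $h_\rho(\omega)$ enters here.

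Step 2 is the error equation. Subtracting the representations gives
\[
e(t)=\int_0^t\big[b(s,X,Y)-b(C_h^+,\widetilde X_h(C_h^+),\widetilde Y(C_h^+))\big]ds+\int_0^t\big[\sigma(s,X,Y)-\sigma(C_h,\widetilde X_h(C_h),\widetilde Y(C_h))\big]dB^H.
\]
Insert the intermediate quantities $X(C_h^{(+)}(s))$ and $\widehat Y(C_h^{(+)}(s))$. The ``consistency'' parts are bounded exactly as in the proof of Theorem~\ref{thm:4.1.2} and Lemma~\ref{lem:4.4.1}, now integrated over $[0,t]$ instead of a single step, giving $C(\omega)h^{H-\rho-\beta}$. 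The ``stability'' parts are bounded via Lemma~\ref{lem:4.4.2}, i.e. $|\widehat Y-\widetilde Y|\le K_2 h\sum_j|e(t_j)|\le rK_2\sup|e|$. Set $E(t):=\sup_{[-r,t]}|e|$, and define $\Delta(t):=\int_0^t|e(t)-e(s)|(t-s)^{-\beta-1}ds$ together with the analogous quantity for the memory error.

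Step 3 is expected to be the main obstacle. For the stochastic integral, the $D^\beta$-part requires controlling differences $G(s)-G(u)$ of the coefficient error, i.e. second-order increments of $\sigma$ across different points. Here Lemma~\ref{lem:sigma-second-increment} handles the $\sigma$ part and Lemma~\ref{lem:K-second-increment} handles the memory increments $\widetilde Y(C_h(s))-\widetilde Y(C_h(u))$ versus $Y(s)-Y(u)$. These bound $|G(s)-G(u)|$ by $M_1|e(s)-e(u)|$-type terms, memory second differences, and products of $|e|$ with Hölder increments of $X$ (controlled by $C(\omega)|s-u|^{H-\rho}$) and of $\widetilde X_h$ (controlled by Step 1). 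The resulting coupled inequality is
\[
E(t)+\Delta(t)\le C(\omega)h^{H-\rho-\beta}+C(\omega)\int_0^t\big[(t-s)^{-2\beta}+s^{-\beta}\big]\big(E(s)+\Delta(s)\big)ds,
\]
with additional $hL_2E(t+h)$ terms from $C_h^+$. These are absorbed by $hL_2<1$ on each mesh step, in the same way as in Step 1. The weakly singular Gronwall inequality then yields $E(T)\le C_\rho(\omega)h^{H-\rho-\beta}$. The full-probability set $\Omega_\rho$ is the one on which $B^H$ is $(H-\rho)$-Hölder and the bound of Theorem~\ref{thm:4.1.1} holds, and $e=0$ on $[-r,0]$, which gives the statement.
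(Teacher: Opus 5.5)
Your proof is correct in outline. It shares the paper's skeleton:
\begin{itemize}
\item the contraction argument for each implicit step and the representation \eqref{Backward-1};
\item an a priori bound in which $hL_3\le 1/2$ controls the right-node value, as in \eqref{eq:forward-node-control};
\item an error functional coupling the sup norm, the fractional seminorm and a shifted memory seminorm;
\item the second-increment Lemmas~\ref{lem:sigma-second-increment} and \ref{lem:K-second-increment}, and the weakly singular Gronwall inequality.
\end{itemize}
Where you genuinely differ is in how you split the error equation.

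\textbf{The paper's split.} It inserts the \emph{numerical} intermediate state $(\widetilde X_h(s),\widetilde Y(s))$. By the uniform Hölder bound of Theorem~\ref{thm:apriori-backward} and Corollary~\ref{cor:3.3.1-back}, the right-endpoint shift then costs only $|\widetilde X_h(s)-\widetilde X_h(C_h^+(s))|+|\widetilde Y(s)-\widetilde Y(C_h^+(s))|\le C(\omega)h^{H-\rho}$. The memory error is handled by the continuous estimate of Lemma~\ref{lem:delay-error-backward}. As a result, implicitness vanishes from the error analysis, \eqref{eq:error-gronwall-backward} is a genuine Volterra inequality, and Theorem~\ref{thm:4.1.2} is never needed.

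\textbf{Your split.} You insert the \emph{exact} state $(X(C_h^{\pm}(s)),\widehat Y(C_h^{\pm}(s)))$, which is a Lax-type consistency/stability split. This buys you reuse of Theorem~\ref{thm:4.1.2} and Lemmas~\ref{lem:4.4.1}--\ref{lem:4.4.2}: the sup part of the stochastic consistency term is just a sum of $N$ local errors, and the memory stability bound is the elementary discrete one.

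\textbf{The price of your split.} The stability part of the drift contains the genuinely implicit term $L_2|e(C_h^+(s))|$. A term $hL_2E(t+h)$ cannot literally be absorbed into $E(t)$ inside a continuous Volterra inequality. What works is an error analogue of \eqref{eq:forward-node-control}. From the recursion, Theorem~\ref{thm:4.1.2} and Lemma~\ref{lem:4.4.2},
\[
(1-hL_2)\,|e(t_{m+1})|\le C(\omega)\bigl(E(t_m)+h^{H-\rho-\beta+1}\bigr).
\]
This converts every future nodal value into past quantities before Gronwall is applied. In your seminorm $\Delta$ the same term carries a factor $O(h^{1-\beta})$ rather than $hL_2$, so small $h$, not just $hL_2<1$, is what absorbs it. For the seminorm functionals you also need the consistency part of the stochastic integral on arbitrary intervals $[v,u]$ (whole cells plus two partial cells), not only on $[0,t]$.

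\textbf{Step~1.} It must also deliver the uniform $(H-\rho)$-Hölder bound on $\widetilde X_h$ that your Step~3 uses. Your displayed inequality for $\widetilde F$ alone does not close, because the diffusion increments require the seminorms. This is why the paper runs its Gronwall argument on the combined functional $\varphi_h$.
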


\begin{corollary}
\label{cor:highprob-backward}
Under the hypotheses of Theorem~\ref{thm:convergencia-backward}, for every
$\varepsilon\in(0,1)$ there exist deterministic constants
$h_{\varepsilon,\rho}>0$ and $C_{\varepsilon,\rho}<\infty$ and an event
$\Omega_{\varepsilon,\rho}$ with
$\mathbb P(\Omega_{\varepsilon,\rho})>1-\varepsilon$ such that, for all
$\omega\in\Omega_{\varepsilon,\rho}$ and $0<h<h_{\varepsilon,\rho}$,
\[
\sup_{-r\le t\le T}|X(t)-\widetilde X_h(t)|
\le C_{\varepsilon,\rho}h^{H-\rho-\beta}.
\]
\end{corollary}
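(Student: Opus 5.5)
The corollary follows from Theorem~\ref{thm:convergencia-backward} by a standard measure-theoretic truncation: almost-sure finiteness of the random quantities, combined with continuity of probability, yields deterministic thresholds on an event of large probability. I take the set $\Omega_\rho$ of full probability and the random variables $h_\rho(\omega)>0$ and $C_\rho(\omega)<\infty$ given by the theorem. For $k\in\mathbb N$ I define the events
\[
A_k:=\Bigl\{\omega\in\Omega_\rho:\ h_\rho(\omega)>1/k,\ C_\rho(\omega)\le k\Bigr\}.
\]
These are measurable because $h_\rho$ and $C_\rho$ are measurable. They increase in $k$, and their union is $\Omega_\rho$, since on $\Omega_\rho$ one has $h_\rho>0$ and $C_\rho<\infty$. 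By continuity from below, $\mathbb P(A_k)\uparrow\mathbb P(\Omega_\rho)=1$. Given $\varepsilon\in(0,1)$, I therefore fix $k_\varepsilon$ with $\mathbb P(A_{k_\varepsilon})>1-\varepsilon$.

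I then set $\Omega_{\varepsilon,\rho}:=A_{k_\varepsilon}$, $h_{\varepsilon,\rho}:=1/k_\varepsilon$ and $C_{\varepsilon,\rho}:=k_\varepsilon$. Now take $\omega\in\Omega_{\varepsilon,\rho}$ and $0<h<h_{\varepsilon,\rho}$. Then $h<1/k_\varepsilon<h_\rho(\omega)$, so estimate \eqref{eq:global-convergence-backward} applies and gives
\[
\sup_{-r\le t\le T}|X(t)-\widetilde X_h(t)|\le C_\rho(\omega)h^{H-\rho-\beta}\le k_\varepsilon h^{H-\rho-\beta},
\]
which is the claim.

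The one point needing care is measurability of $h_\rho$ and $C_\rho$; the theorem supplies this explicitly. If it did not, I would replace them by measurable versions, for instance by expressing $C_\rho$ through Hölder norms of $B^H$ and $X$, which are measurable as suprema over rational points, and choosing $h_\rho$ as an explicit function of such quantities. Should the step-size conditions $hL_2<1$ and $hL_3\le 1/2$ also be wanted with deterministic constants, I would additionally take $h_{\varepsilon,\rho}\le\min\{1/(2L_2),1/(2L_3)\}$. This costs nothing, since those constraints are deterministic.
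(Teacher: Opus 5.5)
Your proof is correct and is essentially the paper's argument. The paper likewise uses the increasing events $A_m=\{h_\rho\ge m^{-1},\,C_\rho\le m\}$, which exhaust $\Omega_\rho$, chooses $m$ with $\mathbb P(A_m)>1-\varepsilon$, and sets $h_{\varepsilon,\rho}=m^{-1}$ and $C_{\varepsilon,\rho}=m$; your strict inequality $h_\rho>1/k$ is only a cosmetic variant.
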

\begin{proof}
On the full-probability set of Theorem~\ref{thm:convergencia-backward},
$h_\rho(\omega)>0$ and $C_\rho(\omega)<\infty$. Hence
\[
A_m:=\{h_\rho\ge m^{-1},\;C_\rho\le m\},\qquad m\ge1,
\]
increases to $\Omega_\rho$. Choose $m$ so that
$\mathbb P(A_m)>1-\varepsilon$ and take
$h_{\varepsilon,\rho}=m^{-1}$,
$C_{\varepsilon,\rho}=m$, and
$\Omega_{\varepsilon,\rho}=A_m$.
\end{proof}

The proof of the theorem is organized in three stages. First, we
establish the well-posedness of the implicit step and an exact
integral representation of the scheme at the mesh points. Next, we
obtain the a priori bounds and the global error estimates needed to
simultaneously control the pointwise error, its increments, and the
memory term. Finally, these estimates are combined into an integral
inequality for $\mathcal E_h$, which is closed by means of a Gronwall
inequality with weakly singular kernel.

The evaluation of the drift at $t_{n+1}$ makes relation
\eqref{Backward} implicit. The following proposition guarantees that
every step of the scheme is well defined.

\begin{proposition}
\label{prop:existencia-implicito}
Suppose $hL_2<1$. For each $n=0,\dots,N-1$, if the discrete history
$\{\widetilde X_h(t_j): -N\le j\le n\}$ is determined, then there
exists a unique value $\widetilde X_h(t_{n+1})$ satisfying
\eqref{Backward}. In particular, the discrete initial condition
determines, by induction, a unique sequence
$\{\widetilde X_h(t_n)\}_{n=-N}^{N}$.
\end{proposition}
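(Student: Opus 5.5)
The plan is to recast \eqref{Backward} as a fixed-point equation for the unknown $x=\widetilde X_h(t_{n+1})$ and apply the Banach contraction principle on $\mathbb R$, then conclude by induction on $n$.

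Fix $n$ and suppose the history $\{\widetilde X_h(t_j):-N\le j\le n\}$ is known. By \eqref{Ytilde}, $\widetilde Y(t_n)$ involves only $\widetilde X_h(t_n+t_i)$ with $i\le -1$, so it is a known number. Likewise, $\widetilde Y(t_{n+1})$ involves only $\widetilde X_h(t_{n+1+i})$ with $n+1+i\le n$, so it is also known; this was already noted after \eqref{Ytilde}. Call these values $y_n$ and $y_{n+1}$. Set
\[
a_n:=\widetilde X_h(t_n)+\sigma\bigl(t_n,\widetilde X_h(t_n),y_n\bigr)\,\Delta B^H(t_n),
\]
which is a fixed real number for the given $\omega$. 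Then \eqref{Backward} is equivalent to $x=\Phi_n(x)$, where
\[
\Phi_n(x):=a_n+h\,b\bigl(t_{n+1},x,y_{n+1}\bigr),\qquad x\in\mathbb R.
\]

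By the Lipschitz condition in Hypothesis~1, taking $y_1=y_2=y_{n+1}$,
\[
|\Phi_n(x)-\Phi_n(x')|\le hL_2|x-x'|.
\]
Since $hL_2<1$, the map $\Phi_n$ is a strict contraction of the complete space $\mathbb R$. Banach's fixed-point theorem therefore gives exactly one solution $x^\ast$, which we take as $\widetilde X_h(t_{n+1})$. Uniqueness also follows directly: if $x,x'$ both solve the equation, then $|x-x'|\le hL_2|x-x'|$, which forces $x=x'$.

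Induction then starts from the discrete initial condition $\widetilde X_h(t_n)=\phi(t_n)$ for $n=-N,\dots,0$. At each step $n=0,\dots,N-1$ the argument above produces a unique $\widetilde X_h(t_{n+1})$, so the whole sequence $\{\widetilde X_h(t_n)\}_{n=-N}^{N}$ is uniquely determined.

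The only point needing care is verifying that $\widetilde Y(t_{n+1})$ does not depend on the unknown, so that the fixed-point map really involves $x$ only through the drift's state variable. This holds because the sum in \eqref{Ytilde} stops at $i=-1$. As a side remark, since $a_n$ is a measurable function of the data and the fixed point is the limit of Picard iterates, $\widetilde X_h(t_{n+1})$ is measurable in $\omega$.
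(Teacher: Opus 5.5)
Your proof is correct and matches the paper's argument: you note that $\widetilde Y(t_n)$ and $\widetilde Y(t_{n+1})$ depend only on the already determined history, and you write \eqref{Backward} as a fixed point of a map that is $hL_2$-Lipschitz by Hypothesis~1. You then apply Banach's theorem on $\mathbb R$ and conclude by induction from the discrete initial condition, exactly as the paper does; your measurability remark via Picard iterates is a correct addition not in the paper.
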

\begin{proof}
By \eqref{Ytilde},
\[
\widetilde Y(t_{n+1})
= h\sum_{i=-N}^{-1}
K\bigl(t_{n+1},t_i,\widetilde X_h(t_{n+1+i})\bigr).
\]
Since $i\le -1$, we have $n+1+i\le n$; hence $\widetilde Y(t_{n+1})$
depends only on the already known discrete history. Likewise,
$\widetilde Y(t_n)$ is also determined.

For $x\in\mathbb R$, define
\[
\Psi_n(x):=\widetilde X_h(t_n)
+h\,b\bigl(t_{n+1},x,\widetilde Y(t_{n+1})\bigr) +\sigma\bigl(t_n,\widetilde X_h(t_n),\widetilde Y(t_n)\bigr)
\Delta B^H(t_n)
.
\]
Equation \eqref{Backward} is equivalent to finding a fixed point of
$\Psi_n$. By Hypothesis~1, for any $x,y\in\mathbb R$,
\[
|\Psi_n(x)-\Psi_n(y)|
\le hL_2|x-y|.
\]
Since $hL_2<1$, $\Psi_n$ is a contraction from $\mathbb R$ to
$\mathbb R$. The Banach fixed-point theorem provides a unique fixed
point, which is precisely $\widetilde X_h(t_{n+1})$. The last
assertion follows by induction from the discrete initial condition.
\end{proof}

The operators $C_h$ and $C_h^+$, defined in \eqref{Chu}-\eqref{Chu+},
allow us to express compactly the temporal asymmetry of the scheme: if
$u\in[t_n,t_{n+1})$, then $C_h(u)=t_n$ is the point where the diffusion
is evaluated and $C_h^+(u)=t_{n+1}$ is the point where the implicit
drift is evaluated.

Summing \eqref{Backward} for $n=0,\dots,m-1$ we obtain, for each mesh
point $t_m$, the exact identity
\begin{equation}\label{Backward-1}
\begin{aligned}
\widetilde X_h(t_m)
={}&X(0)
+ \int_0^{t_m}
 b\bigl(C_h^+(u),\widetilde X_h(C_h^+(u)),\widetilde Y(C_h^+(u))\bigr)\,du\\
&+ \int_0^{t_m}
 \sigma\bigl(C_h(u),\widetilde X_h(C_h(u)),\widetilde Y(C_h(u))\bigr)\,dB^H(u).
\end{aligned}
\end{equation}
Indeed, on each interval $[t_n,t_{n+1})$ the arguments of the
coefficients are constant. Consequently,
\[
\int_{t_n}^{t_{n+1}}
 b\bigl(C_h^+(u),\widetilde X_h(C_h^+(u)),\widetilde Y(C_h^+(u))\bigr)
\,du
= h\,b\bigl(t_{n+1},\widetilde X_h(t_{n+1}),
\widetilde Y(t_{n+1})\bigr),
\]
while
\[
\int_{t_n}^{t_{n+1}}
 \sigma\bigl(C_h(u),\widetilde X_h(C_h(u)),\widetilde Y(C_h(u))\bigr)
\,dB^H(u)
=\sigma\bigl(t_n,\widetilde X_h(t_n),\widetilde Y(t_n)\bigr)
\Delta B^H(t_n).
\]
Summing these identities and using $\widetilde X_h(0)=\phi(0)=X(0)$
recovers exactly the telescoping sum of \eqref{Backward}. In
particular, \eqref{Backward-1} does not introduce any new approximation
and does not require the use of the continuous interpolation between
mesh points.

By the definition of the auxiliary interpolation, the same
representation with upper limit $t$ holds for every $t\in[0,T]$. This
identity is understood pathwise and will be used only to obtain the a
priori estimates of the interpolation.

\begin{remark}[Comparison with the explicit Euler scheme]
\label{rem:explicit-backward}
Representation \eqref{Backward-1} allows us to identify precisely the
difference between the explicit Euler scheme studied in
\cite{abreu2026euler} and the backward-Euler scheme considered here. In
the explicit scheme, both the drift and the diffusion are evaluated at
the left endpoint of each discretization interval, through $C_h$. In
the backward-Euler scheme only the evaluation of the drift changes: it
is evaluated at $C_h^+(u)$, while the diffusion coefficient continues
to be evaluated at $C_h(u)$.

Consequently, the arguments of \cite{abreu2026euler} that depend only
on the diffusion, or on properties of the memory term that do not
involve the implicit evaluation of the drift, apply directly. On the
other hand, the estimates involving the drift must be revisited to
account for the shift from $C_h(u)$ to $C_h^+(u)$. This distinction
will be used systematically in the following results to separate the
arguments inherited from the explicit Euler scheme from those requiring
a specific adaptation to the backward-Euler scheme.
\end{remark}

The following estimates depend only on the structure of the delay term
and on Hypothesis~3. They are included to fix the notation used in the
subsequent estimates.

\begin{lemma}
\label{lem:3.1.1}
Let $Y$ be given by \eqref{eq:Y} and suppose Hypothesis~3 holds. Then,
for $0\le s\le t\le T$,
\[
|Y(t)-Y(s)|
\le rK_1|t-s|
+K_2\int_{-r}^{0}|X(t+u)-X(s+u)|\,du.
\]
\end{lemma}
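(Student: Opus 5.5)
The plan is to write both values of the memory functional in shifted form and split the difference into a kernel-increment part and a state-increment part. By \eqref{eq:Y},
\[
Y(t)-Y(s)=\int_{-r}^{0}\bigl[K(t,u,X(t+u))-K(s,u,X(s+u))\bigr]\,du ,
\]
where the second argument $u\in[-r,0]$ is the same in both terms. The second representation in \eqref{eq:Y}, with moving integration limits, would create boundary terms, so we will not use it.

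Inside the integrand we add and subtract $K(s,u,X(t+u))$. This gives two differences:
\[
K(t,u,X(t+u))-K(s,u,X(t+u))
\quad\text{and}\quad
K(s,u,X(t+u))-K(s,u,X(s+u)).
\]
The first has the same spatial and state arguments and differs only in the time variable. The first line of \eqref{Hip:3}, with $v=u$, bounds it by $K_1|t-s|$. The second has the same time and delay arguments and differs only in the state. The second line of \eqref{Hip:3} bounds it by $K_2|X(t+u)-X(s+u)|$.

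We then apply the triangle inequality under the integral and integrate over $u\in[-r,0]$. The first contribution integrates to $rK_1|t-s|$, since the interval has length $r$. The second gives $K_2\int_{-r}^0|X(t+u)-X(s+u)|\,du$. Together these are exactly the claimed bound.

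We expect no real obstacle. The one point to check is that $t+u$ and $s+u$ lie in $[-r,T]$, where $X$ is defined, using $X=\phi$ on $[-r,0]$. This holds because $0\le s\le t\le T$ and $u\in[-r,0]$. Continuity of $K$ and of the extended path of $X$ makes all integrands measurable and bounded, so the integrals are finite.
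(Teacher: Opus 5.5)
Your proof is correct and is the paper's argument: the paper also uses the fixed-interval representation of $Y$ and adds and subtracts $K(s,u,X(t+u))$. It then bounds the two pieces by the first two lines of Hypothesis~3 and integrates over $[-r,0]$ to get $rK_1|t-s|$ plus the $K_2$-integral.
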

\begin{proof}
Using the representation of $Y$ over the fixed interval $[-r,0]$,
\begin{align*}
Y(t)-Y(s)
={}&\int_{-r}^{0}
\big[K(t,u,X(t+u))-K(s,u,X(t+u))\big]\,du\\
&+\int_{-r}^{0}
\big[K(s,u,X(t+u))-K(s,u,X(s+u))\big]\,du.
\end{align*}
By Hypothesis~3,
\[
|K(t,u,X(t+u))-K(s,u,X(t+u))|\le K_1|t-s|,
\]
while
\[
|K(s,u,X(t+u))-K(s,u,X(s+u))|
\le K_2|X(t+u)-X(s+u)|.
\]
Integrating over $[-r,0]$ gives the estimate.
\end{proof}

\begin{lemma}
\label{lem:3.3.1}
Let $\widetilde Y$ be the Riemann-sum approximation defined in
\eqref{Ytilde}. Then, for every $u\in[0,T]$,
\begin{align}
|\widetilde Y(C_h(u))|
&\le rK_3\bigl(1+\widetilde F(u-h)\bigr),
\label{eq:Ytilde-left-bound}\\
|\widetilde Y(C_h^+(u))|
&\le rK_3\bigl(1+\widetilde F(C_h(u))\bigr)
\le rK_3\bigl(1+\widetilde F(u)\bigr).
\label{eq:Ytilde-right-bound}
\end{align}
\end{lemma}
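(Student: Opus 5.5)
The plan is to bound the Riemann sum in \eqref{Ytilde} directly, term by term, using only the linear-growth condition of Hypothesis~3. Fix $u\in[0,T]$ and let $t:=C_h(u)$ or $t:=C_h^+(u)$, which is a mesh point $t_m$ with $m\ge0$. By \eqref{Ytilde} and $|K(t,s,x)|\le K_3(1+|x|)$,
\[
|\widetilde Y(t_m)|\le \sum_{i=-N}^{-1}hK_3\bigl(1+|\widetilde X_h(t_m+t_i)|\bigr)
\le NhK_3\Bigl(1+\max_{-N\le i\le -1}|\widetilde X_h(t_{m+i})|\Bigr).
\]
Since $Nh=r$, the whole proof reduces to identifying the largest time argument that appears. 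For $i\le-1$ it is $t_{m-1}$.

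For \eqref{eq:Ytilde-left-bound}, take $m$ with $C_h(u)=t_m$. All arguments $t_{m+i}$ lie in $[-r,t_m-h]$, so the maximum is at most $\widetilde F(t_m-h)$. Since $t_m=C_h(u)\le u$, we have $t_m-h\le u-h$. Because $\widetilde F$ is nondecreasing, $\widetilde F(t_m-h)\le\widetilde F(u-h)$. The one subtle point is the case $u\in[0,h)$, where $u-h<0$. The arguments are then in $[-r,-h]\subset[-r,u-h]$, so the bound still holds, with $\widetilde F$ understood as the supremum over $[-r,u-h]$. This set is nonempty since $u-h\ge-r$ when $h\le r$. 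The values of $\widetilde X_h$ at negative nodes equal $\phi$, consistent with the extension $\widetilde X_h=\phi$ on $[-r,0]$.

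For \eqref{eq:Ytilde-right-bound}, if $u\in[t_n,t_{n+1})$ then $C_h^+(u)=t_{n+1}$, and the largest argument is $t_{n+1}-h=t_n=C_h(u)$. This gives the first inequality $rK_3(1+\widetilde F(C_h(u)))$. Monotonicity of $\widetilde F$ and $C_h(u)\le u$ then give the second. The endpoint $u=T$ needs separate care because of the convention $C_h^+(T)=T=t_N$. Here the largest argument is $t_{N-1}\le T=u$, so the second inequality still holds. For the first inequality, note $C_h(T)=T$, so $\widetilde F(C_h(T))=\widetilde F(T)$ also dominates.

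There is no real obstacle here. The only thing to verify carefully is the index bookkeeping: the Riemann sum excludes $i=0$, which gives the shift by one step. The boundary conventions ($u<h$ on the left and $u=T$ on the right) must also be checked so that the monotone supremum $\widetilde F$ is evaluated at a time no smaller than every node used.
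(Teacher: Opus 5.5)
Your proof is correct and follows the paper's argument. You use the linear growth of $K$ together with $Nh=r$, and the index restriction $i\le -1$ pushes the largest node argument one step back: to $C_h(u)$ for $\widetilde Y(C_h^+(u))$, and to $T-h$ at $u=T$. You then conclude by monotonicity of $\widetilde F$. The only difference is that the paper cites Lemma~2 of the explicit-scheme work for the left-point bound, whereas you prove it directly by the same mechanism and check the boundary case $u<h$ explicitly.
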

\begin{proof}
The first inequality is Lemma~2 of \cite{abreu2026euler}, since the
definition of the discrete delay functional and its evaluation at
$C_h$ do not change. For the second, if $u\in[t_n,t_{n+1})$, then
$C_h^+(u)=t_{n+1}$ and, since $i\le-1$,
$t_{n+1}+t_i\le t_n=C_h(u)$. Hence,
\[
|\widetilde Y(C_h^+(u))|
\le \sum_{i=-N}^{-1}hK_3
\bigl(1+|\widetilde X_h(C_h^+(u)+t_i)|\bigr)
\le rK_3\bigl(1+\widetilde F(C_h(u))\bigr).
\]
For $u=T$ the same bound is obtained because the largest argument
appearing in $\widetilde Y(T)$ is $T-h$. The last inequality follows
from the monotonicity of $\widetilde F$.
\end{proof}

\begin{lemma}
\label{lem:3.3.2}
With $\widetilde Y$ given by \eqref{Ytilde},
\[
|\widetilde Y(u)-\widetilde Y(v)| \le rK_1|u-v| + K_2\int_{-r}^0
\big|\widetilde X_h(u+C_h(s))-\widetilde X_h(v+C_h(s))\big|\,ds,
\qquad 0\le v\le u\le T.
\]
\end{lemma}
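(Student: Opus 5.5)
The plan is to mimic the proof of Lemma~\ref{lem:3.1.1}, working from the integral representation of $\widetilde Y$ over the fixed interval $[-r,0]$ stated after \eqref{Ytilde}. For $0\le v\le u\le T$ we have
\[
\widetilde Y(u)-\widetilde Y(v)=\int_{-r}^{0}\Big[K\big(u,C_h(s),\widetilde X_h(u+C_h(s))\big)-K\big(v,C_h(s),\widetilde X_h(v+C_h(s))\big)\Big]ds.
\]
Here the second argument $C_h(s)$ is the same in both terms, so no quadrature error in the delay variable appears. We also need the integrand to be well defined. For $s\in[-r,0)$ we have $C_h(s)\in[-r,-h]$, so $u+C_h(s)$ and $v+C_h(s)$ lie in $[-r,T]$, where $\widetilde X_h$ is defined: it equals $\phi$ on $[-r,0]$ and is given by the auxiliary interpolation on $[0,T]$.

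Next, add and subtract the mixed term $K\big(v,C_h(s),\widetilde X_h(u+C_h(s))\big)$ inside the integral. This splits the difference into a part with only a time increment and a part with only a state increment.

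For the first part, apply the first line of Hypothesis~3 with equal second arguments. This bounds the integrand by $K_1|u-v|$, and integrating over $[-r,0]$ gives $rK_1|u-v|$. For the second part, the Lipschitz property in $x$ (second line of Hypothesis~3) bounds the integrand by $K_2\,|\widetilde X_h(u+C_h(s))-\widetilde X_h(v+C_h(s))|$. Integrating this term as it stands gives exactly the remaining term of the claimed estimate.

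There is essentially no obstacle. The only point needing care is that the representation of $\widetilde Y$ as an integral holds for every real $t\in[0,T]$, not just at mesh points. This is immediate from \eqref{Ytilde}, since $C_h(s)=t_i$ on $[t_i,t_{i+1})$. The triangle inequality then concludes the proof.
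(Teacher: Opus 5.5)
Your proof is correct and matches the argument the paper relies on. The paper just cites Lemma~3 of the explicit-Euler work, remarking that only Hypothesis~3 and definition \eqref{Ytilde} are needed, and that proof is exactly your add-and-subtract splitting (the same one used for $Y$ in Lemma~\ref{lem:3.1.1}), with the shared node $C_h(s)$ removing any error in the delay variable.
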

\begin{proof}
This estimate is Lemma~3 of \cite{abreu2026euler}. Its proof uses only
Hypothesis~3 and the definition of $\widetilde Y$, which coincides
with the one used in the explicit Euler scheme.
\end{proof}

\begin{lemma}
\label{lem:3.3.3-back}
Let $v\in[t_n,t_{n+1})$ and suppose $hL_3\le 1/2$. Then there exists a
random constant $C(\omega)$, independent of $h$ and $n$, such that
\begin{align}
|\widetilde X_h(C_h^+(v))|
&\le C(\omega)\bigl(1+\widetilde F(C_h(v))\bigr),
\label{eq:forward-node-control}\\
|\widetilde X_h(v)-\widetilde X_h(C_h(v))|
&\le C(\omega)\bigl(1+\widetilde F(C_h(v))\bigr)
(v-C_h(v))^{H-\rho}.
\label{eq:one-step-backward}
\end{align}
\end{lemma}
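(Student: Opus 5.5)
First I will prove \eqref{eq:forward-node-control}, working directly from the implicit step \eqref{Backward} at index $n$. Put $x:=\widetilde X_h(t_{n+1})=\widetilde X_h(C_h^+(v))$. The linear growth bound in Hypothesis~1 gives
\[
|x|\le |\widetilde X_h(t_n)| + hL_3\bigl(1+|x|+|\widetilde Y(t_{n+1})|\bigr)
+L_3\bigl(1+|\widetilde X_h(t_n)|+|\widetilde Y(t_n)|\bigr)|\Delta B^H(t_n)|.
\]
Lemma~\ref{lem:3.3.1} controls both memory terms. By \eqref{eq:Ytilde-right-bound}, $|\widetilde Y(t_{n+1})|\le rK_3(1+\widetilde F(t_n))$. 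By \eqref{eq:Ytilde-left-bound}, together with the monotonicity of $\widetilde F$, $|\widetilde Y(t_n)|\le rK_3(1+\widetilde F(t_n))$. Hölder continuity of the fBm paths gives $|\Delta B^H(t_n)|\le \|B^H\|_{H-\rho}h^{H-\rho}\le \|B^H\|_{H-\rho}T^{H-\rho}$. This quantity is an a.s.\ finite random variable that does not depend on $h$ or $n$. The only term containing the unknown on the right-hand side is $hL_3|x|$. Since $hL_3\le 1/2$, I will move it to the left and multiply by $2$. This yields
\[
|x|\le C(\omega)\bigl(1+\widetilde F(t_n)\bigr),
\]
where $C(\omega)$ depends only on $L_3,r,K_3,T$ and $\|B^H\|_{H-\rho}$. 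This is \eqref{eq:forward-node-control}, because $t_n=C_h(v)$.

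Next I will prove \eqref{eq:one-step-backward} from the definition of the continuous extension. For $v\in[t_n,t_{n+1})$,
\[
\widetilde X_h(v)-\widetilde X_h(t_n)= b\bigl(t_{n+1},x,\widetilde Y(t_{n+1})\bigr)(v-t_n)+\sigma\bigl(t_n,\widetilde X_h(t_n),\widetilde Y(t_n)\bigr)\bigl(B^H(v)-B^H(t_n)\bigr).
\]
For the drift term, Hypothesis~1 bounds $|b|$ by $L_3(1+|x|+|\widetilde Y(t_{n+1})|)$. By the first part and Lemma~\ref{lem:3.3.1}, this is at most $C(\omega)(1+\widetilde F(t_n))$. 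I will then use $v-t_n\le h\le 1$, so that $(v-t_n)\le (v-t_n)^{H-\rho}$. For the diffusion term, Hypothesis~1 and Lemma~\ref{lem:3.3.1} give $|\sigma(\cdot)|\le C(1+\widetilde F(t_n))$. The $(H-\rho)$-Hölder bound on $B^H$ gives $|B^H(v)-B^H(t_n)|\le \|B^H\|_{H-\rho}(v-t_n)^{H-\rho}$. Adding the two contributions gives \eqref{eq:one-step-backward}. The constant involves only the Hölder seminorm of $B^H$ and the structural constants, so it is independent of $h$ and $n$.

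The one delicate step is the first estimate. The drift bound contains the unknown future value $x$, so the argument can close only through absorption, and this is exactly where the condition $hL_3\le1/2$ enters. A second point needs care: the memory at $t_{n+1}$ must be bounded by $\widetilde F(C_h(v))$ and not by $\widetilde F(t_{n+1})$, since otherwise the bound would be circular. The right-endpoint bound in Lemma~\ref{lem:3.3.1} is what guarantees this. I will also make sure $C(\omega)$ is taken on the full-probability set on which $B^H$ is $(H-\rho)$-Hölder.
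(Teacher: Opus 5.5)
Your proposal is correct and follows essentially the same route as the paper. You bound both memory values by $rK_3(1+\widetilde F(t_n))$ via Lemma~\ref{lem:3.3.1}, absorb the implicit term $hL_3|\widetilde X_h(t_{n+1})|$ using $hL_3\le 1/2$, and then insert this node bound into the auxiliary interpolation on $[t_n,t_{n+1})$ together with the H\"older bound for $B^H$. The only cosmetic difference is that the paper converts the drift factor via $v-t_n\le T^{1-H+\rho}(v-t_n)^{H-\rho}$ instead of your $v-t_n\le h\le 1$, which avoids relying on $h\le 1$.
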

\begin{proof}
Write $t_n=C_h(v)$. By Lemma~\ref{lem:3.3.1},
\[
|\widetilde Y(t_{n+1})|
\le rK_3\bigl(1+\widetilde F(t_n)\bigr),
\qquad
|\widetilde Y(t_n)|
\le rK_3\bigl(1+\widetilde F(t_n)\bigr).
\]
From recursion \eqref{Backward}, the linear growth of $b$ and
$\sigma$, and the $(H-\rho)$-Hölder regularity of $B^H$,
\begin{align*}
|\widetilde X_h(t_{n+1})|
\le{}& |\widetilde X_h(t_n)|
+hL_3\bigl(1+|\widetilde X_h(t_{n+1})|
+|\widetilde Y(t_{n+1})|\bigr)\\
&+L_3\bigl(1+|\widetilde X_h(t_n)|
+|\widetilde Y(t_n)|\bigr)
|B^H(t_{n+1})-B^H(t_n)|.
\end{align*}
Hence,
\[
(1-hL_3)|\widetilde X_h(t_{n+1})|
\le C(\omega)\bigl(1+\widetilde F(t_n)\bigr),
\]
and the condition $hL_3\le1/2$ allows the implicit term to be absorbed,
giving \eqref{eq:forward-node-control}.

Using now the auxiliary interpolation on $[t_n,t_{n+1})$,
\begin{align*}
|\widetilde X_h(v)-\widetilde X_h(t_n)|
\le{}& L_3\bigl(1+|\widetilde X_h(t_{n+1})|
+|\widetilde Y(t_{n+1})|\bigr)(v-t_n)\\
&+L_3\bigl(1+|\widetilde X_h(t_n)|
+|\widetilde Y(t_n)|\bigr)
|B^H(v)-B^H(t_n)|.
\end{align*}
Applying \eqref{eq:forward-node-control}, Lemma~\ref{lem:3.3.1}, and
the Hölder regularity of $B^H$, and using
$v-t_n\le T^{1-H+\rho}(v-t_n)^{H-\rho}$, we obtain
\eqref{eq:one-step-backward}. The estimate shows that the evaluation at
the right endpoint of the cell is controlled by $\widetilde F$ at the
left endpoint.
\end{proof}

The following estimate controls the fractional seminorm of the
increments of $\widetilde X_h$. The terms associated with the drift are
evaluated at $C_h^+$, while the diffusion terms retain the evaluation
at $C_h$.

\begin{lemma}
\label{lem:3.3.4-back}
\begin{align*}
&\int_0^{C_h(u)} |\widetilde X_h(C_h(u))-\widetilde X_h(v)|(u-v)^{-\beta-1}dv \\
&\le{} C(\omega)\Big(1+\int_0^{C_h(u)}|\widetilde X_h(w)|(u-w)^{-2\beta}dw
+\int_0^{C_h(u)}\widetilde F(w)(u-w)^{-2\beta}dw\\
&+\int_0^{C_h(u)}(u-w)^{-\beta}\int_0^{C_h(w)}\frac{|\widetilde X_h(C_h(w))-\widetilde
X_h(z)|}{(w-z)^{\beta+1}}dz\,dw + \widetilde F(C_h(u))\,h^{H-\rho-\beta}\\
&+\int_0^{C_h(u)}(u-w)^{-\beta}\int_0^{C_h(w)}\int_{-r}^0
\frac{|\widetilde X_h(C_h(w)+C_h(s))-\widetilde X_h(C_h(z)+C_h(s))|\,ds}{(w-z)^{\beta+1}}dz\,dw\Big).
\end{align*}
\end{lemma}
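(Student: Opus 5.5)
We split $[0,C_h(u)]$ according to the mesh and use the representation \eqref{Backward-1}, in its interpolated form with upper limit $t$. For $v\in[0,C_h(u))$ with $v\in[t_k,t_{k+1})$ we write
\[
\widetilde X_h(C_h(u))-\widetilde X_h(v)
=\int_v^{C_h(u)} b\bigl(C_h^+(w),\widetilde X_h(C_h^+(w)),\widetilde Y(C_h^+(w))\bigr)\,dw
+\int_v^{C_h(u)} \sigma\bigl(C_h(w),\widetilde X_h(C_h(w)),\widetilde Y(C_h(w))\bigr)\,dB^H(w),
\]
so that the left-hand side splits into a drift contribution $A_1$ and a diffusion contribution $A_2$. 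The diffusion term $A_2$ is literally the same object as in the explicit scheme. We therefore invoke the corresponding estimate of \cite{abreu2026euler}, which uses only the fractional Young bound for $\int_v^{C_h(u)}\cdot\,dB^H$. That bound is controlled by $\|B^H\|_{H-\rho}$ times
\[
\int_v^{C_h(u)}|g(w)|(w-v)^{-\beta}\,dw+\int_v^{C_h(u)}\!\int_v^w |g(w)-g(z)|(w-z)^{-\beta-1}\,dz\,dw,
\qquad g(w)=\sigma\bigl(C_h(w),\widetilde X_h(C_h(w)),\widetilde Y(C_h(w))\bigr).
\]
After multiplying by $(u-v)^{-\beta-1}$, integrating in $v$ and applying Fubini, the resulting kernels are of the form $\int_0^w (u-v)^{-\beta-1}(w-v)^{-\beta}\,dv\le C(u-w)^{-2\beta}$, or $(u-w)^{-\beta}$ in the double-integral term.

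For the pieces of $A_2$ we use the following inputs.
\begin{itemize}
\item By linear growth of $\sigma$ and Lemma~\ref{lem:3.3.1}, $|g(w)|$ produces the terms $\int(u-w)^{-2\beta}|\widetilde X_h(w)|$ and $\int(u-w)^{-2\beta}\widetilde F(w)$. Here we replace $|\widetilde X_h(C_h(w))|$ by $\widetilde F(w)$.
\item By the Lipschitz bound in Hypothesis~1, the increment $|g(w)-g(z)|$ is controlled by $|C_h(w)-C_h(z)|$, by $|\widetilde X_h(C_h(w))-\widetilde X_h(C_h(z))|$, and by $|\widetilde Y(C_h(w))-\widetilde Y(C_h(z))|$.
\item The $\widetilde Y$ increment is handled by Lemma~\ref{lem:3.3.2}, which produces the last (memory) term of the statement.
\item The state increment $|\widetilde X_h(C_h(w))-\widetilde X_h(C_h(z))|$ is bounded by $|\widetilde X_h(C_h(w))-\widetilde X_h(z)|+|\widetilde X_h(z)-\widetilde X_h(C_h(z))|$. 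The first part gives the fourth term of the statement. The second part is a one-cell increment, which Lemma~\ref{lem:3.3.3-back}, \eqref{eq:one-step-backward}, bounds by $C(\omega)(1+\widetilde F(z))h^{H-\rho}$.
\item The time term $|C_h(w)-C_h(z)|\le |w-z|+h$ is integrable against $(w-z)^{-\beta-1}$.
\end{itemize}

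For the drift term $A_1$, linear growth gives $|b|\le L_3(1+|\widetilde X_h(C_h^+(w))|+|\widetilde Y(C_h^+(w))|)$. This is the only place where the backward evaluation enters. We use \eqref{eq:forward-node-control} and \eqref{eq:Ytilde-right-bound} to bound it by $C(\omega)(1+\widetilde F(C_h(w)))\le C(\omega)(1+\widetilde F(w))$, the bound being controlled by $\widetilde F$ at the left endpoint; this is why the hypothesis $hL_3\le1/2$ is needed. Then
\[
\int_0^{C_h(u)}(u-v)^{-\beta-1}\int_v^{C_h(u)}(1+\widetilde F(w))\,dw\,dv\le C\int_0^{C_h(u)}(1+\widetilde F(w))(u-w)^{-\beta}\,dw ,
\]
and $(u-w)^{-\beta}\le T^{\beta}(u-w)^{-2\beta}$ absorbs this into the $\widetilde F$ term.

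The main obstacle is the one-cell remainder produced in the diffusion double integral. There the integrand $(w-z)^{-\beta-1}$ is singular on cells where $C_h(z)\ne C_h(w)$ but $w-z<h$, and the local term $h^{H-\rho}(1+\widetilde F)$ must be turned into $\widetilde F(C_h(u))h^{H-\rho-\beta}$. To handle this, we split the $z$-integral into the region $w-z\ge h$, where $\int_{w-z\ge h}(w-z)^{-\beta-1}\,dz\le Ch^{-\beta}$, and the adjacent-cell region. On the adjacent-cell region we use the Hölder bound \eqref{eq:one-step-backward} directly in the form $(z-C_h(z))^{H-\rho}\le (w-z+h)^{H-\rho}$, which is integrable because $H-\rho-\beta>0$. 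Monotonicity of $\widetilde F$ then gives $\widetilde F(z)\le\widetilde F(C_h(u))$, and the $1$ is absorbed into the constant, producing the term $\widetilde F(C_h(u))h^{H-\rho-\beta}$.
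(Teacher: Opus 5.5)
Your decomposition is the paper's own. It uses the representation \eqref{Backward-1} on $[v,C_h(u)]$, the drift bounded through \eqref{eq:forward-node-control} and \eqref{eq:Ytilde-right-bound}, and the fractional estimate for the diffusion followed by Fubini. It then applies Lemma~\ref{lem:3.3.2} to the $\widetilde Y$ increment and splits the state increment through $\widetilde X_h(z)$, using Lemma~\ref{lem:3.3.3-back} for the intra-cell piece. The drift term, the sup term and the Fubini kernels are handled correctly.

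The gap is at the step you single out as the main obstacle: the near-diagonal singularity of $(w-z)^{-\beta-1}$. Two of your claims there are false as written.
\begin{itemize}
\item The bound $|C_h(w)-C_h(z)|\le |w-z|+h$ is not integrable against $(w-z)^{-\beta-1}$, since $\int^{w}h\,(w-z)^{-\beta-1}\,dz=\infty$.
\item On the adjacent cell, $(z-C_h(z))^{H-\rho}\le (w-z+h)^{H-\rho}$ still leaves an integrand of size $h^{H-\rho}(w-z)^{-\beta-1}$ as $w-z\to0$. The condition $H-\rho-\beta>0$ does not help, because $z-C_h(z)$ is not controlled by $w-z$. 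For $z$ just left and $w$ just right of a mesh point, $z-C_h(z)\approx h$ while $w-z\approx 0$.
\end{itemize}
In fact the inner $z$-integral is in general unbounded as $w\downarrow C_h(w)$. So no bound of order $h^{H-\rho-\beta}$ for it can hold uniformly in $w$.

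The missing idea is that $g$ is constant on each mesh cell. Hence $g(w)-g(z)=0$ for $z\in[C_h(w),w]$, and the $z$-integration runs only over $[0,C_h(w))$. You must make this restriction before splitting through $\widetilde X_h(z)$: on the same cell the two pieces do not vanish separately, and their integrals against $(w-z)^{-\beta-1}$ diverge. The restriction is also what produces the upper limit $C_h(w)$ in the fourth and last terms of the statement.

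With it, $\int_0^{C_h(w)}(w-z)^{-\beta-1}dz\le\beta^{-1}(w-C_h(w))^{-\beta}$. By \eqref{eq:one-step-backward} and $z-C_h(z)\le h$, the intra-cell contribution is then at most $C(\omega)\bigl(1+\widetilde F(C_h(u))\bigr)h^{H-\rho}(w-C_h(w))^{-\beta}$. The time contribution is at most $C+Ch\,(w-C_h(w))^{-\beta}$. No split into $w-z\ge h$ and an adjacent cell is needed.

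These factors blow up at the left end of every cell, and only the $w$-integration tames them. For cells with $t_{j+1}<C_h(u)$,
\[
\int_{t_j}^{t_{j+1}}(u-w)^{-\beta}(w-t_j)^{-\beta}\,dw\le (u-t_{j+1})^{-\beta}h^{1-\beta}/(1-\beta),
\]
and summing uses $h(u-t_{j+1})^{-\beta}\le\int_{t_{j+1}}^{t_{j+2}}(u-s)^{-\beta}ds$. On the last cell $[C_h(u)-h,C_h(u)]$ one gets a Beta integral of size $Ch^{1-2\beta}\le Ch^{-\beta}$. Altogether
\[
\int_0^{C_h(u)}(u-w)^{-\beta}(w-C_h(w))^{-\beta}\,dw\le Ch^{-\beta}.
\]
This gives $Ch^{1-\beta}$ for the time terms, which is the paper's $Q_4$ and is absorbed into the constant. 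It gives $C(\omega)h^{H-\rho-\beta}\bigl(1+\widetilde F(C_h(u))\bigr)$ for the intra-cell term, which is the paper's $Q_6$ and exactly what the statement requires.
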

\begin{proof}
From \eqref{Backward-1}, Hypothesis~1, and the fractional estimate for
the integral with respect to $B^H$, for $0\le v\le C_h(u)$ we obtain
\begin{align*}
&|\widetilde X_h(C_h(u))-\widetilde X_h(v)|
\le C(\omega)\Bigg[
\int_v^{C_h(u)}
\bigl(1+|\widetilde X_h(C_h^+(w))|
      +|\widetilde Y(C_h^+(w))|\bigr)\,dw\\
&\quad+\int_v^{C_h(u)}
\frac{1+|\widetilde X_h(C_h(w))|+|\widetilde Y(C_h(w))|}
     {(w-v)^\beta}\,dw\\
&\quad+\int_v^{C_h(u)}\int_v^w
\frac{|C_h(w)-C_h(z)|
+|\widetilde X_h(C_h(w))-\widetilde X_h(C_h(z))|
+|\widetilde Y(C_h(w))-\widetilde Y(C_h(z))|}
{(w-z)^{\beta+1}}\,dz\,dw
\Bigg].
\end{align*}
The first term corresponds to the drift, while the remaining two arise
from the diffusion. Using $1\le T^\beta(w-v)^{-\beta}$, multiplying by
$(u-v)^{-\beta-1}$ and integrating with respect to $v$, we obtain the
same basic decomposition as in Lemma~5 of \cite{abreu2026euler}:
\[
\int_0^{C_h(u)}
\frac{|\widetilde X_h(C_h(u))-\widetilde X_h(v)|}
{(u-v)^{\beta+1}}\,dv
\le C(\omega)\sum_{j=1}^7Q_j(u),
\]
where
\begin{align*}
Q_1(u)&=\int_0^{C_h(u)}
(C_h(u)-v)^{1-\beta}(u-v)^{-\beta-1}\,dv,\\
Q_2(u)&=\int_0^{C_h(u)}(u-v)^{-\beta-1}
\int_v^{C_h(u)}
\frac{|\widetilde X_h(C_h^+(w))|}{(w-v)^\beta}\,dw\,dv,\\
Q_3(u)&=\int_0^{C_h(u)}(u-v)^{-\beta-1}
\int_v^{C_h(u)}
\frac{|\widetilde Y(C_h^+(w))|}{(w-v)^\beta}\,dw\,dv,\\
Q_4(u)&=h\int_0^{C_h(u)}(u-v)^{-\beta-1}
\int_v^{C_h(u)}(w-C_h(w))^{-\beta}\,dw\,dv,\\
Q_5(u)&=\int_0^{C_h(u)}(u-v)^{-\beta-1}
\int_v^{C_h(u)}\int_v^w
\frac{|\widetilde X_h(C_h(w))-\widetilde X_h(z)|}
{(w-z)^{\beta+1}}\,dz\,dw\,dv,\\
Q_6(u)&=\int_0^{C_h(u)}(u-v)^{-\beta-1}
\int_v^{C_h(u)}\int_v^w
\frac{|\widetilde X_h(z)-\widetilde X_h(C_h(z))|}
{(w-z)^{\beta+1}}\,dz\,dw\,dv,\\
Q_7(u)&=\int_0^{C_h(u)}(u-v)^{-\beta-1}
\int_v^{C_h(u)}\int_v^w
\frac{|\widetilde Y(C_h(w))-\widetilde Y(C_h(z))|}
{(w-z)^{\beta+1}}\,dz\,dw\,dv.
\end{align*}
The terms $Q_1,Q_4,Q_5$ and $Q_7$ do not contain the implicit
evaluation of the drift. Hence, they are estimated as in Lemma~5 of
\cite{abreu2026euler}. In particular,
\[
Q_1(u)\le C,\qquad Q_4(u)\le Ch^{1-\beta},
\]
\[
Q_5(u)\le\frac1\beta\int_0^{C_h(u)}(u-w)^{-\beta}
\int_0^{C_h(w)}
\frac{|\widetilde X_h(C_h(w))-\widetilde X_h(z)|}
{(w-z)^{\beta+1}}\,dz\,dw,
\]
and, by Lemma~\ref{lem:3.3.2},
\begin{align*}
Q_7(u)\le{}& C+\frac{K_2}{\beta}\int_0^{C_h(u)}(u-w)^{-\beta}\\
&\quad\times\int_0^{C_h(w)}\int_{-r}^0
\frac{|\widetilde X_h(C_h(w)+C_h(s))
-\widetilde X_h(C_h(z)+C_h(s))|}
{(w-z)^{\beta+1}}\,ds\,dz\,dw.
\end{align*}
The terms $Q_2$ and $Q_3$ contain the specific modification of the
backward-Euler scheme. By Fubini's theorem,
\[
Q_2(u)\le C\int_0^{C_h(u)}
\frac{|\widetilde X_h(C_h^+(w))|}{(u-w)^{2\beta}}\,dw.
\]
Estimate \eqref{eq:forward-node-control} implies
\[
Q_2(u)\le C(\omega)\left[
 u^{1-2\beta}
 +\int_0^{C_h(u)}
 \frac{\widetilde F(C_h(w))}{(u-w)^{2\beta}}\,dw
\right].
\]
Similarly, by \eqref{eq:Ytilde-right-bound},
\[
Q_3(u)\le C\left[
 u^{1-2\beta}
 +\int_0^{C_h(u)}
 \frac{\widetilde F(C_h(w))}{(u-w)^{2\beta}}\,dw
\right].
\]
Finally, $Q_6$ has the same algebraic form as in Lemma~5 of
\cite{abreu2026euler}, but the control of the intra-cell increment must
be carried out with Lemma~\ref{lem:3.3.3-back}. From
\eqref{eq:one-step-backward} and $z-C_h(z)\le h$ we obtain
\[
Q_6(u)\le
C(\omega)h^{H-\rho-\beta}
\bigl(1+\widetilde F(C_h(u))\bigr).
\]
Combining the seven estimates gives the stated inequality.
\end{proof}

\begin{lemma}
\label{lem:3.3.5-back}
For $w\in[0,T]$, with
$I(w):=\int_0^{C_h(w)}(w-z)^{-\beta-1}\int_{-r}^0|\widetilde
X_h(C_h(w)+C_h(s))-\widetilde X_h(C_h(z)+C_h(s))|\,ds\,dz$,
\begin{align*}
I(w) &\le C_1(\omega)\Big(1+\widetilde F(C_h(w))+h^{H-\rho}(w-C_h(w))^{-\beta}
+h(w-C_h(w))^{-\beta}\widetilde F(C_h(w)) \\
&+\int_0^{C_h(w)}(w-z)^{-\beta-1}\!\!\int_{-C_h(z)}^0\!\!|\widetilde
X_h(C_h(w)+C_h(s))-\widetilde X_h(C_h(z)+C_h(s))|\,ds\,dz\Big).
\end{align*}
\end{lemma}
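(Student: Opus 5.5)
We split the inner $s$-integral in $I(w)$ according to whether the shifted argument $C_h(z)+C_h(s)$ lies in the initial segment $[-r,0)$ or in $[0,T]$. Since $C_h(z)+C_h(s)\ge 0$ exactly when $C_h(s)\ge -C_h(z)$, which, up to a single mesh cell, is the range $s\in[-C_h(z),0]$, we write
\[
\int_{-r}^0=\int_{-r}^{-C_h(z)}+\int_{-C_h(z)}^0 .
\]
The second piece, after integrating against $(w-z)^{-\beta-1}$, is exactly the last term in the statement and is kept as is. All the work goes into the first piece, where $\widetilde X_h(C_h(z)+C_h(s))=\phi(C_h(z)+C_h(s))$ is a known initial value, while $C_h(w)+C_h(s)$ may lie on either side of $0$.

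For $s\in[-r,-C_h(z)]$ we compare both values with the initial datum. We insert $\phi(C_h(w)+C_h(s))$, extended by $\widetilde X_h$ when the argument is positive, and split the difference as follows.

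\textbf{Case $C_h(w)+C_h(s)\le0$.} The difference is $|\phi(a)-\phi(b)|$ with $|a-b|\le |C_h(w)-C_h(z)|$. By Hypothesis~$\phi$ it is bounded by $L_\rho|C_h(w)-C_h(z)|^{H-\rho}$.

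\textbf{Case $C_h(w)+C_h(s)>0$.} Here $C_h(w)+C_h(s)\in(0,C_h(w)-C_h(z)]$. We use
\[
|\widetilde X_h(t)-\phi(b)|\le|\widetilde X_h(t)-\widetilde X_h(0)|+|\phi(0)-\phi(b)|,
\]
with $t$ a mesh point in $(0,C_h(w)-C_h(z)]$. We bound $|\widetilde X_h(t)-\widetilde X_h(0)|$ by the one-step estimate of Lemma~\ref{lem:3.3.3-back}, summed over the at most $(C_h(w)-C_h(z))/h$ cells, or better directly from \eqref{Backward-1}: the drift part gives $C(\omega)(1+\widetilde F(C_h(w)))\,t$ by \eqref{eq:forward-node-control} and Lemma~\ref{lem:3.3.1}. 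For the stochastic part we do not redo the fractional estimate. Since the diffusion integrand is piecewise constant, the sum $\sum\sigma_j\Delta B^H(t_j)$ is bounded crudely by $C(\omega)(1+\widetilde F(C_h(w)))$ times the number of cells times $h^{H-\rho}$. That crude bound is too weak, however, so instead we simply use $|\widetilde X_h(t)|+|\phi(b)|\le 2(1+\widetilde F(C_h(w)))$. The measure of the set of such $s$ is at most $C_h(w)-C_h(z)$, which supplies a factor $|C_h(w)-C_h(z)|$.

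In both cases the inner integral is then bounded by
\[
C(\omega)\bigl(1+\widetilde F(C_h(w))\bigr)\bigl(|C_h(w)-C_h(z)|^{H-\rho}+|C_h(w)-C_h(z)|\bigr).
\]

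It remains to integrate against $(w-z)^{-\beta-1}$ over $z\in[0,C_h(w)]$. We split $z$ into $z\le C_h(w)-h$ and $z\in(C_h(w)-h,C_h(w)]$. In the first region $|C_h(w)-C_h(z)|\le 2(w-z)$, so the integrand is at most $C(w-z)^{H-\rho-\beta-1}$, which is integrable since $H-\rho>\beta$. This produces the term $C(\omega)(1+\widetilde F(C_h(w)))$. In the last cell, $C_h(z)$ differs from $C_h(w)$ by at most $h$, and $(w-z)\ge w-C_h(w)$, so the integral is bounded by $h^{H-\rho}\int_{C_h(w)-h}^{C_h(w)}(w-z)^{-\beta-1}dz\le Ch^{H-\rho}(w-C_h(w))^{-\beta}$. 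The Lipschitz part likewise gives $h(w-C_h(w))^{-\beta}\widetilde F(C_h(w))$. These are exactly the two singular terms in the statement.

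The main obstacle is the bookkeeping at the boundary $s\approx -C_h(z)$, where $C_h(s)$ and $s$ differ by a cell. This off-by-one cell must be absorbed either into the retained last term or into the $h$-terms. It is also unclear whether the simple bound by $\widetilde F$ on the straddling region suffices without generating a nonintegrable singularity; the factor $|C_h(w)-C_h(z)|$ from the measure of that region is what should save it.
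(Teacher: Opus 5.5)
Your argument is correct and is essentially the paper's. The paper cuts $[-r,0]$ into $[-r,-C_h(w)]$, $[-C_h(w),-z]$, $[-z,-C_h(z)]$ and $[-C_h(z),0]$: it uses Hölder continuity of $\phi$ where both arguments are initial, bounds the pieces where the arguments straddle $0$ by $\widetilde F(C_h(w))$ times their length, and keeps the last piece, which is your two-case split in refined form.

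Your off-by-one worry is unfounded, since $C_h(z)+C_h(s)\ge0$ holds exactly when $s\ge -C_h(z)$, because $C_h(z)$ is a mesh point. Do keep the deterministic Case~A bound $L_\rho|C_h(w)-C_h(z)|^{H-\rho}$ separate from the $\widetilde F$-weighted Case~B bound: lumping them under a common factor $1+\widetilde F(C_h(w))$, as your displayed inner bound does, would produce a term $h^{H-\rho}\widetilde F(C_h(w))(w-C_h(w))^{-\beta}$ that is not in the statement.
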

\begin{proof}
The estimate uses the decomposition of the delay interval introduced
in Lemma~6 of \cite{abreu2026euler}. We recall it because this
partition allows us to distinguish the different regimes of the
shifted arguments. For $0\le z\le C_h(w)$,
\[
-r\le -C_h(w)\le -z\le -C_h(z)\le0,
\]
and hence
\[
[-r,0]=[-r,-C_h(w)]\cup[-C_h(w),-z]
\cup[-z,-C_h(z)]\cup[-C_h(z),0].
\]
On $[-r,-C_h(w)]$, both arguments $C_h(w)+C_h(s)$ and
$C_h(z)+C_h(s)$ belong to the initial interval. The $(H-\rho)$-Hölder
regularity of $\phi$ gives
\[
|\widetilde X_h(C_h(w)+C_h(s))
-\widetilde X_h(C_h(z)+C_h(s))|
\le C|C_h(w)-C_h(z)|^{H-\rho},
\]
and, after integrating with respect to $z$, this yields a bounded
contribution and the term
\[
C(\omega)h^{H-\rho}(w-C_h(w))^{-\beta}.
\]
On $[-C_h(w),-z]$, the shifted arguments may lie on either side of the
initial time. We introduce $\widetilde X_h(0)=\phi(0)$ to separate the
contributions associated with the initial condition and with the
numerical path. Both are controlled by the regularity of $\phi$ and by
$\widetilde F(C_h(w))$.

The interval $[-z,-C_h(z)]$ corresponds to the transition generated by
the mesh projection and has length
\[
z-C_h(z)\le h.
\]
The same size estimate then produces the additional term
\[
h(w-C_h(w))^{-\beta}\widetilde F(C_h(w)).
\]
Finally, on $[-C_h(z),0]$ both arguments belong to the numerical part
of the path. This contribution is not estimated crudely, but is kept
as
\[
\int_0^{C_h(w)}(w-z)^{-\beta-1}
\int_{-C_h(z)}^0
|\widetilde X_h(C_h(w)+C_h(s))
-\widetilde X_h(C_h(z)+C_h(s))|\,ds\,dz.
\]
The estimates of the first three regions are those of Lemma~6 of
\cite{abreu2026euler}; the fourth region is kept for the subsequent
recursive fractional control. Summing the four contributions gives the
inequality of the lemma.
\end{proof}

The following estimate combines the previous controls for the delayed
increments of the scheme.

\begin{lemma}
\label{lem:3.3.6-back}
\begin{align*}
&\int_0^{C_h(w)}\int_{-C_h(z)}^0\frac{|\widetilde
X_h(C_h(w)+C_h(s))-\widetilde X_h(C_h(z)+C_h(s))|}{(w-z)^{\beta+1}}\,ds\,dz \\
&\le C(\omega)\Big(1+h^{1-\beta}+h^{H-\rho-\beta}+h^{H-\rho-\beta}\widetilde
F(w) +h^{1-\beta}\widetilde F(C_h(w))
\end{align*}
\begin{align*}
&+\int_0^{C_h(w)}(w-u)^{-\beta}\widetilde F(C_h(u))\,du \\
&+\int_0^{C_h(w)}\frac{\widetilde F(C_h(z))}{(w-z)^{2\beta}}\,dz \\
&+\int_0^{C_h(w)}(w-u)^{-\beta}
\int_0^{C_h(u)}
\frac{|\widetilde X_h(C_h(u))-\widetilde X_h(v)|}{(u-v)^{\beta+1}}\,dv\,du \\
&+\int_0^{C_h(w)}(w-u)^{-\beta}
\int_0^{C_h(u)}(u-v)^{-\beta-1} \\
&\qquad\times\int_{-C_h(v)}^0
|\widetilde X_h(C_h(u)+C_h(s))-\widetilde X_h(C_h(v)+C_h(s))|\,ds\,dv\,du\Big).
\end{align*}
\end{lemma}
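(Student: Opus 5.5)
The estimate follows the pattern of Lemma~7 of \cite{abreu2026euler}. We treat the shifted differences pointwise in $s$ and then integrate. Fix $s\in[-C_h(z),0]$ and $0\le z\le C_h(w)$. Both arguments $a:=C_h(w)+C_h(s)$ and $a':=C_h(z)+C_h(s)$ are mesh points. Since $s\ge -C_h(z)$, we have $C_h(s)\ge -C_h(z)$, so both arguments lie in $[0,T]$. Hence the exact identity \eqref{Backward-1} applies to both, and subtracting gives
\[
\widetilde X_h(a)-\widetilde X_h(a')=\int_{a'}^{a} b\bigl(C_h^+(\cdot),\ldots\bigr)\,du+\int_{a'}^{a}\sigma\bigl(C_h(\cdot),\ldots\bigr)\,dB^H(u).
\]

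We then bound this difference exactly as in the first display of the proof of Lemma~\ref{lem:3.3.4-back}, using Hypothesis~1 and the fractional Young estimate. The drift part is bounded by $\int_{a'}^{a}(1+|\widetilde X_h(C_h^+(u))|+|\widetilde Y(C_h^+(u))|)\,du$. By \eqref{eq:forward-node-control} and \eqref{eq:Ytilde-right-bound}, this is at most $C(\omega)\int_{a'}^a(1+\widetilde F(C_h(u)))\,du$. The diffusion part gives three pieces.
\begin{itemize}
\item[(i)] A zeroth-order term $\int_{a'}^a (1+\widetilde F(C_h(u)))(u-a')^{-\beta}du$.
\item[(ii)] A self-increment term with kernel $(u-v)^{-\beta-1}$, which we split as $|\widetilde X_h(C_h(u))-\widetilde X_h(v)|+|\widetilde X_h(v)-\widetilde X_h(C_h(v))|$.
\item[(iii)] A memory increment term, controlled through Lemma~\ref{lem:3.3.2}.
\end{itemize}
The second summand in (ii) is handled by \eqref{eq:one-step-backward}, exactly as $Q_6$ was. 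This produces the terms $h^{H-\rho-\beta}\widetilde F(w)$ and $h^{H-\rho-\beta}$. The time-mismatch piece $|C_h(u)-C_h(v)|$ produces $h^{1-\beta}$ and $h^{1-\beta}\widetilde F(C_h(w))$, as in $Q_4$.

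The key observation is that $a-a'=C_h(w)-C_h(z)$ does not depend on $s$. The increments are therefore translates of increments over $[C_h(z),C_h(w)]$. We multiply by $(w-z)^{-\beta-1}$, integrate in $s$ over an interval of length at most $r$, and then integrate in $z$. The triple integrals collapse by Fubini together with the elementary bound
\[
\int_0^{w'}(w-z)^{-\beta-1}(w'-z)^{1-\beta}\cdots\,dz\le C(w-w')^{-\beta}\cdots ,
\]
which was already used for $Q_5$. This reduces the $\widetilde F$-terms to $\int_0^{C_h(w)}(w-u)^{-\beta}\widetilde F(C_h(u))\,du$ and $\int_0^{C_h(w)}\widetilde F(C_h(z))(w-z)^{-2\beta}dz$. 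It reduces (ii) to the fractional-seminorm term with outer weight $(w-u)^{-\beta}$. For (iii), Lemma~\ref{lem:3.3.2} yields a constant from $rK_1|u-v|$ together with a delayed increment. We decompose that delayed increment over the regions $[-r,-C_h(v)]$ and $[-C_h(v),0]$ as in Lemma~\ref{lem:3.3.5-back}. The first region is absorbed into the constant and $\widetilde F$-terms through Hypothesis~$\phi$. The second region is kept as the last term of the statement.

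The main obstacle is step (ii)--(iii) after the shift by $C_h(s)$. The increments live on translated intervals $[a',a]$ rather than $[C_h(z),C_h(w)]$, so the singular weights must be rewritten in the original variables without losing the $(w-u)^{-\beta}$ structure. To do this we substitute $u\mapsto u-C_h(s)$, and we use that the translation is by a mesh point, so $C_h(u+C_h(s))=C_h(u)+C_h(s)$. After this substitution, every weight depends only on differences, and the $s$-integral is taken last, where it is bounded by $r$ times the supremum in $s$. The implicit-drift contributions need only \eqref{eq:forward-node-control}, and this is where the condition $hL_3\le 1/2$ enters.
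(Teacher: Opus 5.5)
\textbf{There is a genuine gap in the Fubini step.} Your set-up is sound: both shifted arguments are mesh points in $[0,T]$, so \eqref{Backward-1} applies to each. But the step ``the triple integrals collapse by Fubini together with the elementary bound used for $Q_5$'' fails.

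In $Q_5$ the inner integrals start at the weight variable itself. After Fubini, $(w-z)^{-\beta-1}$ is therefore integrated only over $z\le u$, which gives $\beta^{-1}(w-u)^{-\beta}$. You instead apply the fractional estimate from the \emph{projected} point $C_h(z)+C_h(s)$. After undoing the shift, the only constraint is $C_h(z)\le u$, i.e.\ $z<C_h(u)+h$, so $z$ can lie to the right of $u$. For $u$ in the last cell below $C_h(w)$, the $z$-integral is then of size $(w-C_h(w))^{-\beta}$ rather than $(w-u)^{-\beta}$. Your terms (i) and (ii) suffer in the same way: for $z\in[C_h(w)-h,C_h(w))$ the increment runs over a full cell ($a-a'=h$), however close $z$ is to $C_h(w)$.

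This is why the paper does not subtract the two mesh values directly. It inserts the non-mesh point $z+C_h(s)$ and applies the fractional estimate on $[z+C_h(s),C_h(w)+C_h(s)]$. Its lower endpoint is aligned with $z$, so the $Q_5$-type collapse is legitimate. The misalignment is confined to the single intra-cell increment $|\widetilde X_h(z+C_h(s))-\widetilde X_h(C_h(z)+C_h(s))|$, which is controlled by \eqref{eq:one-step-backward}.

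\textbf{A second problem is the treatment of the $s$-integral.} Bounding it by $r$ times a supremum in $s$ cannot return the unshifted convolutions on the right-hand side. In the shifted variable $u'=u+C_h(s)$ the outer weight is $(w+C_h(s)-u')^{-\beta}$, which is not dominated by $C(w-u')^{-\beta}$. The shift has to be averaged out by actually integrating in $s$.

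\textbf{The singularity is intrinsic to the statement.} The singular factor you run into is not an artefact of your route. In the paper's decomposition, the intra-cell bound $C(\omega)(1+\widetilde F(C_h(z)))h^{H-\rho}$, integrated against $(w-z)^{-\beta-1}$ over $[0,C_h(w)]$, produces $C(\omega)\beta^{-1}h^{H-\rho}(1+\widetilde F(C_h(w)))(w-C_h(w))^{-\beta}$. This term is not listed in the statement.

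In fact the left-hand side is $+\infty$ at every mesh point $w=t_n$ with $n\ge2$. For $z\in(t_{n-1},t_n)$ and $s\in[-h,0)$, the numerator equals $|\widetilde X_h(t_{n-1})-\widetilde X_h(t_{n-2})|$, independent of $z$. This is generically nonzero; for example it equals $|\Delta B^H(t_{n-2})|$ when $b\equiv0$, $\sigma\equiv1$. Meanwhile $\int_{t_{n-1}}^{t_n}(t_n-z)^{-\beta-1}\,dz=\infty$, whereas every term on the right-hand side is finite there.

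So the inequality can only hold with an additional term $h^{H-\rho}(w-C_h(w))^{-\beta}(1+\widetilde F(C_h(w)))$, of the type already present in Lemma~\ref{lem:3.3.5-back}. That term is harmless only after a further integration in $w$ against a weakly singular kernel, where it contributes terms of order $h^{H-\rho-\beta}$. Your argument should target this corrected statement, using the split at $z+C_h(s)$ and integrating, not maximizing, over $s$.
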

\begin{proof}
We write $|\widetilde X_h(C_h(w)+C_h(s))-\widetilde
X_h(C_h(z)+C_h(s))|\le|\widetilde X_h(C_h(w)+C_h(s))-\widetilde
X_h(z+C_h(s))|+|\widetilde X_h(z+C_h(s))-\widetilde
X_h(C_h(z)+C_h(s))|$. Since $C_h(z)+C_h(s)=C_h(z+C_h(s))$, the second
term is bounded using Lemma~\ref{lem:3.3.3-back}. Indeed,
$C_h(z+C_h(s))=C_h(z)+C_h(s)$ and $C_h(z+C_h(s))\le C_h(z)$, so
\[
|\widetilde X_h(z+C_h(s))-\widetilde X_h(C_h(z)+C_h(s))|
\le C(\omega)\big(1+\widetilde F(C_h(z))\big)h^{H-\rho}.
\]
This bound is combined with \eqref{eq:forward-node-control} to control
the values evaluated at the right endpoint of each cell. For the first
term we apply the fractional estimate for the integral with respect to
$B^H$ on the interval $[z+C_h(s),C_h(w)+C_h(s)]$:
\begin{align*}
&|\widetilde X_h(C_h(w)+C_h(s))-\widetilde X_h(z+C_h(s))|\\
&\le \Big|\int_{z+C_h(s)}^{C_h(w)+C_h(s)}
 b\big(C_h^+(v),\widetilde X_h(C_h^+(v)),\widetilde Y(C_h^+(v))\big)\,dv\Big|\\
&\quad +C(\omega)\int_{z+C_h(s)}^{C_h(w)+C_h(s)}
\frac{1+|\widetilde X_h(C_h(v))|+|\widetilde Y(C_h(v))|}
{(v-(z+C_h(s)))^\beta}\,dv\\
&\quad +C(\omega)\int_{z+C_h(s)}^{C_h(w)+C_h(s)}
\int_{z+C_h(s)}^{u}
\frac{|C_h(u)-C_h(v)|}{(u-v)^{\beta+1}}\,dv\,du\\
&\quad +C(\omega)\int_{z+C_h(s)}^{C_h(w)+C_h(s)}
\int_{z+C_h(s)}^{u}
\frac{|\widetilde X_h(C_h(u))-\widetilde X_h(C_h(v))|}{(u-v)^{\beta+1}}\,dv\,du\\
&\quad +C(\omega)\int_{z+C_h(s)}^{C_h(w)+C_h(s)}
\int_{z+C_h(s)}^{u}
\frac{|\widetilde Y(C_h(u))-\widetilde Y(C_h(v))|}{(u-v)^{\beta+1}}\,dv\,du.
\end{align*}
By \eqref{eq:forward-node-control} and \eqref{eq:Ytilde-right-bound},
\[
|b(C_h^+(v),\widetilde X_h(C_h^+(v)),\widetilde Y(C_h^+(v)))|
\le C(\omega)\bigl(1+\widetilde F(C_h(v))\bigr).
\]
Hence, the drift term satisfies an estimate in terms of
$1+\widetilde F(C_h(v))$. Using also
$1\le T^\beta(v-(z+C_h(s)))^{-\beta}$, the decomposition into the
contributions $R_1,\ldots,R_6$ is the same as in Lemma~7 of
\cite{abreu2026euler}. The terms $R_1,R_3$ and $R_4$ are estimated by
the same singular integrations. Contribution $R_2$ must be kept in the
form
\[
R_2(w)\le C\int_0^{C_h(w)}
\frac{\widetilde F(C_h(z))}{(w-z)^{2\beta}}\,dz.
\]
The backward adaptation appears in $R_5$, where the values at the
right endpoint are controlled by \eqref{eq:forward-node-control}, and
in the intra-cell increments, for which
Lemma~\ref{lem:3.3.3-back} is used. Finally, $R_6$ is estimated by
combining Lemma~\ref{lem:3.3.2} with the decomposition of
Lemma~\ref{lem:3.3.5-back}; in particular, the convolution
\[
\int_0^{C_h(w)}(w-u)^{-\beta}\widetilde F(C_h(u))\,du
\]
is retained. These modifications, together with the estimates
transferred from Lemma~7 of \cite{abreu2026euler}, give the stated
bound.
\end{proof}

\begin{theorem}
\label{thm:apriori-backward}
Suppose Hypotheses~$\phi$, 1 and 3 hold, and let
\[
\beta\in(1-H,1/2),\qquad 0<\rho<H-\beta.
\]
There exists a set $\Omega_\rho^{\rm ap}\subset\Omega$ with
$\mathbb P(\Omega_\rho^{\rm ap})=1$ such that, for every
$\omega\in\Omega_\rho^{\rm ap}$, there is
$h_\rho^{\rm ap}(\omega)>0$ and a finite constant $C_\rho(\omega)$,
independent of $h$, for which, whenever
$0<h<h_\rho^{\rm ap}(\omega)$,
\[
\sup_{t\in[-r,T]}|\widetilde X_h(t)|\le C_\rho(\omega).
\]
Moreover, the auxiliary interpolation satisfies
\[
|\widetilde X_h(t)-\widetilde X_h(s)|
\le C_\rho(\omega)|t-s|^{H-\rho},
\qquad -r\le s<t\le T.
\]
\end{theorem}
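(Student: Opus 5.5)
We will prove the a priori bound by closing a coupled system of integral inequalities for two quantities: the sup-norm $\widetilde F(t)$ and the fractional seminorm
\[
\Phi(t):=\int_0^{C_h(t)}\frac{|\widetilde X_h(C_h(t))-\widetilde X_h(v)|}{(t-v)^{\beta+1}}\,dv ,
\]
together with the delayed analogue
\[
\Psi(t):=\int_0^{C_h(t)}(t-z)^{-\beta-1}\int_{-C_h(z)}^0|\widetilde X_h(C_h(t)+C_h(s))-\widetilde X_h(C_h(z)+C_h(s))|\,ds\,dz .
\]
We work on the full-probability set on which $B^H$ is $(H-\rho)$-Hölder (and $\beta'$-Hölder for some $\beta'>1-\beta$, so the fractional Young estimate applies). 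We take $h^{\rm ap}_\rho$ so small that $hL_2<1$ and $hL_3\le 1/2$; then Proposition~\ref{prop:existencia-implicito}, Lemma~\ref{lem:3.3.3-back} and representation \eqref{Backward-1} are available.

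\textbf{Step 1 (sup bound in terms of $\Phi,\Psi$).} From \eqref{Backward-1} with upper limit $t$, we apply the fractional estimate for the $dB^H$ integral as in the proof of Lemma~\ref{lem:3.3.4-back}. The drift is controlled through \eqref{eq:forward-node-control} and \eqref{eq:Ytilde-right-bound}. The increments of $\widetilde Y(C_h(\cdot))$ are controlled through Lemmas~\ref{lem:3.3.2}, \ref{lem:3.3.5-back} and \ref{lem:3.3.3-back}. This should give
\[
\widetilde F(t)\le C(\omega)\Big(1+\int_0^t (t-w)^{-2\beta}\widetilde F(w)\,dw+\int_0^t(t-w)^{-\beta}\big(\Phi(w)+\Psi(w)\big)\,dw\Big)+C(\omega)h^{H-\rho-\beta}\widetilde F(t).
\]

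\textbf{Step 2 (bounds on $\Phi$ and $\Psi$).} Lemma~\ref{lem:3.3.4-back} already bounds $\Phi(u)$ by $1$, by weakly singular convolutions of $\widetilde F$, $\Phi$ and $\Psi$ with kernels $(u-w)^{-2\beta}$ and $(u-w)^{-\beta}$, and by the remainder $h^{H-\rho-\beta}\widetilde F(C_h(u))$. Lemma~\ref{lem:3.3.6-back} gives the analogous bound for $\Psi(w)$, with remainders $h^{1-\beta}\widetilde F$ and $h^{H-\rho-\beta}\widetilde F$.

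\textbf{Step 3 (closing the system).} Set $g(t):=\widetilde F(t)+\sup_{w\le t}\Phi(w)+\sup_{w\le t}\Psi(w)$. Adding the three inequalities, and using that every convolution term is nondecreasing once the integrand is replaced by its running supremum, we obtain
\[
g(t)\le C(\omega)\big(1+h^{1-\beta}+h^{H-\rho-\beta}\big)g(t)+C(\omega)\Big(1+\int_0^t (t-w)^{-2\beta}g(w)\,dw\Big),
\]
since $(t-w)^{-\beta}\le T^{\beta}(t-w)^{-2\beta}$. We shrink $h^{\rm ap}_\rho(\omega)$ further so that $C(\omega)(h^{1-\beta}+h^{H-\rho-\beta})\le 1/2$. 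The mesh-dependent remainders can then be absorbed on the left. Because $2\beta<1$, the Gronwall inequality with weakly singular kernel \cite{dixonmckee1986} yields $g(T)\le C_\rho(\omega)$, uniformly in $h$. Measurability of $h^{\rm ap}_\rho$ follows because it is an explicit function of the Hölder constant of $B^H$.

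\textbf{Main obstacle.} The delicate point is that the absorption requires every quantity to be finite for each fixed $h$, which holds trivially since there are finitely many steps. Consistency is a further issue: the right-endpoint values appear through $\widetilde F(C_h(\cdot))$, or through $\widetilde F(w)$ in Lemma~\ref{lem:3.3.6-back}, and never at times beyond $t$. This is where \eqref{eq:forward-node-control} is essential, and I would check each term for it first.

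\textbf{Hölder estimate.} With $\widetilde F(T)\le C_\rho(\omega)$ in hand, we argue in three cases. For $s,t$ in the same cell, \eqref{eq:one-step-backward} gives the bound $C(t-s)^{H-\rho}$. For mesh points, \eqref{Backward-1} and the fractional estimate with bounded integrands give $C|t-s|^{H-\rho}$, using the $(H-\rho)$-regularity of $B^H$ and the now-bounded $\Phi$. General $s<t$ are handled through $C_h(s)$ and $C_h(t)$. On $[-r,0]$ we use Hypothesis~$\phi$, and mixed signs are handled as in the Remark after Theorem~\ref{thm:4.1.1}.
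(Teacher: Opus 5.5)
Your proposal is correct and is essentially the paper's proof. The paper also bundles $\widetilde F$, the increment seminorm and its delayed analogue into one functional $\varphi_h$; it bounds the pieces via \eqref{Backward-1} and Lemmas~\ref{lem:3.3.4-back}--\ref{lem:3.3.6-back}, absorbs the $h^{H-\rho-\beta}$ remainder for small $h$, closes with the weakly singular Gronwall inequality of \cite{dixonmckee1986} (kernel $u^{-2\beta}+(t-u)^{-2\beta}$ in place of your running suprema), and reads off the Hölder bound from the integral representation. Your requirement that $B^H$ be Hölder of some order above $1-\beta$ is a genuine precision, since $H-\rho$ itself may lie below $1-\beta$.

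Two small slips need fixing. In your Step~3 display the constant $1$ belongs to the free term, not to the coefficient of $g(t)$; as written nothing could be absorbed. Also, \eqref{eq:one-step-backward} only measures increments from the left node, so for the Hölder step it is cleaner to use \eqref{Backward-1} with arbitrary upper limits, as the paper does, or to route cross-cell increments through $C_h^+(s)$ and $C_h(t)$ rather than $C_h(s)$.
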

\begin{proof}
Recall that
\[
\widetilde F(t)=\sup_{u\in[-r,t]}|\widetilde X_h(u)|.
\]
For $t\in[0,T]$, define the functional
\begin{align}
\label{eq:phi-apriori-back}
\varphi_h(t):={}&\widetilde F(t)
+\int_0^{C_h(t)}
\frac{|\widetilde X_h(C_h(t))-\widetilde X_h(v)|}
{(t-v)^{\beta+1}}\,dv\nonumber\\
&+\int_0^{C_h(t)}\int_{-C_h(v)}^0
\frac{|\widetilde X_h(C_h(t)+C_h(s))
-\widetilde X_h(C_h(v)+C_h(s))|}
{(t-v)^{\beta+1}}\,ds\,dv.
\end{align}
Set
\[
\Omega_\rho^{\rm ap}
:=\left\{\omega:
[B^H(\omega)]_{H-\rho;[0,T]}<\infty\right\}.
\]
The Hölder regularity of fractional Brownian motion gives
$\mathbb P(\Omega_\rho^{\rm ap})=1$. Fix
$\omega\in\Omega_\rho^{\rm ap}$ for the remainder of the proof. All
constants produced by the fractional integral estimates are then finite
pathwise constants depending on $\omega$ and $\rho$, but not on $h$.

The integral representation of the scheme and the fractional estimate
for the integral with respect to $B^H$ give a bounded drift
contribution, via \eqref{eq:forward-node-control},
\eqref{eq:Ytilde-right-bound} and the linear growth of $b$, through
\[
|b(C_h^+(u),\widetilde X_h(C_h^+(u)),
\widetilde Y(C_h^+(u)))|
\le C(\omega)\bigl(1+\widetilde F(C_h(u))\bigr).
\]
Consequently,
\begin{equation}
\label{eq:deriva-apriori-back}
\int_0^t
|b(C_h^+(u),\widetilde X_h(C_h^+(u)),
\widetilde Y(C_h^+(u)))|\,du
\le C(\omega)\left(1+
\int_0^t u^{-\beta}\widetilde F(u)\,du\right).
\end{equation}
The diffusion continues to be evaluated at $C_h$. Applying the same
fractional decomposition as in \cite{abreu2026euler}, together with
Lemmas~\ref{lem:3.3.1}-\ref{lem:3.3.6-back}, and reordering the
integrals via Fubini's theorem, we obtain
\begin{equation}
\label{eq:F-apriori-back}
\widetilde F(t)
\le C_0(\omega)\left[
1+h^{H-\rho-\beta}\widetilde F(t)
+\int_0^t
\bigl(u^{-2\beta}+(t-u)^{-2\beta}\bigr)
\varphi_h(u)\,du
\right].
\end{equation}
This estimate uses $x^{-\beta}\le T^{\beta}x^{-2\beta}$ for
$0<x\le T$ and $2\beta<1$. In particular, the contribution associated
with term $R_2$ of Lemma~\ref{lem:3.3.6-back} satisfies
\[
\int_0^t\int_0^{C_h(u)}
\frac{\widetilde F(C_h(z))}{(u-z)^{2\beta}}\,dz\,du
\le C\int_0^t u^{-2\beta}\widetilde F(u)\,du,
\]
after interchanging the order of integration.

Lemma~\ref{lem:3.3.4-back} gives
\begin{align}
\label{eq:increment-apriori-back}
&\int_0^{C_h(t)}
\frac{|\widetilde X_h(C_h(t))-\widetilde X_h(v)|}
{(t-v)^{\beta+1}}\,dv\nonumber\\
&\hspace{1cm}\le C_0(\omega)\left[
1+h^{H-\rho-\beta}\widetilde F(t)
+\int_0^t (t-u)^{-2\beta}\varphi_h(u)\,du
\right].
\end{align}
On the other hand, Lemma~\ref{lem:3.3.6-back}, keeping the convolution
in $\widetilde F(C_h(u))$ and using $h^{1-\beta}\le h^{H-\rho-\beta}$
for $0<h\le1$, gives
\begin{align}
\label{eq:delay-apriori-back}
&\int_0^{C_h(t)}\int_{-C_h(v)}^0
\frac{|\widetilde X_h(C_h(t)+C_h(s))
-\widetilde X_h(C_h(v)+C_h(s))|}
{(t-v)^{\beta+1}}\,ds\,dv\nonumber\\
&\hspace{1cm}\le C_0(\omega)\left[
1+h^{H-\rho-\beta}\widetilde F(t)
+\int_0^t (t-u)^{-2\beta}\varphi_h(u)\,du
\right].
\end{align}
Adding \eqref{eq:F-apriori-back}, \eqref{eq:increment-apriori-back} and
\eqref{eq:delay-apriori-back}, and using
$\widetilde F(t)\le\varphi_h(t)$,
\begin{equation}
\label{eq:phi-gronwall-back}
\varphi_h(t)
\le C_1(\omega)\left[
1+h^{H-\rho-\beta}\varphi_h(t)
+\int_0^t
\bigl(u^{-2\beta}+(t-u)^{-2\beta}\bigr)
\varphi_h(u)\,du
\right].
\end{equation}
Since $H-\rho-\beta>0$, for the fixed
$\omega\in\Omega_\rho^{\rm ap}$ we may choose
$h_\rho^{\rm ap}(\omega)>0$ so that
\[
h_\rho^{\rm ap}(\omega)L_2<1,\qquad
h_\rho^{\rm ap}(\omega)L_3\le\frac12,\qquad
C_1(\omega)\bigl(h_\rho^{\rm ap}(\omega)\bigr)^{H-\rho-\beta}\le\frac12.
\]
For $0<h<h_\rho^{\rm ap}(\omega)$, the second term
on the right-hand side of \eqref{eq:phi-gronwall-back} is absorbed
into the left-hand side, leaving
\[
\varphi_h(t)
\le C_2(\omega)\left[
1+\int_0^t
\bigl(u^{-2\beta}+(t-u)^{-2\beta}\bigr)
\varphi_h(u)\,du
\right].
\]
Since $2\beta<1$, the functions $u\mapsto u^{-2\beta}$ and
$u\mapsto (t-u)^{-2\beta}$ are integrable at their respective
endpoints, uniformly for $t\in[0,T]$. Therefore, the continuous-case
extension of the Gronwall inequality for weakly singular kernels
established in \cite{dixonmckee1986}, and also used in
\cite{abreu2026euler}, is applicable and gives
\[
\sup_{t\in[0,T]}\varphi_h(t)\le C(\omega),
\]
and, by \eqref{eq:phi-apriori-back},
\[
\sup_{t\in[-r,T]}|\widetilde X_h(t)|\le C(\omega).
\]
It remains to establish the Hölder regularity. For $0\le s<t\le T$,
the integral representation of the interpolation gives
\begin{align*}
\widetilde X_h(t)-\widetilde X_h(s)
={}&\int_s^t b(C_h^+(u),\widetilde X_h(C_h^+(u)),
\widetilde Y(C_h^+(u)))\,du\\
&+\int_s^t\sigma(C_h(u),\widetilde X_h(C_h(u)),
\widetilde Y(C_h(u)))\,dB^H(u).
\end{align*}
The first integral is bounded by $C(\omega)(t-s)$. For the second, the
fractional estimate, together with \eqref{eq:phi-apriori-back} and
$\varphi_h(u)\le C(\omega)$, gives
\[
\left|\int_s^t\sigma(C_h(u),\widetilde X_h(C_h(u)),
\widetilde Y(C_h(u)))\,dB^H(u)\right|
\le C(\omega)(t-s)^{H-\rho}.
\]
Since $H-\rho<1$,
\[
(t-s)\le T^{1-H+\rho}(t-s)^{H-\rho},
\]
and we conclude
\[
|\widetilde X_h(t)-\widetilde X_h(s)|
\le C(\omega)(t-s)^{H-\rho},
\qquad 0\le s<t\le T.
\]
To extend the estimate to all of $[-r,T]$, observe that
$\widetilde X_h=\phi$ on $[-r,0]$. By Hypothesis~$\phi$,
\[
|\widetilde X_h(t)-\widetilde X_h(s)|
\le L_\rho|t-s|^{H-\rho},
\qquad -r\le s<t\le0.
\]
Finally, if $-r\le s<0<t\le T$, then
\begin{align*}
|\widetilde X_h(t)-\widetilde X_h(s)|
&\le |\widetilde X_h(t)-\widetilde X_h(0)|
   +|\phi(0)-\phi(s)|\\
&\le C(\omega)t^{H-\rho}+L_\rho|s|^{H-\rho}\\
&\le C(\omega)|t-s|^{H-\rho}.
\end{align*}
This completes the estimate on $[-r,T]$.
\end{proof}

\begin{corollary}
\label{cor:3.3.1-back}
Under the hypotheses of Theorem~\ref{thm:apriori-backward}, for
$0\le v\le u\le T$,
\[
|Y(u)-Y(v)|+|\widetilde Y(u)-\widetilde Y(v)|
\le C(\omega)|u-v|^{H-\rho}.
\]
\end{corollary}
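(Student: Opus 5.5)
The estimate splits into two independent parts, one for $Y$ and one for $\widetilde Y$. Both rest on the Lipschitz-type bounds of Lemma~\ref{lem:3.1.1} and Lemma~\ref{lem:3.3.2}, combined with the uniform H\"older regularity of the underlying paths. Throughout we fix $\omega$ in the intersection of the full-probability set of Theorem~\ref{thm:4.1.1} and $\Omega_\rho^{\rm ap}$ of Theorem~\ref{thm:apriori-backward}, and take $0<h<h_\rho^{\rm ap}(\omega)$.

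\emph{The exact memory term.} For $0\le v\le u\le T$, Lemma~\ref{lem:3.1.1} gives
\[
|Y(u)-Y(v)|\le rK_1|u-v|+K_2\int_{-r}^0|X(u+s)-X(v+s)|\,ds .
\]
Since $u+s,\,v+s\in[-r,T]$, the H\"older regularity of the extended path (the Remark after Theorem~\ref{thm:4.1.1}) bounds the integrand by $C(\omega)|u-v|^{H-\rho}$. Because $|u-v|\le T$ and $H-\rho<1$, we also have $|u-v|\le T^{1-H+\rho}|u-v|^{H-\rho}$. Together these give $|Y(u)-Y(v)|\le C(\omega)|u-v|^{H-\rho}$.

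\emph{The discrete memory term.} Lemma~\ref{lem:3.3.2} gives
\[
|\widetilde Y(u)-\widetilde Y(v)|\le rK_1|u-v|+K_2\int_{-r}^0\bigl|\widetilde X_h(u+C_h(s))-\widetilde X_h(v+C_h(s))\bigr|\,ds .
\]
The arguments $u+C_h(s)$ and $v+C_h(s)$ differ by exactly $u-v$ and lie in $[-r,T]$. The Hölder estimate for the auxiliary interpolation in Theorem~\ref{thm:apriori-backward} therefore bounds the integrand by $C_\rho(\omega)|u-v|^{H-\rho}$. This constant is uniform in $h$ and holds on all of $[-r,T]$, including across $t=0$ where $\widetilde X_h=\phi$. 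The linear term is absorbed as before, and adding the two bounds proves the corollary.

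\emph{Main obstacle.} The point requiring care is that $\widetilde Y$, as defined in \eqref{Ytilde}, is evaluated at non-mesh times $u$ and $v$. This brings in the interpolation $\widetilde X_h$ between nodes rather than only its nodal values. The full-interval Hölder bound of Theorem~\ref{thm:apriori-backward}, rather than a nodal bound, is therefore exactly what is needed. A constant uniform in $h$ then requires restricting to $h<h_\rho^{\rm ap}(\omega)$.

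The term $rK_1|u-v|$ in Lemma~\ref{lem:3.3.2} should also be checked, since the time argument of $K$ varies while $C_h(s)$ is fixed. I expect it to follow directly from the first line of \eqref{Hip:3}.
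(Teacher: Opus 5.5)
Your proposal is correct and follows the paper's proof: Lemma~\ref{lem:3.1.1} together with the $H-\rho$ H\"older regularity of the extended path of $X$ on $[-r,T]$ handles $Y$, and Lemma~\ref{lem:3.3.2} together with the uniform H\"older bound of Theorem~\ref{thm:apriori-backward} handles $\widetilde Y$. The only additions are details the paper leaves implicit, namely absorbing the linear term through $|u-v|\le T^{1-H+\rho}|u-v|^{H-\rho}$ and checking that the shifted arguments $u+C_h(s)$ and $v+C_h(s)$ lie in $[-r,T]$.
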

\begin{proof}
For $Y$, Lemma~\ref{lem:3.1.1} and the $H-\rho$-Hölder regularity of
$X$ on $[-r,T]$ give the estimate. For $\widetilde Y$, we apply
Lemma~\ref{lem:3.3.2} together with Theorem~\ref{thm:apriori-backward}.
\end{proof}

\subsubsection{Error Functionals}
\label{sec:error-functionals-backward}

For $t\in[-r,T]$, define the error of the auxiliary interpolation by
\begin{equation}
\label{error-backward}
E_h(t):=X(t)-\widetilde X_h(t).
\end{equation}
In particular, $E_h(t)=0$ for $t\in[-r,0]$. For $0\le v<u\le T$ we
introduce the increments of the state error and of the memory-term
error,
\begin{align}
\label{DeltaX-backward}
\Delta^X_h(v,u)
&:=|E_h(u)-E_h(v)|,\\
\label{DeltaY-backward}
\Delta^Y_h(v,u)
&:=\big|Y(u)-\widetilde Y(u)-Y(v)+\widetilde Y(v)\big|.
\end{align}
The continuous uniform error is denoted by
\begin{equation}
\label{Zh-cont-backward}
Z_h(t):=\sup_{0\le s\le t}|E_h(s)|,
\qquad t\in[0,T].
\end{equation}
Since $E_h(t)=0$ for $t\in[-r,0]$, the uniform error over the whole
interval $[-r,T]$ satisfies
\[
\sup_{-r\le t\le T}|E_h(t)|=Z_h(T).
\]
In particular,
\[
\max_{0\le n\le N}|E_h(t_n)|\le Z_h(T).
\]
According to the fractional structure of the integral with respect to
$B^H$, we define
\begin{equation}
\label{theta-backward}
\theta_h(u)
:=\int_0^u
\frac{\Delta^X_h(v,u)}{(u-v)^{\beta+1}}\,dv,
\qquad u\in[0,T],
\end{equation}
and, to control the increments associated with the memory term,
\begin{equation}
\label{kappa-backward}
\kappa_h(u)
:=\int_0^{C_h(u)}\int_{-C_h(v)}^0
\frac{\Delta^X_h(v+s,u+s)}{(u-v)^{\beta+1}}\,ds\,dv.
\end{equation}
The integrals in \eqref{theta-backward} and \eqref{kappa-backward} are
finite by virtue of the $H-\rho$-Hölder regularity of $X$ and of
$\widetilde X_h$, together with the condition $\beta<H-\rho$.

Finally, we introduce the error functional
\begin{equation}
\label{Eh-functional-backward}
\mathcal E_h(t):=Z_h(t)+\theta_h(t)+\kappa_h(t),
\qquad t\in[0,T].
\end{equation}
This functional is different from the functional $\varphi_h$ used in
Theorem~\ref{thm:apriori-backward}: $\varphi_h$ controls the size and
regularity of the approximation, while $\mathcal E_h$ is built
exclusively from differences between the exact solution and the
numerical approximation.

The proof of convergence is obtained by jointly controlling $Z_h$,
$\theta_h$ and $\kappa_h$. The part associated with the diffusion
retains the structure of the analysis for the explicit Euler scheme of
\cite{abreu2026euler}, while the drift estimates must incorporate the
evaluation at $C_h^+$. In particular, the goal is to obtain a closed
integral inequality for $\mathcal E_h$ whose free term is of order
$h^{H-\rho-\beta}$.

\subsubsection{Global Error Estimates}
\label{sec:error-estimates-backward}

In what follows, the positive constants denoted by $C$ are
deterministic, while $C(\omega)$ may change from line to line and is
independent of $h$.

\begin{lemma}
\label{lem:delay-error-backward}
Under the hypotheses of Theorem~\ref{thm:convergencia-backward}, for
every $t\in[0,T]$,
\begin{equation}
\label{eq:delay-point-error-backward}
|Y(t)-\widetilde Y(t)|
\le C Z_h(t)+C(\omega)h^{H-\rho}.
\end{equation}
Moreover,
\begin{equation}
\label{eq:delay-increment-error-backward}
\int_0^t\int_0^u
\frac{\Delta_h^Y(v,u)}{(u-v)^{\beta+1}}\,dv\,du
\le C(\omega)h^{H-\rho-\beta}
 +C\int_0^t Z_h(u)\,du
 +C\int_0^t \kappa_h(u)\,du .
\end{equation}
\end{lemma}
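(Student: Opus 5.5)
For the pointwise bound \eqref{eq:delay-point-error-backward} I will split
\[
Y(t)-\widetilde Y(t)=\bigl[Y(t)-\widehat Y(t)\bigr]+\bigl[\widehat Y(t)-\widetilde Y(t)\bigr].
\]
The first bracket is a pure quadrature error. Write it as $\int_{-r}^0[K(t,s,X(t+s))-K(t,C_h(s),X(t+C_h(s)))]\,ds$ and bound it with Hypothesis~3 and the $(H-\rho)$-Hölder regularity of the extended path (Theorem~\ref{thm:4.1.1} and the subsequent remark). This gives $C(\omega)h^{H-\rho}$ for every $t$, not only at mesh points, exactly as in Lemma~\ref{lem:4.4.1}.

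For the second bracket I repeat the argument of Lemma~\ref{lem:4.4.2}, but at a general $t$ rather than at mesh points. We have
\[
|\widehat Y(t)-\widetilde Y(t)|\le K_2h\sum_i|E_h(t+t_i)|.
\]
Every argument satisfies $t+t_i\le t-h$, and $E_h=0$ on $[-r,0]$. Hence the sum is at most $rK_2 Z_h(t)$, and $C=rK_2$ is deterministic.

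For the increment bound \eqref{eq:delay-increment-error-backward} I will use the same splitting at the two times $u$ and $v$:
\[
\Delta_h^Y(v,u)\le \bigl|(Y-\widehat Y)(u)-(Y-\widehat Y)(v)\bigr|+\bigl|(\widehat Y-\widetilde Y)(u)-(\widehat Y-\widetilde Y)(v)\bigr|.
\]

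The quadrature part will be handled with the four-point estimate of Lemma~\ref{lem:K-second-increment}, taking $(s_1,x_1)=(s,X(u+s))$, $(s_2,x_2)=(C_h(s),X(u+C_h(s)))$, and the analogous points at $v$. I then treat two regimes.
\begin{itemize}
\item When $u-v\le h$, I bound each quadrature error separately by $C(\omega)h^{H-\rho}$. Integrating the singular kernel over $u-v\ge$ the relevant scale then produces $h^{H-\rho-\beta}$.
\item When $u-v\ge h$, the second-order increment is at most $C(\omega)\bigl(|u-v|^{H-\rho}h^{H-\rho}+|u-v|+h\bigr)$. Against $(u-v)^{-\beta-1}$ this is also of order $h^{H-\rho-\beta}$. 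I will use $H-\rho>\beta$ so that $\int (u-v)^{H-\rho-\beta-1}$ converges, together with $h^{2(H-\rho)-\beta}\le h^{H-\rho-\beta}$.
\end{itemize}
Getting the exponent exactly $H-\rho-\beta$ rather than something worse is the main obstacle. If the four-point route is too lossy, my fallback is the cruder bound $\min\{C(\omega)h^{H-\rho},\,C(\omega)|u-v|^{H-\rho}\}$. Integrated against $(u-v)^{-\beta-1}$, this gives $h^{H-\rho}\cdot h^{-\beta}$ from the far region and $h^{H-\rho-\beta}$ from the near region, which already suffices.

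For the second, linear part I will write
\[
(\widehat Y-\widetilde Y)(u)-(\widehat Y-\widetilde Y)(v)=h\sum_i\Bigl[K(u,t_i,X(u+t_i))-K(u,t_i,\widetilde X_h(u+t_i))-K(v,t_i,X(v+t_i))+K(v,t_i,\widetilde X_h(v+t_i))\Bigr],
\]
and apply Lemma~\ref{lem:K-second-increment} with the $s$-arguments equal. The result is
\[
C\,\Delta^X_h(v+t_i,u+t_i)+C\bigl(|u-v|+|X(u+t_i)-X(v+t_i)|\bigr)\bigl(|E_h(u+t_i)|+|E_h(v+t_i)|\bigr).
\]
The terms with $|E_h|$ are at most $Z_h(u)$ times a factor $C(\omega)|u-v|^{H-\rho}$. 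The kernel integral of that factor is finite, so after integrating in $v$ and $u$ this yields $C\int_0^t Z_h(u)\,du$. The constant is then only pathwise; I will allow $C(\omega)$ there, or note that $Z_h\le C(\omega)$ from Theorem~\ref{thm:apriori-backward} so that it can be absorbed into $h$-free terms.

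It remains to treat the terms with $\Delta^X_h(v+t_i,u+t_i)$. I will rewrite the Riemann sum as $\int_{-r}^0\Delta^X_h(v+C_h(s),u+C_h(s))\,ds$ and restrict to the region where the shifted times are positive, since $E_h$ vanishes otherwise. After integrating against $(u-v)^{-\beta-1}$, this should match $\kappa_h(u)$ up to replacing $C_h(s)$ by $s$ and $v$ by $C_h(v)$. I will control these replacements with the Hölder regularity of $E_h$ from Theorem~\ref{thm:4.1.1} and Theorem~\ref{thm:apriori-backward}, at cost $C(\omega)h^{H-\rho-\beta}$. The boundary strip $v\in[C_h(u),u]$ is also absorbed into $C(\omega)h^{H-\rho-\beta}$. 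Integrating in $u$ over $[0,t]$ gives the stated form.
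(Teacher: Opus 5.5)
Your argument is correct in substance, but it is organized differently from the paper's.

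\textbf{How the routes differ.} The paper applies Lemma~\ref{lem:K-second-increment} once to the full difference $K(u,s,X(u+s))-K(u,C_h(s),\widetilde X_h(u+C_h(s)))-K(v,s,X(v+s))+K(v,C_h(s),\widetilde X_h(v+C_h(s)))$. As a result, the quadrature shift $s\mapsto C_h(s)$ is entangled with the numerical error in most of the seven terms $M_1,\dots,M_7$, and the mixed quantity $\Upsilon_h$ needs the six-region analysis of Appendix~\ref{app:delay-seven-terms}.

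You instead separate two pieces. The quadrature error $Y-\widehat Y$ involves only the exact path, and the state error $\widehat Y-\widetilde Y$ has coinciding nodes.
\begin{itemize}
\item The quadrature part then needs no four-point lemma. The bound $\min\{C(\omega)h^{H-\rho},C(\omega)|u-v|^{H-\rho}\}$ against $(u-v)^{-\beta-1}$ gives $C(\omega)h^{H-\rho-\beta}$ uniformly in $u$.
\item In the state part, the four-point lemma with equal $s$-arguments leaves only $\Delta^X_h$ at grid-shifted times plus products with $|E_h|$. Your passage to $\kappa_h$ is the same mechanism as the paper's $A_5$. You replace $C_h(s)$ by $s$ for $v\le C_h(u)$ at cost $C(\omega)h^{H-\rho}(u-C_h(u))^{-\beta}$, whose $u$-integral is $O(h^{H-\rho-\beta})$. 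The strip $C_h(u)\le v\le u$ is handled by Hölder continuity.
\item Your pointwise bound uses only the Hölder regularity of $X$, as in Lemmas~\ref{lem:4.4.1} and~\ref{lem:4.4.2} at a general $t$. Unlike the paper's, it therefore does not need Theorem~\ref{thm:apriori-backward}.
\end{itemize}
The split gives a shorter, more transparent proof. The paper's route handles everything in one pass at the price of heavier bookkeeping.

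\textbf{Three things need fixing.}

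First, the two bullets for the quadrature part are wrong as written.
\begin{itemize}
\item Using $h^{H-\rho}$ when $u-v\le h$ leaves $(u-v)^{-\beta-1}$ non-integrable at the diagonal; the two regimes are swapped.
\item A free term $|u-v|$ integrated against $(u-v)^{-\beta-1}$ over $[h,u]$ is $O(1)$, not $O(h^{H-\rho-\beta})$.
\item The second difference $X(u+s)-X(u+C_h(s))-X(v+s)+X(v+C_h(s))$ of a merely Hölder path admits only the min bound, not a product bound.
\end{itemize}
Drop the bullets and make the fallback the proof.

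Second, $E_h$ does not vanish on the crossing band $-u\le s\le -v$, where $v+s\le0<u+s$. There $\Delta^X_h(v+s,u+s)=|E_h(u+s)|$, and $\int_{-u}^{-v}|E_h(u+s)|\,ds\le(u-v)Z_h(u)$, which gives $CZ_h(u)$ after the $v$-integration. The strip $-v\le s\le -C_h(v)$ contributes $C(\omega)h$ by Hölder continuity. These are the first two pieces of the paper's $A_5$.

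Third, consider the products with $|E_h|$, including the term $\Delta^X_h\,(|E_h(u+t_i)|+|E_h(v+t_i)|)$ that you dropped from the four-point bound. Of your two options, only keeping $C(\omega)\int_0^t Z_h(u)\,du$ is admissible. Absorbing $Z_h\le C(\omega)$ into an $h$-free term would put an $O(1)$ free term into the Gronwall inequality and destroy the rate. The random coefficient itself is harmless, since Lemmas~\ref{lem:Zh-backward}--\ref{lem:kappa-backward} carry $C(\omega)$ anyway. It is also what the paper's $M_6$ and $M_7$ actually produce, because the Hölder constant of $X$ is random.
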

\begin{proof}
From the definitions of $Y$ and $\widetilde Y$,
\begin{align*}
|Y(t)-\widetilde Y(t)|
&\le \int_{-r}^0\Bigl(
K_1|s-C_h(s)|
 +K_2|X(t+s)-\widetilde X_h(t+C_h(s))|
\Bigr)\,ds\\
&\le Ch+C\int_{-r}^0|E_h(t+s)|\,ds
 +C\int_{-r}^0
 |\widetilde X_h(t+s)-\widetilde X_h(t+C_h(s))|\,ds.
\end{align*}
The common initial condition implies $E_h=0$ on $[-r,0]$, and the
$H-\rho$-Hölder regularity of $\widetilde X_h$, together with
$|s-C_h(s)|\le h$, gives \eqref{eq:delay-point-error-backward}.

We next prove the increment estimate. For $0\le v<u\le t$, apply
Lemma~\ref{lem:K-second-increment} to
\begin{align*}
&K(u,s,X(u+s))-K(u,C_h(s),\widetilde X_h(u+C_h(s)))\\
&\qquad{}-K(v,s,X(v+s))
 +K(v,C_h(s),\widetilde X_h(v+C_h(s))).
\end{align*}
After integration in $s\in[-r,0]$ this yields
\begin{equation}
\label{eq:delay-seven-terms}
\int_0^u
\frac{\Delta_h^Y(v,u)}{(u-v)^{\beta+1}}\,dv
\le C\sum_{j=1}^7 M_j(u),
\end{equation}
where the seven nonnegative quantities $M_j$ are displayed explicitly
in Appendix~\ref{app:delay-seven-terms}. They separate the time/memory
quadrature error, the pointwise state error, the genuine increment
error, and the product terms generated by the second-increment bound
for $K$.

The elementary terms satisfy
\begin{align}
\label{eq:delay-M-easy}
\int_0^t M_1(u)\,du+\int_0^t M_5(u)\,du
&\le C(\omega)h,\\
\int_0^t\{M_2(u)+M_3(u)+M_6(u)+M_7(u)\}\,du
&\le C\int_0^t Z_h(u)\,du+C(\omega)h^{H-\rho}.
\end{align}
The only term requiring a further decomposition is $M_4$. Splitting
its $(v,s)$-domain according to whether the shifted arguments lie in
the initial interval or in $[0,T]$ gives six regions. The region
estimates proved in Appendix~\ref{app:delay-seven-terms} yield
\begin{equation}
\label{eq:delay-M4-bound}
\int_0^t M_4(u)\,du
\le C(\omega)h^{H-\rho-\beta}
 +C\int_0^t Z_h(u)\,du
 +C\int_0^t\kappa_h(u)\,du.
\end{equation}
Combining \eqref{eq:delay-seven-terms}-\eqref{eq:delay-M4-bound}, and
using $0<h\le1$ and
$0<H-\rho-\beta<H-\rho<1$, so that both $h$ and
$h^{H-\rho}$ are bounded by $h^{H-\rho-\beta}$, gives
\eqref{eq:delay-increment-error-backward}.
\end{proof}

\begin{lemma}
\label{lem:diffusion-transfer-backward}
Set
\[
G_h(s):=\sigma(s,X(s),Y(s))
-\sigma(C_h(s),\widetilde X_h(C_h(s)),\widetilde Y(C_h(s)))
\]
and
\[
I_{\sigma,h}(t):=\int_0^tG_h(s)\,dB^H(s).
\]
Under the hypotheses of Theorem~\ref{thm:convergencia-backward}, the
following bounds hold for $t,u\in[0,T]$:
\begin{align}
\label{eq:diffusion-sup-transfer}
\sup_{0\le y\le t}|I_{\sigma,h}(y)|
\le C(\omega)\Bigg[{}&h^{H-\rho-\beta}
+\int_0^t Z_h(s)s^{-\beta}\,ds
+\int_0^t\theta_h(s)\,ds\notag\\
&+\int_0^t\int_0^s
\frac{\Delta_h^Y(v,s)}{(s-v)^{\beta+1}}\,dv\,ds
+\int_0^t Z_h(s)\,ds\Bigg],
\end{align}
\begin{align}
\label{eq:diffusion-theta-transfer}
\int_0^u\frac{|I_{\sigma,h}(u)-I_{\sigma,h}(v)|}
{(u-v)^{\beta+1}}\,dv
\le C(\omega)\Bigg[{}&h^{H-\rho-\beta}
+\int_0^u\mathcal E_h(s)(u-s)^{-\beta}\,ds\notag\\
&+\int_0^u Z_h(s)(u-s)^{-2\beta}\,ds\Bigg],
\end{align}
and the same upper bound controls the shifted-memory integral
\begin{equation}
\label{eq:diffusion-kappa-transfer}
\int_0^{C_h(u)}\int_{-C_h(v)}^0
\frac{|I_{\sigma,h}(u+s)-I_{\sigma,h}(v+s)|}
{(u-v)^{\beta+1}}\,ds\,dv.
\end{equation}
\end{lemma}
\begin{proof}
The proof is pathwise and uses only the left-point projection $C_h$;
the implicit projection $C_h^+$ never enters. Apply the generalized
Lebesgue-Stieltjes estimate to $G_h$. Hypothesis~1,
\eqref{eq:delay-point-error-backward}, and the Hölder regularity of
$X$ and $\widetilde X_h$ yield
\[
|G_h(s)|\le C\,Z_h(s)+C(\omega)h^{H-\rho}.
\]
For $0\le v<s\le T$, add and subtract the pointwise coefficients at
$(s,\widetilde X_h(C_h(s)),\widetilde Y(C_h(s)))$ and at the
corresponding time $v$. Lemma~\ref{lem:sigma-second-increment}, whose
assumptions follow from the functional Hypothesis~2 by
Lemma~\ref{lem:sigma-restriction}, controls the resulting four-point
difference. This gives contributions involving $\Delta_h^X$,
$\Delta_h^Y$, $Z_h$, and the projection remainder of size
$h^{H-\rho}$. Integration against the fractional kernels, followed by
Fubini's theorem, gives \eqref{eq:diffusion-sup-transfer}-
\eqref{eq:diffusion-kappa-transfer}. The singular integrations and the
same-cell/different-cell split for $C_h$ are written out in
Appendix~\ref{app:transfer-details}. These are the diffusion estimates
used in the explicit analysis of~\cite{abreu2026euler}, reproduced here
in the notation of the present paper so that the convergence argument
is self-contained.
\end{proof}

\begin{lemma}
\label{lem:Zh-backward}
Under the hypotheses of Theorem~\ref{thm:convergencia-backward}, for
every $t\in[0,T]$,
\begin{equation}
\label{eq:Zh-backward}
Z_h(t)
\le C(\omega)\left[
 h^{H-\rho-\beta}
 +\int_0^t (1+u^{-\beta})Z_h(u)\,du
 +\int_0^t\theta_h(u)\,du
 +\int_0^t\kappa_h(u)\,du
\right].
\end{equation}
\end{lemma}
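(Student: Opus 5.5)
We subtract the integral representation of the auxiliary interpolation (the continuous-time version of \eqref{Backward-1}) from the exact equation \eqref{eq:X}. This splits $E_h(y)$, for $0\le y\le t$, into a drift part and a diffusion part:
\[
E_h(y)=\int_0^y\Bigl[b(u,X(u),Y(u))-b\bigl(C_h^+(u),\widetilde X_h(C_h^+(u)),\widetilde Y(C_h^+(u))\bigr)\Bigr]du+I_{\sigma,h}(y).
\]
The diffusion part is handled entirely by \eqref{eq:diffusion-sup-transfer} of Lemma~\ref{lem:diffusion-transfer-backward}. It produces $h^{H-\rho-\beta}$, $\int_0^t Z_h(s)s^{-\beta}ds$, $\int_0^t\theta_h$, $\int_0^t Z_h$, and the double integral of $\Delta_h^Y$. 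That last term is then replaced using \eqref{eq:delay-increment-error-backward} of Lemma~\ref{lem:delay-error-backward}, which bounds it by $C(\omega)h^{H-\rho-\beta}+C\int_0^tZ_h+C\int_0^t\kappa_h$. After these two substitutions, every diffusion contribution already has the form of the right-hand side of \eqref{eq:Zh-backward}.

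The drift part is where the backward structure enters. We insert the intermediate point $\bigl(C_h^+(u),X(C_h^+(u)),Y(C_h^+(u))\bigr)$ and use Hypothesis~1. The integrand is then bounded by
\[
L_1h+L_2|X(u)-X(C_h^+(u))|+L_2|Y(u)-Y(C_h^+(u))|+L_2|E_h(C_h^+(u))|+L_2|Y(C_h^+(u))-\widetilde Y(C_h^+(u))|.
\]
The time-regularity terms are handled as follows. Theorem~\ref{thm:4.1.1} and Corollary~\ref{cor:3.3.1-back} give $C(\omega)h^{H-\rho}\le C(\omega)h^{H-\rho-\beta}$. The memory error term is controlled by \eqref{eq:delay-point-error-backward}, which yields $CZ_h(C_h^+(u))+C(\omega)h^{H-\rho}$. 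The delicate point is that $E_h$ and $Z_h$ are evaluated at the right endpoint $C_h^+(u)$, which may exceed $u$ and can even lie beyond $t$ on the last cell.

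To deal with this, we write $E_h(C_h^+(u))=E_h(u)+[E_h(C_h^+(u))-E_h(u)]$. The bracket has size at most $C(\omega)h^{H-\rho}$ by the common $(H-\rho)$-Hölder bounds of $X$ (Theorem~\ref{thm:4.1.1}) and $\widetilde X_h$ (Theorem~\ref{thm:apriori-backward}), valid for $h<h^{\rm ap}_\rho(\omega)$. Similarly, $Z_h(C_h^+(u))\le Z_h(u)+C(\omega)h^{H-\rho}$. With this, the whole drift contribution is bounded by $C(\omega)h^{H-\rho}+C\int_0^tZ_h(u)\,du$.

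This Hölder-shift trick is the step I expect to be the main obstacle. It has to be applied carefully because it uses the a priori Hölder bound of the interpolation, which is precisely why $h$ must lie below the random threshold of Theorem~\ref{thm:apriori-backward}. If one prefers to avoid using Hölder continuity of the error, an alternative is an absorption argument. In that version, the term $hL_2|E_h(t_{n+1})|$ for the last cell is moved to the left-hand side, using $hL_2<1$ as in Proposition~\ref{prop:existencia-implicito}. Either route gives the same final form.

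Finally, we take the supremum over $y\in[0,t]$ and add the drift and diffusion bounds. We use $\int_0^tZ_h\le\int_0^t(1+u^{-\beta})Z_h$, merge the $s^{-\beta}$ term into the same integral, and bound all powers $h$ and $h^{H-\rho}$ by $h^{H-\rho-\beta}$, since $0<h\le1$. This yields \eqref{eq:Zh-backward}.
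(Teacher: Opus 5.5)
Your proof is correct and has the same architecture as the paper's. You subtract \eqref{Backward-1} from \eqref{eq:X} to get the error identity. You bound the diffusion part by \eqref{eq:diffusion-sup-transfer} followed by \eqref{eq:delay-increment-error-backward}. You reduce the drift part via Hypothesis~1 to the bound \eqref{eq:deriva-error-backward}.

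The only real difference is the intermediate point you insert in the drift. The paper inserts $b(s,\widetilde X_h(s),\widetilde Y(s))$. The state and memory errors are then evaluated at $s\le t$, where $Z_h(s)$ and \eqref{eq:delay-point-error-backward} control them. The forward shift $s\mapsto C_h^+(s)$ is paid only through the H\"older bounds of $\widetilde X_h$ (Theorem~\ref{thm:apriori-backward}) and $\widetilde Y$ (Corollary~\ref{cor:3.3.1-back}).

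Your choice of the exact state at $C_h^+(u)$ instead produces $E_h(C_h^+(u))$ and $Z_h(C_h^+(u))$. You shift these back correctly, using the $(H-\rho)$-H\"older continuity of $X$ and $\widetilde X_h$ together with $|C_h^+(u)-u|\le h$. So your route works. However, the ``delicate point'' you single out comes from that ordering, not from the backward scheme. The paper's ordering removes it at no cost, since both routes rely on the same a priori H\"older bound for the interpolation.

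The absorption alternative you sketch is unnecessary and, as stated, incomplete. For $t\in(t_n,t_{n+1})$, the value $|E_h(t_{n+1})|$ is not part of $Z_h(t)$ on the left-hand side, so it cannot be absorbed there. The absorption therefore only closes at mesh points, and extending it to intermediate $t$ brings the H\"older shift back.
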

\begin{proof}
Subtracting the integral representation of $\widetilde X_h$ \eqref{Backward-1} from the
exact equation \eqref{eq:X}, for $t\in[0,T]$ we obtain
\begin{align}
\label{eq:error-identity-backward}
E_h(t)
={}&\int_0^t\Bigl[
 b(s,X(s),Y(s))
 -b(C_h^+(s),\widetilde X_h(C_h^+(s)),
        \widetilde Y(C_h^+(s)))
\Bigr]ds\nonumber\\
&+\int_0^t\Bigl[
 \sigma(s,X(s),Y(s))
 -\sigma(C_h(s),\widetilde X_h(C_h(s)),
          \widetilde Y(C_h(s)))
\Bigr]dB^H(s).
\end{align}
Denote by $D_b(t)$ and $D_\sigma(t)$ the drift and diffusion
contributions, respectively.

For the drift, we introduce $(\widetilde X_h(s),\widetilde Y(s))$ as an
intermediate state. By Hypothesis~1,
\begin{align*}
|D_b(t)|
\le{}& C\int_0^t\bigl(|E_h(s)|+|Y(s)-\widetilde Y(s)|\bigr)\,ds
 +C\int_0^t|s-C_h^+(s)|\,ds\\
&+C\int_0^t\Bigl(
 |\widetilde X_h(s)-\widetilde X_h(C_h^+(s))|
 +|\widetilde Y(s)-\widetilde Y(C_h^+(s))|
\Bigr)\,ds.
\end{align*}
By \eqref{eq:delay-point-error-backward}, Theorem
\ref{thm:apriori-backward}, Corollary~\ref{cor:3.3.1-back} and
$|s-C_h^+(s)|\le h$,
\begin{equation}
\label{eq:deriva-error-backward}
|D_b(t)|
\le C\int_0^t Z_h(s)\,ds+C(\omega)h^{H-\rho}.
\end{equation}
For the diffusion contribution, Lemma~\ref{lem:diffusion-transfer-backward}
gives
\begin{align}
\label{eq:diffusion-error-backward}
\sup_{0\le y\le t}|D_\sigma(y)|
\le C(\omega)\Bigg[{}&h^{H-\rho-\beta}
 +\int_0^t Z_h(u)u^{-\beta}\,du
 +\int_0^t\theta_h(u)\,du\nonumber\\
&+\int_0^t\int_0^u
\frac{\Delta_h^Y(v,u)}{(u-v)^{\beta+1}}\,dv\,du
 +\int_0^t Z_h(u)\,du
\Bigg].
\end{align}
Taking the supremum in \eqref{eq:error-identity-backward}, combining
\eqref{eq:deriva-error-backward}-\eqref{eq:diffusion-error-backward}
and applying \eqref{eq:delay-increment-error-backward}, we obtain
\eqref{eq:Zh-backward}.
\end{proof}

\begin{lemma} 
\label{lem:theta-backward}
Under the hypotheses of Theorem~\ref{thm:convergencia-backward}, for
every $u\in[0,T]$,
\begin{align}
\label{eq:theta-est-backward}
\theta_h(u)
\le C(\omega)\Bigg[{}&h^{H-\rho-\beta}
 +\int_0^u
 \bigl(Z_h(s)+\theta_h(s)+\kappa_h(s)\bigr)
 (u-s)^{-\beta}\,ds\nonumber\\
&+\int_0^u Z_h(s)(u-s)^{-2\beta}\,ds
\Bigg].
\end{align}
\end{lemma}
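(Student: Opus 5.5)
We split $\Delta^X_h(v,u)=|E_h(u)-E_h(v)|$ using the error identity \eqref{eq:error-identity-backward} between times $v$ and $u$:
\[
E_h(u)-E_h(v)=\bigl(D_b(u)-D_b(v)\bigr)+\bigl(I_{\sigma,h}(u)-I_{\sigma,h}(v)\bigr),
\]
where $I_{\sigma,h}$ is the diffusion error integral of Lemma~\ref{lem:diffusion-transfer-backward}. This gives $\theta_h(u)\le \Theta_b(u)+\Theta_\sigma(u)$, with
\[
\Theta_b(u):=\int_0^u\frac{|D_b(u)-D_b(v)|}{(u-v)^{\beta+1}}\,dv,
\qquad
\Theta_\sigma(u):=\int_0^u\frac{|I_{\sigma,h}(u)-I_{\sigma,h}(v)|}{(u-v)^{\beta+1}}\,dv .
\]
The diffusion part needs no new work. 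Estimate \eqref{eq:diffusion-theta-transfer} bounds $\Theta_\sigma(u)$ by $C(\omega)\bigl[h^{H-\rho-\beta}+\int_0^u\mathcal E_h(s)(u-s)^{-\beta}ds+\int_0^uZ_h(s)(u-s)^{-2\beta}ds\bigr]$. Since $\mathcal E_h=Z_h+\theta_h+\kappa_h$, this already has the form of the right-hand side of \eqref{eq:theta-est-backward}. This is where the implicit projection $C_h^+$ plays no role, as in Remark~\ref{rem:explicit-backward}.

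For the drift part, we write the integrand of $D_b$ as in the proof of Lemma~\ref{lem:Zh-backward}, using the intermediate state $(\widetilde X_h(w),\widetilde Y(w))$. This gives, for $v<u$,
\[
|D_b(u)-D_b(v)|\le C\int_v^u\bigl(|E_h(w)|+|Y(w)-\widetilde Y(w)|\bigr)dw+C(\omega)\int_v^u h^{H-\rho}\,dw .
\]
The second integral comes from $|w-C_h^+(w)|\le h$ and from the intra-cell increments $|\widetilde X_h(w)-\widetilde X_h(C_h^+(w))|$ and $|\widetilde Y(w)-\widetilde Y(C_h^+(w))|$. These are of order $h^{H-\rho}$ by Theorem~\ref{thm:apriori-backward} and Corollary~\ref{cor:3.3.1-back}. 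With \eqref{eq:delay-point-error-backward} this yields
\[
|D_b(u)-D_b(v)|\le C\int_v^uZ_h(w)\,dw+C(\omega)h^{H-\rho}(u-v).
\]
A cruder bound is also available, $|D_b(u)-D_b(v)|\le C(\omega)(u-v)$. It follows from the linear growth of $b$ together with the a priori bound of Theorem~\ref{thm:apriori-backward}, using \eqref{eq:forward-node-control} for the right-endpoint value $\widetilde X_h(C_h^+(w))$.

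Next we insert this into $\Theta_b$ and apply Fubini:
\[
\int_0^u(u-v)^{-\beta-1}\int_v^uZ_h(w)\,dw\,dv=\frac1\beta\int_0^uZ_h(w)(u-w)^{-\beta}\,dw ,
\]
which is of the admissible form. The remainder term gives $C(\omega)h^{H-\rho}\int_0^u(u-v)^{-\beta}dv\le C(\omega)h^{H-\rho}\le C(\omega)h^{H-\rho-\beta}$. Adding the drift and diffusion bounds gives \eqref{eq:theta-est-backward}.

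The main obstacle is the drift increment near the mesh. The right-endpoint value $\widetilde X_h(C_h^+(w))$ is taken from the following cell, so for $u,v$ in the same or adjacent cells the bound must not use the crude Lipschitz estimate in a way that produces a factor $h^{-\beta}$. We first check that the intra-cell bound $|\widetilde X_h(w)-\widetilde X_h(C_h^+(w))|\le C(\omega)h^{H-\rho}$ holds uniformly, including at $w=T$ where $C_h^+(T)=T$. If it does, the drift remainder is integrated against $(u-v)^{-\beta-1}$ only after it has been multiplied by $(u-v)$, so no singular factor appears. If some residual piece is not integrable in that form, we split $[0,u]$ into $[0,C_h(u)-h]$ and $[C_h(u)-h,u]$. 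On the short interval we use the Hölder bound of Theorem~\ref{thm:apriori-backward}, which contributes $C(\omega)h^{H-\rho-\beta}$.
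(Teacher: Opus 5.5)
Your proposal is correct and follows the paper's own route. You split $\theta_h$ through the error identity on $[v,u]$. You bound the drift increment by $C\int_v^u Z_h(w)\,dw+C(\omega)h^{H-\rho}(u-v)$ and apply Fubini. You then take the diffusion part directly from \eqref{eq:diffusion-theta-transfer}; the paper additionally invokes \eqref{eq:delay-increment-error-backward} at this step, but that is not needed once \eqref{eq:diffusion-theta-transfer} is used in the form stated.

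One small correction: since $\int_0^w(u-v)^{-\beta-1}\,dv=\beta^{-1}\bigl[(u-w)^{-\beta}-u^{-\beta}\bigr]$, your displayed Fubini identity should be the inequality $\le$. Also, the contingency split in your last paragraph is unnecessary, because the drift remainder already carries the factor $(u-v)$.
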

\begin{proof}
Applying \eqref{eq:error-identity-backward} on $[v,u]$ and then
integrating with respect to $v$ with weight $(u-v)^{-\beta-1}$, the
drift contribution is estimated using the same decomposition as in
\eqref{eq:deriva-error-backward}. By Fubini's theorem,
\begin{align}
\label{eq:theta-deriva-backward}
&\int_0^u (u-v)^{-\beta-1}
 \int_v^u
 \left|b(s,X(s),Y(s))
 -b(C_h^+(s),\widetilde X_h(C_h^+(s)),
        \widetilde Y(C_h^+(s)))\right|ds\,dv\nonumber\\
&\hspace{1cm}\le
 C\int_0^u Z_h(s)(u-s)^{-\beta}\,ds
 +C(\omega)h^{H-\rho}.
\end{align}
For the diffusion contribution, apply
\eqref{eq:diffusion-theta-transfer} from
Lemma~\ref{lem:diffusion-transfer-backward} and then use
\eqref{eq:delay-increment-error-backward}. We obtain
\begin{align*}
C(\omega)\Bigg[
 h^{H-\rho-\beta}
 +\int_0^u
 (Z_h(s)+\theta_h(s)+\kappa_h(s))(u-s)^{-\beta}\,ds
 +\int_0^u Z_h(s)(u-s)^{-2\beta}\,ds
\Bigg].
\end{align*}
Since $h^{H-\rho}\le h^{H-\rho-\beta}$ for $0<h\le1$, we obtain
\eqref{eq:theta-est-backward}.
\end{proof}

\begin{lemma}
\label{lem:kappa-backward}
Under the hypotheses of Theorem~\ref{thm:convergencia-backward}, for
every $u\in[0,T]$,
\begin{align}
\label{eq:kappa-est-backward}
\kappa_h(u)
\le C(\omega)\Bigg[{}&h^{H-\rho-\beta}
 +\int_0^u
 \bigl(Z_h(s)+\theta_h(s)+\kappa_h(s)\bigr)
 (u-s)^{-\beta}\,ds\nonumber\\
&+\int_0^u Z_h(s)(u-s)^{-2\beta}\,ds
\Bigg].
\end{align}
\end{lemma}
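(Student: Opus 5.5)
The plan is to mirror the proof of Lemma~\ref{lem:theta-backward}, applied now to the shifted increments that enter $\kappa_h$. For $0\le v\le C_h(u)$ and $s\in[-C_h(v),0]$, both $v+s$ and $u+s$ lie in $[0,T]$, since $v+s\ge v-C_h(v)\ge0$. We may therefore write $E_h(u+s)-E_h(v+s)$ using the error identity \eqref{eq:error-identity-backward} on $[v+s,u+s]$. This splits the increment into a drift part $D_b(u+s)-D_b(v+s)$ and a diffusion part $I_{\sigma,h}(u+s)-I_{\sigma,h}(v+s)$. Hence
\[
\kappa_h(u)\le \int_0^{C_h(u)}\int_{-C_h(v)}^0\frac{|D_b(u+s)-D_b(v+s)|}{(u-v)^{\beta+1}}\,ds\,dv+\int_0^{C_h(u)}\int_{-C_h(v)}^0\frac{|I_{\sigma,h}(u+s)-I_{\sigma,h}(v+s)|}{(u-v)^{\beta+1}}\,ds\,dv .
\]

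The diffusion part is exactly the shifted-memory integral \eqref{eq:diffusion-kappa-transfer}. By Lemma~\ref{lem:diffusion-transfer-backward} it is bounded by
\[
C(\omega)\Bigl[h^{H-\rho-\beta}+\int_0^u\mathcal E_h(s)(u-s)^{-\beta}ds+\int_0^u Z_h(s)(u-s)^{-2\beta}ds\Bigr],
\]
and $\mathcal E_h=Z_h+\theta_h+\kappa_h$ gives the desired shape directly. If the $\Delta^Y_h$ contribution appears explicitly inside that transfer estimate, it is handled through \eqref{eq:delay-increment-error-backward}, as in Lemma~\ref{lem:theta-backward}.

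For the drift part, the pointwise bound behind \eqref{eq:deriva-error-backward} gives a bound on the integrand. For the fixed shift $s$, the integrand is bounded by $C\,Z_h(w)+C(\omega)h^{H-\rho}$ for $w\in[v+s,u+s]$. The required ingredients are:
\begin{itemize}
\item Hypothesis~1;
\item \eqref{eq:delay-point-error-backward};
\item the Hölder bounds of Theorem~\ref{thm:apriori-backward} and Corollary~\ref{cor:3.3.1-back}, which control the $C_h^+$ displacement;
\item monotonicity of $Z_h$, so that $Z_h(w)\le Z_h(w-s)$ because $s\le 0$.
\end{itemize}
Changing variables $w'=w-s\in[v,u]$ then bounds $|D_b(u+s)-D_b(v+s)|$ by $\int_v^u (C Z_h(w')+C(\omega)h^{H-\rho})\,dw'$, uniformly in $s$. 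The $s$-integral contributes at most a factor $r$. Fubini, as in \eqref{eq:theta-deriva-backward}, then yields $C\int_0^u Z_h(w)(u-w)^{-\beta}dw+C(\omega)h^{H-\rho}$. Finally, $h^{H-\rho}\le h^{H-\rho-\beta}$ and summing the two parts give \eqref{eq:kappa-est-backward}.

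The main obstacle is justifying \eqref{eq:diffusion-kappa-transfer} with the same upper bound as the unshifted estimate. The shifted window $[v+s,u+s]$ is not aligned with the mesh, so the same-cell/different-cell split for $C_h$ must be redone. The bound must also be expressed through unshifted quantities. I would handle this by invoking the appendix argument of Lemma~\ref{lem:diffusion-transfer-backward}, shifting variables $w\mapsto w-s$ and using monotonicity of $Z_h$. The shifted fractional increments of $E_h$ that arise are, after integration in $s$, precisely $\kappa_h$ evaluated at intermediate times, which is what allows the bound to close.
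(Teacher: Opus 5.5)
Your proposal is correct and follows the paper's proof essentially step for step. You use the same split $\kappa_h\le D_{b,\kappa}+D_{\sigma,\kappa}$. You bound the drift part via Hypothesis~1, \eqref{eq:delay-point-error-backward}, the a priori Hölder bounds and a change of integration order; your monotonicity-of-$Z_h$ shift $w\mapsto w-s$ just makes explicit what the paper leaves implicit. You take the diffusion part directly from \eqref{eq:diffusion-kappa-transfer} followed by \eqref{eq:delay-increment-error-backward}, as the paper does.

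The obstacle you flag is not one for this lemma. Estimate \eqref{eq:diffusion-kappa-transfer} is already part of the statement of Lemma~\ref{lem:diffusion-transfer-backward}, which the paper simply invokes here.
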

\begin{proof}
For $0\le v\le C_h(u)$ and $-C_h(v)\le s\le0$ we have $v+s\ge0$. We
apply \eqref{eq:error-identity-backward} to the increment
\[
E_h(u+s)-E_h(v+s) = \big[D_b(u+s)-D_b(v+s)\big] + \big[D_\sigma(u+s)-D_\sigma(v+s)\big].
\] and then integrate with the weight
$(u-v)^{-\beta-1}$ appearing in \eqref{kappa-backward}.

Applying the triangle inequality, $\kappa_h(u) \le D_{b,\kappa}(u) + D_{\sigma,\kappa}(u)$, where
\begin{eqnarray}\label{Dbkappa}
D_{b,\kappa}(u) &:=& \int_0^{C_h(u)}\int_{-C_h(v)}^{0}
\frac{|D_b(u+s)-D_b(v+s)|}{(u-v)^{\beta+1}}dsdv,
\nonumber  \\ 
D_{\sigma,\kappa}(u) &:=& \int_0^{C_h(u)}\int_{-C_h(v)}^{0}
\frac{|D_\sigma(u+s)-D_\sigma(v+s)|}{(u-v)^{\beta+1}}dsdv.
\end{eqnarray}

The drift contribution is controlled, as in
\eqref{eq:theta-deriva-backward}, using Hypothesis~1,
\eqref{eq:delay-point-error-backward},
Theorem~\ref{thm:apriori-backward}, and
Corollary~\ref{cor:3.3.1-back}. A change in the order of integration gives
\begin{equation}
\label{eq:kappa-deriva-backward}
D_{b,\kappa}(u)
\le
C\int_0^u Z_h(s)(u-s)^{-\beta}\,ds
+
C(\omega)h^{H-\rho}.
\end{equation}

For the diffusion contribution, use the shifted estimate
\eqref{eq:diffusion-kappa-transfer} of
Lemma~\ref{lem:diffusion-transfer-backward}, followed by
\eqref{eq:delay-increment-error-backward}. We obtain
\begin{align*}
C(\omega)\Bigg[
 h^{H-\rho-\beta}
 +\int_0^u
 (Z_h(s)+\theta_h(s)+\kappa_h(s))(u-s)^{-\beta}\,ds
 +\int_0^u Z_h(s)(u-s)^{-2\beta}\,ds
\Bigg].
\end{align*}
Finally, $h^{H-\rho}\le h^{H-\rho-\beta}$ for $0<h\le1$, and we obtain
\eqref{eq:kappa-est-backward}.
\end{proof}
Lemmas 4.15-4.19 provide, respectively, the control of the delay functional, the
diffusion contribution, and the drift and memory contributions to the pointwise
error. We now combine these estimates into the closed integral inequality for
$\mathcal E_h$ announced at the beginning of this subsection, and close it by means
of the fractional Gronwall inequality of Dixon and McKee~\cite{dixonmckee1986}.
\begin{proof}[\textbf{Proof of Theorem~\ref{thm:convergencia-backward}}]
Let $\Omega_\rho$ be the intersection of the probability-one set on
which Theorem~\ref{thm:4.1.1} gives the $H-\rho$ Hölder regularity of
the exact solution and the set $\Omega_\rho^{\rm ap}$ of
Theorem~\ref{thm:apriori-backward}. Fix $\omega\in\Omega_\rho$.
All constants in the estimates below are then finite pathwise. We take
$h_\rho(\omega)$ smaller, if necessary, than the random threshold of
Theorem~\ref{thm:apriori-backward} and than the deterministic
thresholds required for the implicit step and the right-node estimate.
The constants and thresholds constructed in the proof can be chosen as
measurable functions of the Hölder seminorms entering the preceding
pathwise estimates.

Adding estimates \eqref{eq:Zh-backward}, \eqref{eq:theta-est-backward}
and \eqref{eq:kappa-est-backward}, and using definition
\eqref{Eh-functional-backward}, we obtain, for every $t\in[0,T]$,
\begin{align}
\label{eq:error-gronwall-backward}
\mathcal E_h(t)
\le C(\omega)\Bigg[{}&h^{H-\rho-\beta}
 +\int_0^t \bigl(1+s^{-\beta}\bigr)\mathcal E_h(s)\,ds\nonumber\\
&+\int_0^t \mathcal E_h(s)(t-s)^{-\beta}\,ds
 +\int_0^t \mathcal E_h(s)(t-s)^{-2\beta}\,ds
\Bigg].
\end{align}
Indeed, $Z_h\le\mathcal E_h$, and each integral appearing in
\eqref{eq:Zh-backward} is bounded by the corresponding integral of
$\mathcal E_h$. Likewise, the estimates for $\theta_h$ and $\kappa_h$
are incorporated directly into the last two terms of
\eqref{eq:error-gronwall-backward}.

Since $\beta<1/2$, the functions
$u\mapsto u^{-\beta}$ and $u\mapsto u^{-2\beta}$ belong to
$L^1(0,T)$. Thus the two convolution kernels in
\eqref{eq:error-gronwall-backward} are weakly singular and integrable,
and the additional coefficient $1+s^{-\beta}$ is integrable on
$(0,T)$. The nonnegative function $\mathcal E_h$ therefore satisfies
the hypotheses of the continuous weakly singular Gronwall inequality
of Dixon and McKee~\cite{dixonmckee1986}, after combining the regular
integral term with the two integrable Volterra kernels. Applying that
result to \eqref{eq:error-gronwall-backward} gives, for each fixed
$\omega$ in the full-probability set on which the preceding pathwise
estimates hold and for $0<h<h_\rho(\omega)$,
\begin{equation}
\label{eq:error-final-backward}
\sup_{0\le t\le T}\mathcal E_h(t)
\le C_\rho(\omega)h^{H-\rho-\beta},
\end{equation}
where $C_\rho(\omega)$ is finite and independent of $h$.

Finally, by \eqref{Zh-cont-backward},
\eqref{Eh-functional-backward}, and the fact that
$X(t)=\widetilde X_h(t)=\phi(t)$ for $t\in[-r,0]$,
\[
\sup_{-r\le t\le T}|X(t)-\widetilde X_h(t)|
=Z_h(T)
\le \mathcal E_h(T)
\le C_\rho(\omega)h^{H-\rho-\beta}.
\]
The nodal estimate follows immediately by restricting the supremum to
the mesh points. This proves the result.
\end{proof}

\begin{corollary}
\label{cor:2H-1}
For every $\eta\in(0,2H-1)$ one can choose
$\beta\in(1-H,1/2)$ and $\rho\in(0,H-\beta)$ so that
\[
H-\rho-\beta>2H-1-\eta.
\]
Consequently, on a set of probability one, for every $\omega$ there
exist $h_\eta(\omega)>0$ and $C_\eta(\omega)<\infty$ such that, for
$0<h<h_\eta(\omega)$,
\[
\sup_{-r\le t\le T}\big|X(t)-\widetilde X_h(t)\big|
\le C_\eta(\omega)h^{2H-1-\eta}.
\]
Thus, the auxiliary interpolation of the backward-Euler scheme
converges uniformly with any order strictly smaller than $2H-1$.
\end{corollary}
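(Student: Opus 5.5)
The argument has two parts: an elementary choice of the parameters $\beta$ and $\rho$, followed by a direct application of Theorem~\ref{thm:convergencia-backward}.

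\emph{Choice of parameters.} Fix $\eta\in(0,2H-1)$. I would take $\beta$ just above its lower endpoint and $\rho$ small. Set
\[
\beta:=1-H+\frac{\eta}{3},\qquad \rho:=\frac{\eta}{3}.
\]
This $\beta$ lies in $(1-H,1/2)$, because $\eta/3<(2H-1)/3<H-1/2$ gives $\beta<1/2$. Moreover
\[
H-\beta=2H-1-\frac{\eta}{3}>\frac{\eta}{3}=\rho,
\]
since $\eta<2H-1$ implies $2H-1-\eta/3>2\eta/3>\eta/3$. Hence $\rho\in(0,H-\beta)$. Finally,
\[
H-\rho-\beta=2H-1-\frac{2\eta}{3}>2H-1-\eta,
\]
which is the required strict inequality.

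\emph{Application of the theorem.} For this pair $(\beta,\rho)$, Theorem~\ref{thm:convergencia-backward} gives a set $\Omega_\rho$ of full probability. For every $\omega\in\Omega_\rho$ it also gives $h_\rho(\omega)>0$ and $C_\rho(\omega)<\infty$ such that
\[
\sup_{-r\le t\le T}|X(t)-\widetilde X_h(t)|\le C_\rho(\omega)h^{H-\rho-\beta}
\qquad\text{for } 0<h<h_\rho(\omega).
\]
Take $h_\eta(\omega):=\min\{h_\rho(\omega),1\}$. For $0<h\le 1$ the exponent comparison $H-\rho-\beta>2H-1-\eta$ gives $h^{H-\rho-\beta}\le h^{2H-1-\eta}$. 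Therefore the stated bound holds with $C_\eta:=C_\rho$ on the full-probability set $\Omega_\rho$.

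\emph{Final sentence.} Let $\gamma<2H-1$ be a target order. If $\gamma\le0$ there is nothing to prove. Otherwise, apply the result above with $\eta:=2H-1-\gamma\in(0,2H-1)$.

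The only step needing care is confirming that the parameter window is nonempty, that is, checking $\beta<1/2$ and $\rho<H-\beta$ simultaneously. This is exactly where $\eta<2H-1$ is used. Everything else follows directly from the theorem.
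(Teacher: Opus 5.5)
Your proposal is correct and follows essentially the same route as the paper: pick $\beta$ just above $1-H$ and $\rho$ small so that $H-\rho-\beta>2H-1-\eta$, then apply Theorem~\ref{thm:convergencia-backward} together with $h^{H-\rho-\beta}\le h^{2H-1-\eta}$ for $0<h\le 1$. The only differences are that you give the explicit choice $\beta=1-H+\eta/3$, $\rho=\eta/3$ where the paper uses the conditions $\beta-(1-H)<\eta/2$ and $\rho<\min\{H-\beta,\eta/2\}$, and that you make the restriction $h_\eta=\min\{h_\rho,1\}$ explicit where the paper leaves it implicit.
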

\begin{proof}
Given $\eta\in(0,2H-1)$, choose
$\beta\in(1-H,1/2)$ so that
\[
0<\beta-(1-H)<\frac{\eta}{2},
\]
and then $\rho>0$ such that
\[
0<\rho<\min\left\{H-\beta,\frac{\eta}{2}\right\}.
\]
Then
\[
H-\rho-\beta
=2H-1-\bigl(\beta-(1-H)\bigr)-\rho
>2H-1-\eta.
\]
The result follows from Theorem~\ref{thm:convergencia-backward},
since for $0<h<1$,
\[
h^{H-\rho-\beta}\le h^{2H-1-\eta}.
\]
\end{proof}

\begin{remark}
Corollary~\ref{cor:2H-1} does not establish a bound with exponent
exactly equal to $2H-1$. The strict restrictions
$\beta>1-H$ and $\rho>0$ allow any exponent below $2H-1$ to be
attained, while the limiting value is reached only as
$\beta\downarrow1-H$ and $\rho\downarrow0$.
\end{remark}

\section{Numerical Study}
\label{sec:5}

We consider an SFDE example satisfying the hypotheses of the analysis
and compare the backward-Euler and explicit Euler approximations on the
same realization of the noise. The purpose of the study is to
illustrate the numerical behaviour of both schemes; the empirical rates
are computed with respect to a reference solution on a fine mesh and
should not be interpreted as estimates of an exact asymptotic rate.

\subsection{Model Considered}

To illustrate scheme \eqref{Backward}, we consider the following
scalar SFDE with distributed delay
\begin{equation}\label{eq:example-sfde}
dX(t) = \big[-a\,X(t) + c\,Y(t)\big]\,dt + \sigma\big(X(t)\big)\,dB^H(t),
\qquad t\in[0,T], \qquad X(t)=x_0,\ t\in[-r,0],
\tag{E.1}
\end{equation}
with memory functional
\[
Y(t) = \kappa\int_{-r}^0 X(t+s)\,ds = \kappa\int_{t-r}^{t}X(u)\,du,
\]
that is, in the notation of \eqref{eq:X}-\eqref{eq:Y},
\[
b(t,x,y) = -a x + c y, \qquad \sigma(t,x,y) = \sigma_0+\sigma_1 x,
\qquad K(t,s,x) = \kappa x, \qquad \phi(t)\equiv x_0.
\]
These are the simplest choices that, while remaining fully explicit,
genuinely satisfy Hypotheses~$\phi$, 1, 2 and 3: $b$ and $\sigma$ are
affine, hence satisfying the Lipschitz and linear-growth conditions. In
the functional formulation of Hypothesis~2 one can identify
$\sigma(t,\xi)=\sigma_0+\sigma_1\xi_1(0)$, whose Fréchet derivative is
the linear functional $\eta\mapsto\sigma_1\eta_1(0)$. Moreover, for
$K(t,s,x)=\kappa x$ we have
$\nabla_{(s,x)}K(t,s,x)=(0,\kappa)$; hence this gradient is uniformly
bounded and its Lipschitz constant is zero. The kernel does not depend
on $(t,s)$ (so $K_1=0$), and $\phi$ is constant (Hölder with any
exponent). We use the numerical values
\[
H=0.7,\quad r=T=1,\quad a=1,\quad c=0.3,\quad \kappa=0.5,\quad
\sigma_0=0.25,\quad \sigma_1=0.15,\quad x_0=1.
\]
The diffusion is taken affine in $x$ and non-constant. If $\sigma$ were
constant, the stochastic integral would be reproduced exactly on each
mesh interval, $\int_{t_n}^{t_{n+1}}\sigma_0\,dB^H=\sigma_0\Delta
B^H(t_n)$, and this source of discretization error would disappear. The
choice $\sigma_1\neq0$ keeps a non-trivial stochastic contribution in
the numerical comparison.

Since $b$ is linear in $x$, the implicit step \eqref{Backward} has a
closed-form solution, with no need to iterate a fixed point:
\begin{equation}\label{eq:example-backward}
\widetilde X_h(t_{n+1}) = \frac{\widetilde X_h(t_n) + c\,h\,\widetilde
Y(t_{n+1}) + \sigma(\widetilde X_h(t_n))\,\Delta B^H(t_n)}{1+a\,h}.
\tag{E.2}
\end{equation}
Note that $\widetilde Y(t_{n+1})$, defined in \eqref{Ytilde}, only uses
values of $\widetilde X_h$ already computed
(Proposition~\ref{prop:existencia-implicito} of
Section~\ref{sec:convergencia-backward-4.6}), so
\eqref{eq:example-backward} is directly computable. In this example,
the contraction constant of the implicit map with respect to
$\widetilde X_h(t_{n+1})$ is $ha$; hence it suffices to impose
$ha<1$, a condition satisfied by all the step sizes used.

\subsection{Simulation Design}

\begin{itemize}
\item \textbf{Fractional Brownian motion.} A path of $B^H$ is
simulated on a fine mesh of $N_{\text{fine}}=2048$ points via the
Cholesky factorization of the covariance matrix
$R(s,t)=\tfrac12(|s|^{2H}+|t|^{2H}-|t-s|^{2H})$. For each coarser mesh
(with $N$ a divisor of $N_{\text{fine}}$), the same fine path is
\emph{subsampled} at the corresponding points, so that all schemes
being compared use exactly the same noise.
\item \textbf{Schemes.} We implement the explicit Euler scheme of
\cite{abreu2026euler} and the backward-Euler scheme
\eqref{eq:example-backward}, both with the discrete delay functional
$\widetilde Y$.
\item \textbf{Reference solution.} Since no closed-form exact solution
is available for \eqref{eq:example-sfde}, we use as a numerical
reference the backward-Euler approximation computed on the fine mesh
$N_{\text{fine}}=2048$. This choice should be kept in mind when
interpreting the comparison between the two schemes.
\item \textbf{Convergence study.} For $N\in\{8,16,32,64,128\}$ we
compute the maximum error over the mesh points,
$\max_n|X_{\text{ref}}(t_n)-\widetilde X_h(t_n)|$, averaged over
$M=24$ independent realizations of $B^H$ (to reduce the Monte Carlo
noise of a single path), and we estimate the empirical order of
convergence as the slope of the log-log regression of the mean error
against $h$.
\end{itemize}

\subsection{Numerical Results}

Table~\ref{tab:errores} and Figure~\ref{fig:resultados} show the
results.

\begin{table}[ht]
\centering
\begin{tabular}{@{}rrrr@{}}
\toprule
$N$ & $h$ & mean error backward-Euler & mean error explicit Euler \\
\midrule
8   & 0.12500 & $1.766\times10^{-2}$ & $2.817\times10^{-2}$ \\
16  & 0.06250 & $8.127\times10^{-3}$ & $1.473\times10^{-2}$ \\
32  & 0.03125 & $3.510\times10^{-3}$ & $8.145\times10^{-3}$ \\
64  & 0.01562 & $1.477\times10^{-3}$ & $4.619\times10^{-3}$ \\
128 & 0.00781 & $6.894\times10^{-4}$ & $2.746\times10^{-3}$ \\
\bottomrule
\end{tabular}
\caption{Mean Monte Carlo error ($M=24$ realizations), measured as the
maximum over the mesh points, as a function of $N$ ($h=T/N$).}
\label{tab:errores}
\end{table}

The log-log regression slopes are approximately
\[
\text{backward-Euler}:\ 1.18,\qquad
\text{explicit Euler}:\ 0.84.
\]
For $\rho=0.02$ and $\beta=0.35$, Theorem~\ref{thm:convergencia-backward}
guarantees the exponent $H-\rho-\beta=0.33$, while the limiting value of
Corollary~\ref{cor:2H-1} is $2H-1=0.4$. The empirical slopes over the
mesh range considered are higher than these exponents. This does not
contradict the theoretical result: the theorem provides a guaranteed
rate under the general hypotheses and does not assert that
$H-\rho-\beta$ is the exact asymptotic rate for every choice of
coefficients.

The quantitative interpretation of the slopes should be made with
caution. The exact solution of \eqref{eq:example-sfde} is not
available, and the numerical reference is a backward-Euler
approximation computed on a finer mesh. Hence, the comparison may
partially favour the scheme from the same family, and the observed
slopes may reflect both pre-asymptotic effects and the error of the
reference itself. The results nevertheless show a regular decay of the
error under mesh refinement for both methods. Over the range
considered, the backward-Euler scheme exhibits a smaller discrepancy
with respect to the numerical reference than the explicit Euler scheme.

\begin{figure}[ht]
\centering
\includegraphics[width=\textwidth]{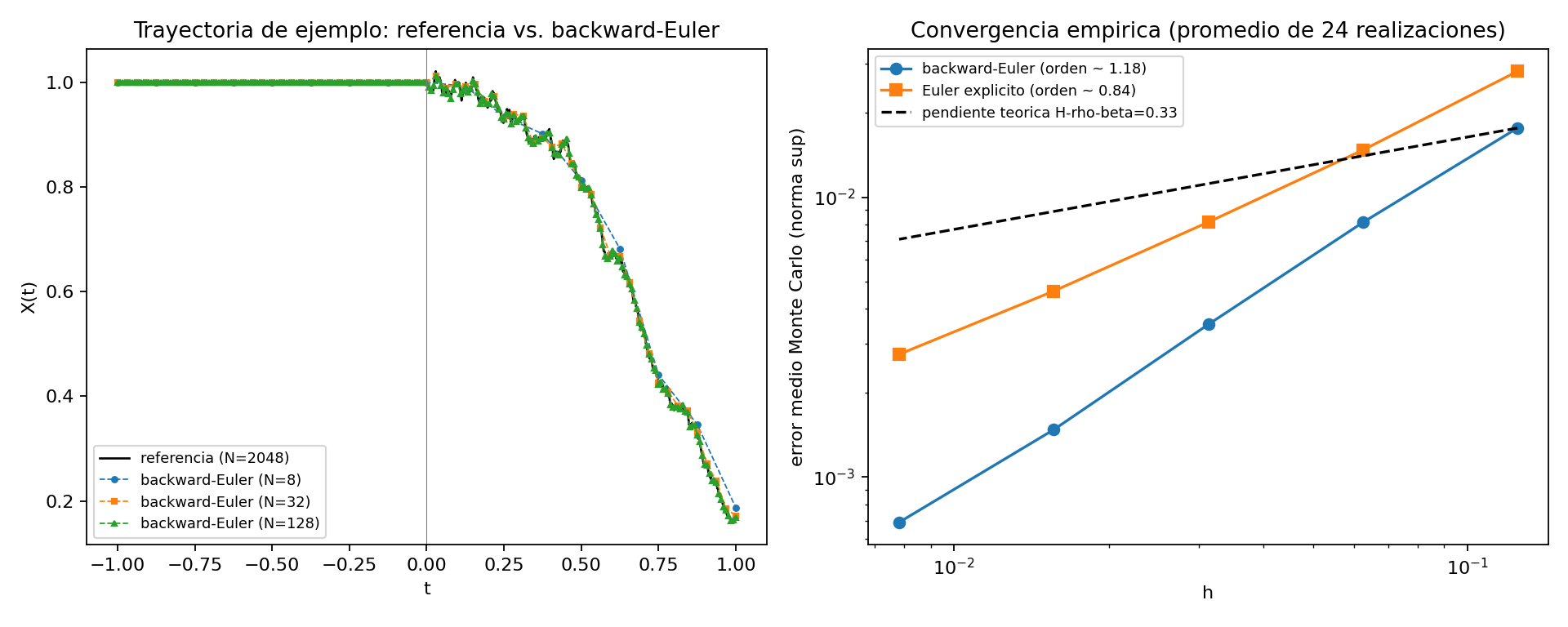}
\caption{Left: reference path ($N=2048$) together with the
backward-Euler approximation for $N=8,32,128$. Right: empirical
convergence in the sup norm (average over 24 realizations) for both
schemes; the guaranteed rate $H-\rho-\beta$ is included for
reference.}
\label{fig:resultados}
\end{figure}
\FloatBarrier

\appendix
\section{Technical Details for the Error Transfer}
\label{app:transfer-details}

This appendix records the singular integrations used in
Lemmas~\ref{lem:delay-error-backward} and
\ref{lem:diffusion-transfer-backward}. Its purpose is to make explicit
which estimates depend only on the left-point projection $C_h$ and are
therefore unchanged by the backward evaluation of the drift.

\subsection{Fractional integral estimate}
For a path $f$ for which the right-hand side is finite, the generalized
Lebesgue-Stieltjes estimate used throughout the paper has the form
\begin{equation}
\label{eq:appendix-zahle-bound}
\left|\int_a^b f(s)\,dB^H(s)\right|
\le C(\omega)\left\{
\int_a^b\frac{|f(s)|}{(s-a)^\beta}\,ds
+\int_a^b\int_a^s
\frac{|f(s)-f(v)|}{(s-v)^{\beta+1}}\,dv\,ds
\right\}.
\end{equation}
The constant is finite on every trajectory for which
$[B^H]_{H-\rho;[0,T]}<\infty$, because
$1-H<\beta<H-\rho$.

\subsection{Seven-term decomposition of the delay increment}
\label{app:delay-seven-terms}
For $0\le v<u\le T$, the second-increment estimate for $K$ applied in
the proof of Lemma~\ref{lem:delay-error-backward} gives
\[
\int_0^u\frac{\Delta_h^Y(v,u)}{(u-v)^{\beta+1}}\,dv
\le C\sum_{j=1}^7M_j(u),
\]
with
\begin{align*}
M_1(u)&=\int_0^u\!\int_{-r}^0
 |s-C_h(s)|(u-v)^{-\beta}\,ds\,dv,\\
M_2(u)&=\int_0^u\!\int_{-r}^0
 |X(v+s)-\widetilde X_h(v+C_h(s))|(u-v)^{-\beta}\,ds\,dv,\\
M_3(u)&=\int_0^u\!\int_{-r}^0
 |X(u+s)-\widetilde X_h(u+C_h(s))|(u-v)^{-\beta}\,ds\,dv,\\
M_4(u)&=\int_0^u\!\int_{-r}^0
 \Upsilon_h(u,v,s)(u-v)^{-\beta-1}\,ds\,dv,\\
M_5(u)&=\int_0^u\!\int_{-r}^0
 |X(u+s)-X(v+s)|\,|s-C_h(s)|
 (u-v)^{-\beta-1}\,ds\,dv,\\
M_6(u)&=\int_0^u\!\int_{-r}^0
 |X(u+s)-X(v+s)|
 |X(v+s)-\widetilde X_h(v+C_h(s))|
 (u-v)^{-\beta-1}\,ds\,dv,\\
M_7(u)&=\int_0^u\!\int_{-r}^0
 |X(u+s)-X(v+s)|
 |X(u+s)-\widetilde X_h(u+C_h(s))|
 (u-v)^{-\beta-1}\,ds\,dv,
\end{align*}
where
\[
\Upsilon_h(u,v,s)=
|X(u+s)-X(v+s)-\widetilde X_h(u+C_h(s))
 +\widetilde X_h(v+C_h(s))|.
\]
This is an algebraic consequence of Lemma~\ref{lem:K-second-increment};
no estimate of the backward drift is used at this stage.

We record the integrations needed in the main proof. Since
$|s-C_h(s)|\le h$,
\[
\int_0^tM_1(u)\,du\le Ch,
\qquad
\int_0^tM_5(u)\,du\le C(\omega)h.
\]
For $M_2$ and $M_3$, insert
\[
|X(q+s)-\widetilde X_h(q+C_h(s))|
\le |E_h(q+s)|
 +|\widetilde X_h(q+s)-\widetilde X_h(q+C_h(s))|,
\]
with $q=v$ or $q=u$. The first term vanishes when $q+s\le0$ and is
bounded by $Z_h(q)$ otherwise; the second is at most
$C(\omega)h^{H-\rho}$. Since $\beta<1$,
\[
\int_0^t\{M_2(u)+M_3(u)\}\,du
\le C\int_0^tZ_h(u)\,du+C(\omega)h^{H-\rho}.
\]
Using additionally the $H-\rho$ Hölder regularity of $X$ in the
product terms gives in the same way
\[
\int_0^t\{M_6(u)+M_7(u)\}\,du
\le C\int_0^tZ_h(u)\,du+C(\omega)h^{H-\rho}.
\]

It remains to control $M_4$. Write $M_4(u)=\sum_{j=1}^6A_j(u)$, where
\begin{align*}
A_1(u)&=\int_0^{C_h(u)}\int_{-r}^{-u}
 \Upsilon_h(u,v,s)(u-v)^{-\beta-1}\,ds\,dv,\\
A_2(u)&=\int_{C_h(u)}^{u}\int_{-r}^{-u}
 \Upsilon_h(u,v,s)(u-v)^{-\beta-1}\,ds\,dv,\\
A_3(u)&=\int_0^{C_h(u)}\int_{-u}^{-C_h(u)}
 \Upsilon_h(u,v,s)(u-v)^{-\beta-1}\,ds\,dv,\\
A_4(u)&=\int_{C_h(u)}^{u}\int_{-u}^{-C_h(u)}
 \Upsilon_h(u,v,s)(u-v)^{-\beta-1}\,ds\,dv,\\
A_5(u)&=\int_0^{C_h(u)}\int_{-C_h(u)}^{0}
 \Upsilon_h(u,v,s)(u-v)^{-\beta-1}\,ds\,dv,\\
A_6(u)&=\int_{C_h(u)}^{u}\int_{-C_h(u)}^{0}
 \Upsilon_h(u,v,s)(u-v)^{-\beta-1}\,ds\,dv.
\end{align*}
The first four regions contain only initial-history terms or a crossing
of the origin. Hölder continuity of $\phi$, of $X$, and of
$\widetilde X_h$, together with $u-C_h(u)\le h$, gives
\begin{align}
\label{eq:A1-A4-delay}
\int_0^t\{A_1(u)+A_2(u)+A_3(u)+A_4(u)\}\,du
\le C(\omega)h^{H-\rho-\beta}.
\end{align}
For completeness, the potentially most singular factors are
$h^{H-\rho}(u-C_h(u))^{-\beta}$ on $A_1$ and
$(u-C_h(u))^{H-\rho-\beta}$ on the short $v$-intervals; their
integrals are of order $h^{H-\rho-\beta}$ because
$H-\rho-\beta>0$.

For $A_5$, split the memory interval into
$[-C_h(u),-v]$, $[-v,-C_h(v)]$, and $[-C_h(v),0]$.
On the first part the error is bounded by
$Z_h(u)+C(\omega)h^{H-\rho}$; on the second part all four arguments
are within distance $u-v$ of the origin and Hölder regularity applies;
on the third part,
\begin{align*}
\Upsilon_h(u,v,s)
&\le \Delta_h^X(v+s,u+s)\\
&\quad+|\widetilde X_h(u+s)-\widetilde X_h(u+C_h(s))|\\
&\quad+|\widetilde X_h(v+s)-\widetilde X_h(v+C_h(s))|.
\end{align*}
The first term produces $\kappa_h(u)$ by its definition, and the last
two are bounded by $C(\omega)h^{H-\rho}$. Consequently,
\begin{equation}
\label{eq:A5-delay}
\int_0^tA_5(u)\,du
\le C\int_0^tZ_h(u)\,du
 +C\int_0^t\kappa_h(u)\,du
 +C(\omega)h^{H-\rho-\beta}.
\end{equation}
Finally, on $A_6$ both $u+s$ and $v+s$ are nonnegative, and the
$H-\rho$ Hölder estimates give
$\Upsilon_h(u,v,s)\le C(\omega)(u-v)^{H-\rho}$; the $v$-interval has
length at most $h$, hence
\begin{equation}
\label{eq:A6-delay}
\int_0^tA_6(u)\,du\le C(\omega)h^{H-\rho-\beta}.
\end{equation}
Combining \eqref{eq:A1-A4-delay}-\eqref{eq:A6-delay} proves
\eqref{eq:delay-M4-bound}, and therefore the delay increment estimate
used in the main convergence proof.

\subsection{Diffusion transfer through the left-point projection}
With $G_h$ as in Lemma~\ref{lem:diffusion-transfer-backward}, write
\[
G_h=G_h^{(1)}+G_h^{(2)}+G_h^{(3)},
\]
where
\begin{align*}
G_h^{(1)}(s)&=\sigma(s,X(s),Y(s))
 -\sigma(s,\widetilde X_h(s),\widetilde Y(s)),\\
G_h^{(2)}(s)&=\sigma(s,\widetilde X_h(s),\widetilde Y(s))
 -\sigma(C_h(s),\widetilde X_h(s),\widetilde Y(s)),\\
G_h^{(3)}(s)&=\sigma(C_h(s),\widetilde X_h(s),\widetilde Y(s))
 -\sigma(C_h(s),\widetilde X_h(C_h(s)),
                 \widetilde Y(C_h(s))).
\end{align*}
This decomposition is useful because it separates the true state error,
the time-projection error, and the within-cell interpolation error. In
particular, it contains only $C_h$ and is identical for the explicit
and backward schemes.

For the first part, Hypothesis~1 and
\eqref{eq:delay-point-error-backward} give
\[
|G_h^{(1)}(s)|\le C Z_h(s)+C(\omega)h^{H-\rho}.
\]
For $0\le v<s$, Lemma~\ref{lem:sigma-second-increment}, whose
finite-dimensional assumptions follow from the functional
Hypothesis~2 by Lemma~\ref{lem:sigma-restriction}, is applied to the
four pairs
\[
(X(s),Y(s)),\quad(\widetilde X_h(s),\widetilde Y(s)),\quad
(X(v),Y(v)),\quad(\widetilde X_h(v),\widetilde Y(v)).
\]
It yields, after using the pathwise a priori bounds,
terms of the form
\[
\Delta_h^X(v,s)+\Delta_h^Y(v,s),
\qquad
\{Z_h(s)+Z_h(v)+h^{H-\rho}\}|s-v|^{H-\rho},
\]
plus products with a bounded factor. Substitution in
\eqref{eq:appendix-zahle-bound} produces the $Z_h$, $\theta_h$ and
delay-increment contributions displayed in
\eqref{eq:diffusion-sup-transfer}.

For $G_h^{(2)}$, the time-Lipschitz bound in Hypothesis~1 gives
$|G_h^{(2)}(s)|\le L_1h$. In the fractional increment integral we split
$v\in[0,s]$ into $[0,C_h(s)]$ and $[C_h(s),s]$. On the first region
both time projection errors are individually of order $h$; on the
second region the integration interval has length at most $h$ and the
Hölder bounds for $\widetilde X_h$ and $\widetilde Y$ are used. Thus
\begin{equation}
\label{eq:G2-fractional}
\|G_h^{(2)}\|_{\beta,1;[0,t]}
\le C(\omega)\{h+h^{1-\beta}+h^{H-\rho-\beta}\}
\le C(\omega)h^{H-\rho-\beta},
\end{equation}
where $\|\cdot\|_{\beta,1}$ denotes the two terms on the right-hand
side of \eqref{eq:appendix-zahle-bound}.

For $G_h^{(3)}$, Hypothesis~1, the $H-\rho$ Hölder regularity of the
interpolation and the analogous estimate for $\widetilde Y$ give
$|G_h^{(3)}(s)|\le C(\omega)h^{H-\rho}$. The same-cell/different-cell
split as above yields
\begin{equation}
\label{eq:G3-fractional}
\|G_h^{(3)}\|_{\beta,1;[0,t]}
\le C(\omega)h^{H-\rho-\beta}.
\end{equation}
Combining the three pieces with
\eqref{eq:appendix-zahle-bound} proves
\eqref{eq:diffusion-sup-transfer}.

For the increment estimate on $[v,u]$, apply the same three-part
decomposition with the lower endpoint $v$ in
\eqref{eq:appendix-zahle-bound}, and then integrate the result against
$(u-v)^{-\beta-1}dv$. Fubini's theorem gives the two Volterra kernels
\[
\int_0^u\mathcal E_h(s)(u-s)^{-\beta}\,ds,
\qquad
\int_0^u Z_h(s)(u-s)^{-2\beta}\,ds,
\]
while \eqref{eq:G2-fractional}-\eqref{eq:G3-fractional} contribute
$C(\omega)h^{H-\rho-\beta}$. This proves
\eqref{eq:diffusion-theta-transfer}. Repeating the same calculation on
the shifted intervals $[v+s,u+s]$ and integrating over
$-C_h(v)\le s\le0$ gives
\eqref{eq:diffusion-kappa-transfer}. No occurrence of $C_h^+$ is
created in any of these steps; this is the precise transfer property
used in the backward analysis.
\vspace{0.5cm}

\bibliographystyle{plain}
\bibliography{references}
\end{document}